\newcommand{\po}{\Bigg (}
\newcommand{\pf}{\Bigg )}
\newcommand{\ud}[1]{\tikzset{every picture/.style={scale=#1}}}
\newcommand{\arrowdist}{0.075}
\newcommandx{\ngone}[5][3=black, 4=0, 5=0]{
	\pgfmathsetmacro{\var}{#2+1};
	\ifthenelse{#1<6}{\pgfmathsetmacro{\diagradius}{1}}{\pgfmathsetmacro{\diagradius}{1.5}}
	\pgfmathsetmacro{\diagsmallradius}{0.075};
	\pgfmathsetmacro{\diagdegree}{#1};
	\pgfmathsetmacro{\diaglabeldist}{\diagradius/3.5};
	\pgfmathsetmacro{\diagn}{\diagdegree+1};
	\ifthenelse{\equal{#2}{}}{\draw[#3] (0+#4,0+#5) coordinate (C) circle (\diagradius);}{\draw[white] (0,0) coordinate (C) circle (\diagradius);}
	\phantom{ \draw (0+#4,0+#5)circle (\diaglabeldist/2+\diagradius);}
	\foreach \diagi in {1,...,\diagdegree}{
		\filldraw (C) + (270-\diagi*360/\diagn:\diagradius) coordinate (A\diagi) circle(\diagsmallradius);}
	\ifthenelse{\equal{#2}{}}{}{\ifthenelse{\equal{#2}{\diagdegree}}{\draw (A0) arc (270:270-\diagdegree*360/\diagn:\diagradius);}{	\draw (A\var) arc (270-\var*360/\diagn:270-(\var+\diagdegree)*360/\diagn:\diagradius);}}
	\filldraw[fill=white] (C) + (270:\diagradius) coordinate (A0)  circle(\diagsmallradius);
}
\newcommand{\arrow}[3]{
	\draw[#3, style={shorten  >= \arrowdist cm ,shorten  <= \arrowdist cm},line width=1,>=stealth,line width=0.5,->] (A#1) -- (A#2);}
\newcommand{\aretes}[2]{
	\foreach  \i/\j in #1{%
		\arrow{\i}{\j}{#2};
	}
}
\newcommand{\numbers}{
	\foreach \diagi in {1,...,\diagdegree}{
		\draw (A\diagi) ++ (270-\diagi*360/\diagn:\diaglabeldist) node{\tiny $\diagi$};
	}
}
\newcommand{\colsom}[1]{
	\foreach \i in #1{
		\ifthenelse{\equal{\i}{0}}{\filldraw[fill=yellow] (C) + (270:\diagradius) coordinate (A0)  circle(\diagsmallradius);}{	\filldraw[fill=red] (C) + (270-\i*360/\diagn:\diagradius) circle(\diagsmallradius);}
	}		
}
\newcommandx{\defDiag}[3][3=->]{\begin{tikzpicture}[baseline,x=1cm,y=1cm]
	\pgfmathsetmacro{\diagsmallradius}{0.075};
	\draw(0,0) coordinate (C) circle [radius=1.5]; %cercle du diagramme
	\draw(0,-1.5) circle [radius=\diagsmallradius]; %racine
	\draw[fill](0,-1.5)circle [radius=\diagsmallradius]; %racine
	\filldraw (C) + (270:1.5) coordinate (A0);
	\foreach \diagi in {1,...,12}{
		\filldraw (C) + (270-\diagi*360/13:1.5) coordinate (A\diagi);}
	\filldraw ($(A6)!0.5!(A7)$) coordinate (A13);
	\foreach \i/\j in #1{
		\draw[fill] (A\i) circle [radius=\diagsmallradius];
		\draw[fill] (A\j) circle [radius=\diagsmallradius];
		\draw[black, style={shorten  >=0.06  cm ,shorten  <= 0.06  cm},line width=1,>=stealth,line width=0.5,#3] (A\i) -- (A\j);
	}
	\foreach \u/\v/\nom in #2{
		\filldraw ($(A\u)!0.55!(A\v)$) coordinate (B);
		\draw[black, style={shorten  >= 0.06 cm ,shorten  <= 0.06 cm},line width=1,>=stealth,line width=0.5,#3] (B) -- (A\v);
		\draw[fill=white] (B) circle [radius=0.3]; %cercle pour D_{i}
		\draw (B) node{\tiny$\nom$}; % nom cercle pour D_{i}
	}
	\draw[fill=white](0,-1.5)circle [radius=\diagsmallradius];
	\phantom{ \draw (0,0) circle [radius=12/7];}
	\end{tikzpicture}}
\newcommand{\InsDeuxDiag}[2]{\begin{tikzpicture}[baseline,x=1cm,y=1cm]
	\pgfmathsetmacro{\diagsmallradius}{0.075};
	\pgfmathsetmacro{\radiusdiag}{2.7};
	\draw(0,0) coordinate (C) circle [radius=\radiusdiag]; %cercle du diagramme
	\draw(0,-\radiusdiag) circle [radius=\diagsmallradius]; %racine
	\draw[fill](0,-\radiusdiag)circle [radius=\diagsmallradius]; %racine
	\filldraw (C) + (270:\radiusdiag) coordinate (A0);
	\foreach \diagi in {1,...,15}{
		\filldraw (C) + (270-\diagi*360/16:\radiusdiag) coordinate (A\diagi);}
	\foreach \i/\j in #1{
		\draw[fill] (A\i) circle [radius=\diagsmallradius];
		\draw[fill] (A\j) circle [radius=\diagsmallradius];
		\draw[black, style={shorten  >=0.06  cm ,shorten  <= 0.06  cm},line width=1,>=stealth,line width=0.5,->] (A\i) -- (A\j);
	}
	\foreach \u/\v/\nom in #2{
		\filldraw ($(A\u)!0.65!(A\v)$) coordinate (B);
		\draw[black, style={shorten  >= 0.06 cm ,shorten  <= 0.06 cm},line width=1,>=stealth,line width=0.5,->] (B) -- (A\v);
		\draw[fill=white] (B) circle [radius=0.3]; %cercle pour D_{i}
		\draw (B) node{\tiny$\nom$}; % nom cercle pour D_{i}
	}
	\draw[fill=white](0,-\radiusdiag)circle [radius=\diagsmallradius];
	\phantom{ \draw (0,0) circle [radius=8*\radiusdiag/7];}
	\end{tikzpicture}}
\newcommand{\exempleintro}{
	\begin{tikzpicture}[baseline,x=1cm,y=1cm]
	\ngone{3}{}[white];
	\numbers;
	\aretes{{1/0,2/1,3/1}}{red};
	\foreach \i/\j [count = \k] in {1/0,2/1,3/1}{
		\draw ($(A\i)!0.5!(A\j)$) node{\tiny \textcolor{red}{$\k$}};
	}
	\foreach \i in {0,...,3}{
		\draw (225-\i*360/\diagn:\diagradius+\diaglabeldist) node{\tiny\textcolor{blue}{$\i$}};
	}
		\foreach \u [count = \k] in {(0,-1),(-1,0),(0,1),(1,0)}{
		\draw[blue,style={shorten  >= 0.075cm ,shorten  <= 0.075cm},line width=1,>=stealth,line width=0.5,->] \u arc (270-(\k-1)*360/\diagn:270-\k*360/\diagn:\diagradius);
	}
\end{tikzpicture}}
\newcommand{\diagramme}[8]{\begin{tikzpicture}[baseline,x=1cm,y=1cm]
	\ngone{#1}{#8};
	\ifthenelse{\equal{#2}{n}}{\numbers}{};
	\aretes{#3}{black};
	\aretes{#4}{red};
	\aretes{#5}{blue};
	\aretes{#6}{green};
	\colsom{#7};
	\end{tikzpicture}}
\newcommand{\diagnot}[3]{\begin{tikzpicture}[baseline,x=1cm,y=1cm]
	\ngone{9}{};
	\numbers;
	\aretes{#1}{black};
	\aretes{#2}{red};
	\colsom{#3};
	\foreach \i/\j [count = \k] in #2{
		%\draw coordinate (M\k) ++ ($(A\i)!0.5!(A\j)$) node{\textcolor{red}{$a_{\k}^{#3}$}};
		\draw coordinate (M\k) ++ ($(A\i)!0.5!(A\j)$) node{\tiny\textcolor{red}{$\k$}};
	}	
\end{tikzpicture}}
\newcommand{\colldiag}[1]{\begin{tikzpicture}[baseline,x=1cm,y=1cm]
		\pgfmathsetmacro{\dep}{2};
		\pgfmathsetmacro{\cradius}{1};
		\pgfmathsetmacro{\csmallradius}{0.075};
		\draw[white] (-0.15,1.15) coordinate (C1) circle (\cradius);
		\draw[white] (1.101,-0.459) coordinate (C2) circle (\cradius);
		\draw (-0.15,0.15) coordinate (U1) arc (270:-18:\cradius);
		\draw (0.15,-0.15) coordinate (U2) arc (-198:90:\cradius);
		\foreach \i in {0,...,9}{
			\ifthenelse{\i<5}{\filldraw (C1) + (270-\i*72:1) coordinate (A\i) circle(\csmallradius);}{\filldraw (C2) + (90-\i*72:1) coordinate (A\i) circle(\csmallradius);}
		}
		\ifthenelse{\equal{#1}{2}}{\pgfmathsetmacro{\dep}{0}}{};
		\ifthenelse{\equal{#1}{3}}{\pgfmathsetmacro{\dep}{4}}{};
		\foreach \i in {0,1,2,3}{
			\pgfmathparse{int{mod(\i+\dep,8)}};
			\pgfmathsetmacro{\s}{\pgfmathresult};
			\draw (A\i) ++ (270-\i*72:0.286) node{\tiny $\s$};
		}
			
		\foreach \i in {5,6,7,8}{
			\pgfmathparse{int{mod(\i+\dep-1,8)}};
			\pgfmathsetmacro{\s}{\pgfmathresult};
			\draw (A\i) ++ (90-\i*72:0.286) node{\tiny $\s$};	
		}
		%\aretes{{4/0,5/9}}{red};
		\ifthenelse{\equal{#1}{1}}{
			\aretes{{1/0,2/1,3/4,6/5,8/7,9/7}}{black};
			\filldraw[fill=red]  (A0) circle (\csmallradius);\filldraw[fill=red]  (A9) circle (\csmallradius);\filldraw[fill=white]  (A4) circle (\csmallradius);\filldraw[fill=red]  (A5) circle (\csmallradius);\filldraw[fill=yellow]  (A7) circle (\csmallradius);\draw[red, style={dashed},line width=1,>=stealth,line width=0.5,->] ($(A4)!0.5!(A5)$) -- ($(A0)!0.5!(A9)$);}{}
		\ifthenelse{\equal{#1}{2}}{
			\aretes{{1/3,2/1,3/4,6/5,7/6,8/9}}{black};
			\filldraw[fill=yellow]  (A0) circle (\csmallradius);\filldraw[fill=yellow]  (A9) circle (\csmallradius);\filldraw[fill=red]  (A4) circle (\csmallradius);\filldraw[fill=red]  (A5) circle (\csmallradius);\draw[red, style={dashed},line width=1,>=stealth,line width=0.5,->] ($(A4)!0.5!(A5)$) -- ($(A0)!0.5!(A9)$);}{}
		\ifthenelse{\equal{#1}{3}}{
			\aretes{{1/3,2/1,3/4,6/5,7/6,8/9}}{black};
			\filldraw[fill=red]  (A0) circle (\csmallradius);\filldraw[fill=red]  (A9) circle (\csmallradius);\filldraw[fill=yellow]  (A4) circle (\csmallradius);\filldraw[fill=yellow]  (A5) circle (\csmallradius);\draw[red, style={dashed},line width=1,>=stealth,line width=0.5,->] ($(A0)!0.5!(A9)$) -- ($(A4)!0.5!(A5)$);}{}
	\end{tikzpicture}
}
\newcommand{\tun}{\begin{picture}(5,0)(-2,-1)
\put(0,0){\circle*{2}}
\end{picture}}
\newcommand{\tdeux}{\begin{picture}(7,7)(0,-1)
\put(3,0){\circle*{2}}
\put(3,0){\line(0,1){5}}
\put(3,5){\circle*{2}}
\end{picture}}
\newcommand{\ttroisun}{\begin{picture}(15,8)(-5,-1)
\put(3,0){\circle*{2}}
\put(-0.65,0){$\vee$}
\put(6,7){\circle*{2}}
\put(0,7){\circle*{2}}
\end{picture}}
\newcommand{\ttroisdeux}{\begin{picture}(5,12)(-2,-1)
\put(0,0){\circle*{2}}
\put(0,0){\line(0,1){5}}
\put(0,5){\circle*{2}}
\put(0,5){\line(0,1){5}}
\put(0,10){\circle*{2}}
\end{picture}}
\newcommand{\tquatreun}{\begin{picture}(15,12)(-5,-1)
\put(3,0){\circle*{2}}
\put(-0.65,0){$\vee$}
\put(6,7){\circle*{2}}
\put(0,7){\circle*{2}}
\put(3,7){\circle*{2}}
\put(3,0){\line(0,1){7}}
\end{picture}}
\newcommand{\tquatretrois}{\begin{picture}(15,18)(-5,-1)
\put(3,0){\circle*{2}}
\put(-0.65,0){$\vee$}
\put(6,7){\circle*{2}}
\put(0,7){\circle*{2}}
\put(6,14){\circle*{2}}
\put(6,7){\line(0,1){7}}
\end{picture}}
\newcommand{\tquatrequatre}{\begin{picture}(15,18)(-5,-1)
\put(3,5){\circle*{2}}
\put(-0.65,5){$\vee$}
\put(6,12){\circle*{2}}
\put(0,12){\circle*{2}}
\put(3,0){\circle*{2}}
\put(3,0){\line(0,1){5}}
\end{picture}}
\newcommand{\tcinqcinq}{\begin{picture}(15,19)(-5,-1)
\put(3,0){\circle*{2}}
\put(-0.65,0){$\vee$}
\put(6,7){\circle*{2}}
\put(0,7){\circle*{2}}
\put(6,14){\circle*{2}}
\put(6,7){\line(0,1){7}}
\put(0,14){\circle*{2}}
\put(0,7){\line(0,1){7}}
\end{picture}}
\newcommand{\tcinqsix}{\begin{picture}(15,20)(-7,-1)
\put(3,0){\circle*{2}}
\put(-0.65,0){$\vee$}
\put(6,7){\circle*{2}}
\put(0,7){\circle*{2}}
\put(-3.65,7){$\vee$}
\put(3,14){\circle*{2}}
\put(-3,14){\circle*{2}}
\end{picture}}
\newcommand{\tcinqhuit}{\begin{picture}(15,26)(-5,-1)
\put(3,0){\circle*{2}}
\put(-0.65,0){$\vee$}
\put(6,7){\circle*{2}}
\put(0,7){\circle*{2}}
\put(0,14){\circle*{2}}
\put(0,7){\line(0,1){7}}
\put(0,21){\circle*{2}}
\put(0,14){\line(0,1){7}}
\end{picture}}
\newcommand{\tcinqonze}{\begin{picture}(15,26)(-5,-1)
\put(3,5){\circle*{2}}
\put(-0.65,5){$\vee$}
\put(6,12){\circle*{2}}
\put(0,12){\circle*{2}}
\put(3,0){\circle*{2}}
\put(3,0){\line(0,1){5}}
\put(0,12){\line(0,1){7}}
\put(0,19){\circle*{2}}
\end{picture}}
\newcommand{\tcinqtreize}{\begin{picture}(5,26)(-2,-1)
\put(0,0){\circle*{2}}
\put(0,0){\line(0,1){7}}
\put(0,7){\circle*{2}}
\put(0,7){\line(0,1){7}}
\put(0,14){\circle*{2}}
\put(-3.65,14){$\vee$}
\put(-3,21){\circle*{2}}
\put(3,21){\circle*{2}}
\end{picture}}
\newcommand{\tcinqquatorze}{\begin{picture}(9,26)(-5,-1)
\put(0,0){\circle*{2}}
\put(0,0){\line(0,1){5}}
\put(0,5){\circle*{2}}
\put(0,5){\line(0,1){5}}
\put(0,10){\circle*{2}}
\put(0,10){\line(0,1){5}}
\put(0,15){\circle*{2}}
\put(0,15){\line(0,1){5}}
\put(0,20){\circle*{2}}
\end{picture}}
\newcommand{\tdtroisunb}[3]{\begin{picture}(20,12)(-5,-1)
	\put(3,0){\circle*{2}}
	\put(6,7){\circle*{2}}
	\put(0,7){\circle*{2}}
	\put(-0.65,0){$\vee$}
	\put(5,-2){\tiny #1}
	\put(9,5){\tiny #2}
	\put(-8,5){\tiny #3}
	\end{picture}}
\newcommand{\N}{\mathbb{N}} % entiers naturels
\newcommand{\R}{\mathbb{R}} % corps de base réels
\newcommand{\K}{\mathbb{K}} % corps de base
\DeclareMathOperator{\Prm}{Prim}
\newcommand{\Prim}[2]{\ifthenelse{\equal{#2}{}}{\Prm(#1)}{{\Prm(#1)}_{#2}}} % éléments primitifs de #1 de degré #2
\DeclareMathOperator{\Gn}{Gen}
\newcommand{\Gen}[2]{\ifthenelse{\equal{#2}{}}{\Gn(#1)}{{\Gn(#1)}_{#2}}} % éléments générateurs de #1 de degré #2
\newcommand{\Idl}[1]{\mathcal{#1}} % Noter un idéal de Hopf d'une algèbre de Hopf
\newcommand{\Sym}{Sym} % fonctions symétriques
\newcommand{\TR}{\mathcal{T}_{\textsc{R}}} % ensemble des arbres enracinés
\newcommand{\FRt}{\mathcal{F}_{\textsc{R}}} % ensemble des forets enracinés
\newcommand{\sym}[1]{\mathcal{S}(#1)} % écrire une algèbre symétrique
\newcommand{\g}[1]{\mathfrak{g}_{#1}} % algèbre de Lie g(#1)
\newcommand{\Ug}[1]{\mathcal{U}(\g{#1})} % algèbre enveloppante de l'algèbre de Lie g(#1)
\newcommand{\Hc}{\mathcal{H}_{\tiny\textsc{CK}}} % algèbre de Connes-Kreimer ie algèbre de Hopf des arbres enracinés 
\newcommand{\Hgl}{\mathcal{H}_{\tiny\textsc{GL}}} % algèbre de Grossman-Larson ie algèbre de Hopf des arbres enracinés dual 
\newcommand{\dual}[1]{#1^{\circledast}} % écriture du dual gradué de #1
\newcommand{\ot}{\otimes} % raccourci pour écrire le produit tensoriel
\newcommand{\De}{\Delta} %  raccourci pour écrire le delta du coproduit
\newcommand{\Rq}[1]{\ifthenelse{\equal{#1}{}}{\subparagraph{Remarks.}}{\subparagraph{Remark.}}} % remarque(s)
\newcommand{\Ex}[1]{\ifthenelse{\equal{#1}{}}{\subparagraph{Examples.}}{\subparagraph{Example.}}} % exemple(s)
\newcommand{\CQMM}{Cartier-Quillen-Milnor-Moore theorem} % pour ne pas à avoir à écrire les nom à la main à chaque fois.
\newcommand{\PH}{Poincaré-Hilbert} % pour ne pas à avoir à écrire les nom à la main à chaque fois
\newcommand{\HFdB}{\mathcal{H}_{FdB}} % notation algèbre de Hopf de Faà di Bruno
\newcommand{\Com}{\mathcal{C}om}
\newcommand{\PreLie}{\mathcal{P}re\mathcal{L}ie}
\newcommand{\PostLie}{\mathcal{P}ost\mathcal{L}ie}
\newtheorem{defi}{\indent Definition}
\newtheorem{lemma}[defi]{\indent Lemma}
\newtheorem{cor}[defi]{\indent Corollary}
\newtheorem{theo}[defi]{\indent Theorem}
\newtheorem{prop}[defi]{\indent Proposition}
\newtheorem{conj}[defi]{\indent Conjecture}
\newcommand{\Hd}{\mathcal{H}_{\tiny\textsc{D}}}
\newcommand{\D}{\mathcal{D}}
\newcommand{\Qd}{\mathcal{Q}}
\newcommand{\Ch}{\mathscr{C}} 
\newcommand{\Sp}{\mathscr{S}}
\newcommand{\sbT}{\mathcal{T}{\tiny\textsc{SB}}} % arbres enracinés sous-binaires
\newcommand{\sbF}{\mathcal{F}{\tiny\textsc{SB}}} % forêts enracinées sous-binaires
\newcommand{\HsbT}{\mathbf{SBT}} % algèbre de Hopf des arbres enracinés sous-binaires
\DeclareMathOperator{\interne}{int} % nb sommets internes
\begin{document}
\title{On the combinatorics of the Hopf algebra of dissection diagrams}
\date{}
\author{Cécile Mammez\\
	\small{\it Univ. Littoral Côte d'Opale,}\\
	\small{\it EA 2597 - LMPA Laboratoire de Mathématiques Pures et Appliquées Joseph Liouville,} \\ 
	\small{\it F-62228 Calais, France and CNRS, FR 2956, France}\\
	\small{e-mail : mammez@lmpa.univ-littoral.fr}
}
	
\maketitle

\textbf{Abstract.} In this article, we are interested in the Hopf algebra $\Hd$ of dissection diagrams introduced by Dupont in his thesis. We use the version with a parameter $x\in\K$. We want to study its underlying coalgebra. We conjecture it is cofree, except for a countable subset of $\K$. If $x=-1$ then we know there is no cofreedom. We easily see that $\Hd$ is a free commutative right-sided combinatorial Hopf algebra according to Loday and Ronco. So, there exists a pre-Lie structure on its graded dual. Furthermore $\dual{\Hd}$ and the enveloping algebra of its primitive elements are isomorphic. Thus, we can equip $\dual{\Hd}$ with a structure of Oudom and Guin. We focus on the pre-Lie structure on dissection diagrams and in particular on the pre-Lie algebra generated by the dissection diagram of degree $1$. We prove that it is not free.  We express a Hopf algebra morphism between the Grossman and Larson Hopf algebra and $\dual{\Hd}$ by using pre-Lie and Oudom and Guin structures. \\

\textbf{Keywords.} Combinatorial Hopf algebras, dissection diagrams, cofreedom, rooted trees, pre-Lie algebras, enveloping algebras, morphism, inserting process. \\

\textbf{Résumé.} Dans cet article, nous nous intéressons à l'algèbre de Hopf à paramètre $\Hd$ des diagrammes de dissection introduite par Dupont dans sa thèse de doctorat. Nous cherchons plus particulièrement à étudier sa cogèbre sous-jacente. Nous conjecturons qu'elle est colibre excepté pour un ensemble dénombrable de paramètres. Il n'y a pas de coliberté lorsque le paramètre vaut $-1$. L'algèbre de Hopf $\Hd$ est un exemple d'algèbre de Hopf combinatoire commutative-associative-libre droite selon Loday et Ronco. Ceci implique l'existence d'une structure pré-Lie sur son dual gradué dont on sait qu'il est isomorphe à l'algèbre enveloppante de ses primitifs. Il est alors possible de munir $\dual{\Hd}$ d'une structure de Oudom et Guin. Nous nous intéréssons à la structure pré-Lie des diagrammes de dissection et plus particulièrement à la sous-algèbre pré-Lie non triviale engendrée par le diagramme de dissection de degré $1$. Nous  montrons que cette dernière n'est pas libre. Nous explicitons un morphisme d'algèbres de Hopf entre celle de Grossman et Larson et $\dual{\Hd}$ grâce aux structures pré-Lie et aux structures de Oudom et Guin.   \\

\textbf{Mots-clés.} Algèbres de Hopf combinatoires, diagrammes de dissection, arbres enracinés, algèbres pré-Lie, algèbres enveloppantes, morphisme, procédé d'insertion. \\

\textbf{AMS classification.} 16T30, 05C76.

\tableofcontents
	
\allowdisplaybreaks
\ud{0.7}
\section*{Introduction} 

The Hopf algebra of dissection diagrams comes from the number theory and was introducted by Dupont \cite{Dupont2014a} \cite[chapter 2]{Dupont2014}. He considers the coproduct computation problem in the fundamental Hopf algebra of the category of mixed Hodge-Tate structures. Thanks to dissection diagrams, he aims at computing this coproduct for motivic dissection polylogorithms.

A dissection diagram of degree $n$ is a $(n+1)$-gon with a set of $n$ non-intersecting chords forming a planar rooted tree. Dupont builds a coalgebra structure on the symmetric algebra of dissection diagrams which makes it become a Hopf algebra denoted by $\Hd$. The coproduct is given with a parameter $x$ selection-quotient process  \cite[sections 2.1.2 and 2.1.3]{Dupont2014}. Dupont considers the case $x=-1$, defines then a decorated version and explains compatibilities between decorations and chords contraction in an oriented graph  \cite[section 2.1.4]{Dupont2014}.  The set of decorations must be a group and in this case, Dupont uses the complex numbers group. To each dissection diagram $D$, he defines dissection polylogarithms $I(D)$ as an absolutely convergent integral on a simplex of a differential form \cite[definition 2.3.5]{Dupont2014}. The differential form depends on decorated chords and the simplex on decorated sides of $D$ \cite[section 2.3.2]{Dupont2014}. He gives then a motivic version $I^{\mathcal{H}}(D)$ \cite[section 2.4.1]{Dupont2014} and computes the coproduct  \cite[section 2.4.2, theorem 2.4.9]{Dupont2014}.

In this article, we aims at understanding the dissection diagrams Hopf algebra combinatorics. We first recall the Hopf algebra construction and two particular families of dissection diagrams stressed by Dupont, the path trees' one and the corollas' one, because they generate Hopf subalgebras. We express their Hopf algebraic structures as group coordinate algebras. The path trees subalgebra and the Hopf algebra of symmetric functions are isomorphic. The corollas subalgebra is the dissection diagrams version of the Faà di Bruno Hopf algebra. Then, we discuss cofreedom. In the case of degree three, we give a basis of the primitive element vectors space but, their dimensions don't allow to conclude. Computations, made in joint work with Jean Fromentin, attest that it is not cofree with the parameter $x=-1$. We conclude this first section with computations of the antipode.

The second part consists in studying $\Hd$ by using rooted trees Hopf algebras. The choice consisting in sending a dissection diagram to its underlying rooted tree is not efficient, because it doesn't respect the coalgebra structure. Dupont \cite[remark 2.1.15]{Dupont2014} makes an allusion to a pre-Lie structure on dissection diagrams. The Hopf algebra $\dual{\Hd}$ is graded, connected, cocommutative, so is isomorphic to the enveloping algebra $\Ug{\g{\D}}$ of its primitive elements $\g{\D}$. According to Loday and Ronco, $\Hd$ is a free-commutative right-sided combinatorial Hopf algebra. This implies that there exists a pre-Lie structure on $\g{\D}$. In that way, we can use the  Oudom and Guin structural theorem \cite[proposition 2.7 - theorem 2.12]{Oudom2008} about enveloping algebras of pre-Lie algebras. The goal is then to study the pre-Lie algebra with parameter of dissection diagrams. Indeed, let $x$ be a parameter. If  the pre-Lie algebra of dissection diagrams associated to $x$ is free then its enveloping algebra is free too and, by duality, the dissection Hopf algebra of dissection diagrams $\Hd$ associated to $x$ is cofree. After a brief recall about pre-Lie algebras and the Grossman and Larson Hopf algebra, we present the Hopf algebra  $\dual{\Hd}$ and the pre-Lie structure on $\g{\D}$. We describe the unique morphism of Hopf algebras $\varphi$ respecting Oudom and Guin structures of  $\Hgl$ and $\dual{\Hd}$ and sending the rooted tree $t=\tun$ of degree $1$  to  \ud{0.5} $D=\diagramme{1}{}{{1/0}}{}{}{}{}{}$. This morphism relies on a inserting process of chords (propositions \ref{insertionuna}, \ref{valLundiagramme}, \ref{insertionunb} and \ref{valLdeuxdiagrammes}). \ud{0.7} We prove that the pre-Lie algebra generated by \ud{0.5} $\diagramme{1}{}{{1/0}}{}{}{}{}{}$ \ud{0.7} is not free (corollary \ref{absliberte}) and it is a strict sub-pre-Lie algebra of  $\g{\D}$ (proposition \ref{engendrement}).  Then the pre-Lie algebra of dissection diagrams is not free and it doesn't answer the question of cofreedom. We conjecture the kernel of $\varphi$ is the Hopf biideal generated by rooted trees with at least a vertex of degree at least three (conjecture \ref{conjnoyau}). Propositions \ref{etapeun}, \ref{etapedeux} and \ref{triangulaire} are first steps to solve this conjecture.

This article is a shortened version of chapter 3 of my thesis \cite[chapitre 3]{Mammezb}. 

\section{Hopf algebra of dissection diagrams}
\subsection{Recalls}
Let $\K$ be a commutative base field of characteristic 0. We recall some notations and the construction of the Hopf algebra of dissection diagrams. For all non negative integer $n$, we consider a regular oriented $(n+1)$-gone $\Pi_{n}$ with a special vertex called root. We draw $\Pi_{n}$ as a circle and put the root at the bottom. An arc between two vertices is a side and a line between two different vertices is a chord.

\begin{defi}
	A dissection diagram of degree $n$ is a set of $n$ non-intersecting chords of $\Pi_{n}$ such that the graph formed by the chords is acyclic. So the $n$ chords form a planar rooted tree whose root is the root of $\Pi_{n}$. For all dissection diagram $D$, we denote by $\Ch(D)$ the set of its chords. 
\end{defi}
\Ex{1} We consider the dissection diagram
\[D=\exempleintro.\]
Its sides are colored in blue and its chords are colored in red.

\begin{lemma}
	The number of dissection diagrams of degree $n\in\N$ is given by 
	\[d_{n}=\frac{1}{2n+1}\binom{3n}{n}\]
	and satisfied the following reccursive relation
	\[\forall n\geq 1\text{, } d_{n}=\sum_{\substack{i_{1},i_{2},i_{3}\geq 0 \\ i_{1}+i_{2}+i_{3}=n-1}}d_{i_{1}}d_{i_{2}}d_{i_{3}}.\] 
\end{lemma}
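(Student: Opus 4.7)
The plan is to first establish the recursion combinatorially and then deduce the closed-form formula via Lagrange inversion.

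For the recursion, I decompose a dissection diagram $D$ of degree $n \geq 1$ canonically into three smaller ones. Label the vertices of $\Pi_n$ as $v_0, v_1, \ldots, v_n$ with $v_0$ the root. The chord graph of $D$ is a spanning tree on these $n+1$ vertices, so $v_0$ is incident to at least one chord. Let $b := \min\{j : v_0 v_j \in \Ch(D)\}$ and $c := v_0 v_b$. The chord $c$ cuts $\Pi_n$ into a left sub-polygon on vertex set $\{v_0, v_1, \ldots, v_b\}$ and a right sub-polygon on $\{v_0, v_b, v_{b+1}, \ldots, v_n\}$. By minimality of $b$ and the non-crossing property, the only chord from $v_0$ that meets the closure of the left sub-polygon is $c$, so the chords of $D$ lying strictly inside the left sub-polygon form a spanning tree of $\{v_1, \ldots, v_b\}$, giving a dissection diagram $D_1$ of degree $b - 1$.

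Removing $c$ from the chord-tree yields two subtrees $S_0 \ni v_0$ and $S_b \ni v_b$. Planarity forces $S_0 \subseteq \{v_0, v_{b+1}, \ldots, v_n\}$, and the non-crossing condition inside the right sub-polygon then produces an index $m \in \{b, \ldots, n\}$ such that $S_0 = \{v_{m+1}, \ldots, v_n, v_0\}$ and $S_b \cap \{v_b, \ldots, v_n\} = \{v_b, \ldots, v_m\}$. These yield dissection diagrams $D_2$ of degree $m - b$ and $D_3$ of degree $n - m$. The procedure is reversible (given $(D_1, D_2, D_3)$ of degrees $(i_1, i_2, i_3)$ summing to $n-1$, one recovers $b = i_1 + 1$, $m = i_1 + i_2 + 1$, glues the three pieces into their sub-polygons, and reinserts $c$), so summing over $b$ and $m$ establishes
\[d_n = \sum_{\substack{i_1, i_2, i_3 \geq 0 \\ i_1 + i_2 + i_3 = n - 1}} d_{i_1} d_{i_2} d_{i_3}.\]

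Setting $f(x) := \sum_{n \geq 0} d_n x^n$, the recursion translates to the functional equation $f = 1 + x f^3$. Writing $g := f - 1$, we have $g = x(1+g)^3$, and Lagrange inversion yields $[x^n] g = \frac{1}{n} [y^{n-1}] (1+y)^{3n} = \frac{1}{n} \binom{3n}{n-1}$; the identity $\binom{3n}{n-1} = \frac{n}{2n+1} \binom{3n}{n}$ gives the closed form $d_n = \frac{1}{2n+1} \binom{3n}{n}$, which also matches $d_0 = 1$.

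The main obstacle is the planarity step identifying the splitting index $m$ and showing that $S_0$ and $S_b$ occupy two complementary arcs of the right sub-polygon's boundary. While visually clear from the non-crossing hypothesis, a rigorous justification uses acyclicity of the chord graph together with a Jordan-curve argument on the embedded subtree, and one must then verify that each of $D_1, D_2, D_3$ is a genuine dissection diagram on its sub-polygon with the appropriate choice of root.
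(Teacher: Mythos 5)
Your proof is correct and follows essentially the same route as the paper: both decompose a degree-$n$ diagram into a triple $(D_1,D_2,D_3)$ determined by the chord from the root to its first neighbour, derive the functional equation $d=1+hd^3$, and conclude by Lagrange inversion. Your write-up is more explicit about why the non-crossing/planarity hypothesis makes the decomposition well defined and bijective, a point the paper (following Dupont) leaves implicit.
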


\begin{proof}
	Let recall ideas of the proof explained by Dupont \cite[lemme 2.1.1]{Dupont2014}. For all dissection diagram $D$, there exists an unique triple of dissection diagrams  $(D_{1},D_{2},D_{3})$ and an unique triple of integers $(i_{1},i_{2},i_{3})$ such that   $i_{1}+i_{2}+i_{3}=n-1$, for all $j\in\{1,2,3\}$, the dissection diagram $D_{j}$ is of degree $i_{j}$ and $D$ is the dissection diagram 
	\[\defDiag{{6/0}}{{2/6/D_{1},10/6/D_{2},9/0/D_{3}}}[-].\]
	The black vertex is the first vertex, in the clockwise orientation, connected with the root. So, if 
	$d(h)=\sum\limits_{n\geq 0}d_{n}h^{n}$, then  \[d(h)=1+hd(h)^{3}.\]
	By the Lagrange inversion formula, for all integer $n\in\N$, $d_{n}$ is given by \[\displaystyle d_{n}=\frac{1}{n}\langle(1+h)^{3n},h^{n-1}\rangle=\frac{1}{2n+1}\binom{3n}{n}\] where $\langle(1+h)^{3n},h^{n-1}\rangle$ is the coefficient of $h^{n-1}$ in  $(1+h)^{3n}$. 
\end{proof}

Now we denote by $\D$ the vector space spanned by dissection diagrams. The formal series $d$ (\cite[séquence A001764 ]{Sloane}) recalled in the proceding proof is the \PH{} series of $\D$.

As a dissection diagram $D$ is clockwise oriented, it is possible to label the sides of $\Pi_{n}$ and the chords of $D$. If it is necessary, we label the root by 0.   
\Ex{1} 
\[D=\exempleintro.\]

So, it is possible to do the following identification 
\[\Ch(D)\simeq\{1,\dots,n\}\simeq\Sp^{+} \text{ where } \Sp^{+}=\{\text{sides of } \Pi_{n}\}\setminus\{\text{side } 0\}.\]

As it is natural, sometimes we won't write any label. For instance, the dissection diagram $D=\diagramme{3}{n}{{1/0,2/1,3/2}}{}{}{}{}{}$ becomes $D=\diagramme{3}{}{{1/0,2/1,3/2}}{}{}{}{}{}$.

Let $D$ be a dissection diagram and $C$ be a subset of $\Ch(D)$. We assume that the cardinality of $C$ equals $p$. Chords in $C$ give a partition of $\Pi_{\deg(D)}$ in $p+1$ faces. For each face $\alpha$, $\Sp_{C}(\alpha)$ is the set of sides of $\Pi_{\deg(D)}$ which are in the face $\alpha$. We shall consider the set $\Sp_{C}^{+}(D)=\sqcup_{\alpha}\Sp_{C}^{+}(\alpha)$ where $\Sp_{C}^{+}(\alpha)=\Sp_{C}(\alpha)\setminus\{\min(\Sp_{C}(\alpha))\}$.

\begin{prop}
	We define two maps over $\Hd$, which is, as vector space, the symmetric algebra generated by $\D$ (\emph{i.e.} as vector space $\Hd=\sym{\D}$), a product $m$ which is the disjoint union, and a coproduct $\De$ given by a selection-quotient process:
	\[m:\left\{
	\begin{array}{rcl}
	\Hd\otimes\Hd &\longrightarrow & \Hd\\
	D_{1}\ot D_{2} &\longrightarrow & D_{1}D_{2}
	\end{array}
	\right.\]
	and
	\[\De:\left\{
	\begin{array}{rcl}
	\Hd &\longrightarrow & \Hd\ot\Hd\\
	D &\longrightarrow & \sum\limits_{C\subset\Ch(D)}x^{k_{C}(D)}q_{C}(D)\ot r_{C}(D)
	\end{array}
	\right.\]
	where
	\begin{enumerate}
		\item $x\in\K$ is scalar,
		\item $q_{C}(D)$ is the disjoint union obtained by contracting chords in $C$,
		\item $r_{C}(D)$ is the dissection diagram obtained by keeping chords in $C$ and contracting sides of $\Pi_{\deg(D)}$ which are in the set $\Sp_{C}^{+}(D)$,
		\item $k_{C}(D)$ is the number of changes of direction we need to point properly chords of $r_{C}(D)$.
	\end{enumerate}
	We call $1_{\Hd}$ the unit of $\Hd$ algebra  and  $\varepsilon$ its counit. The space $(\Hd,m,1_{\Hd},\De,\varepsilon)$ is a Hopf algebra.
\end{prop}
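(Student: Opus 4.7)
The plan is to verify each axiom of a Hopf algebra in turn. The product $m$ is the multiplication of the free commutative algebra $\sym{\D}$, so it is commutative, associative, and unital by construction; the grading by total number of chords is preserved because disjoint union of dissection diagrams adds degrees. The counit $\varepsilon$ kills $\bigoplus_{n>0}\Hd_n$ and sends $1_{\Hd}$ to $1_{\K}$; the counit axioms $(\varepsilon\ot\mathrm{id})\De=\mathrm{id}=(\mathrm{id}\ot\varepsilon)\De$ follow by isolating the two extreme terms in the sum defining $\De$: the subset $C=\emptyset$ gives $q_{\emptyset}(D)=D$, $r_{\emptyset}(D)=1_{\Hd}$, $k_{\emptyset}(D)=0$, and $C=\Ch(D)$ gives the symmetric term $1_{\Hd}\ot D$ with $k_{\Ch(D)}(D)=0$.

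For the bialgebra compatibility between $m$ and $\De$, observe that if $D=D_{1}D_{2}$ is a disjoint union then $\Ch(D)=\Ch(D_{1})\sqcup\Ch(D_{2})$, the faces of $D$ split between the two components, and any $C\subset\Ch(D)$ decomposes uniquely as $C_{1}\sqcup C_{2}$. Consequently the operations $q_{C}$ and $r_{C}$ factor as $q_{C_{1}}q_{C_{2}}$ and $r_{C_{1}}r_{C_{2}}$, and the orientation-change count satisfies $k_{C}(D)=k_{C_{1}}(D_{1})+k_{C_{2}}(D_{2})$ since chords from different components never interact. This yields the multiplicativity of $\De$.

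The substantive step is coassociativity $(\De\ot\mathrm{id})\De=(\mathrm{id}\ot\De)\De$. I would expand both sides as triple sums: the left side runs over pairs $(C,C')$ with $C\subset\Ch(D)$ and $C'\subset\Ch(q_{C}(D))=\Ch(D)\setminus C$ and produces
\[x^{k_{C}(D)+k_{C'}(q_{C}(D))}\;q_{C'}(q_{C}(D))\ot r_{C'}(q_{C}(D))\ot r_{C}(D),\]
while the right side runs over $(A,B)$ with $B\subset A\subset\Ch(D)$ and produces
\[x^{k_{A}(D)+k_{B}(r_{A}(D))}\;q_{A}(D)\ot q_{B}(r_{A}(D))\ot r_{B}(r_{A}(D)).\]
The bijection $(C,C')\leftrightarrow (A,B)=(C\sqcup C',\,C)$ matches the two parameterizations and reduces coassociativity to the three combinatorial identities
\begin{align*}
q_{C'}(q_{C}(D))&=q_{C\cup C'}(D),\\
r_{C'}(q_{C}(D))&=q_{C}(r_{C\cup C'}(D)),\\
r_{C}(D)&=r_{C}(r_{C\cup C'}(D)),
\end{align*}
together with the additive relation $k_{C}(D)+k_{C'}(q_{C}(D))=k_{C\cup C'}(D)+k_{C}(r_{C\cup C'}(D))$ on the orientation counts. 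The three geometric identities are proved by tracking the face partition of $\Pi_{\deg(D)}$ induced by selecting a set of chords: selecting $C\cup C'$ refines the partition induced by $C$, and the side-selection sets $\Sp_{\bullet}^{+}$ behave functorially with respect to this refinement, so iterated contractions and restrictions commute in the required way.

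The main obstacle is the additive $k$-identity, because direction changes are defined by a canonical orientation on each restriction $r_{\bullet}(D)$ and must be shown to add up consistently under two different orders of restriction--contraction. I would reduce to a face-by-face analysis: each face of the coarser partition contributes independently to $k$, and within a single face the equality becomes an elementary statement about a convex polygon carrying a planar tree of chords, which can be checked by examining what happens when a single chord is contracted or a single side is collapsed. Once coassociativity is established, the bialgebra $\Hd$ is graded by the total number of chords and connected, since $\Hd_{0}=\K\cdot 1_{\Hd}$; hence an antipode exists automatically and is given by the Takeuchi recursion on the reduced coproduct.
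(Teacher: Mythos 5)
The paper does not prove this proposition at all: it simply cites Dupont's proposition 2.1.11, where the verification is carried out. Your proposal therefore goes well beyond what the paper offers, and its architecture is the standard (and correct) one for a selection--quotient coproduct: the bijection $(C,C')\leftrightarrow(A,B)=(C\sqcup C',C)$ between the index sets of $(\De\ot\mathrm{id})\De$ and $(\mathrm{id}\ot\De)\De$ is the right matching, and the three identities $q_{C'}(q_{C}(D))=q_{C\cup C'}(D)$, $r_{C'}(q_{C}(D))=q_{C}(r_{C\cup C'}(D))$, $r_{C}(D)=r_{C}(r_{C\cup C'}(D))$ are exactly what reduces coassociativity to combinatorics of faces. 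Your treatment of the counit (isolating $C=\emptyset$ and $C=\Ch(D)$, using that $\deg r_{C}(D)=|C|$) and of the antipode (graded connected bialgebra) is also correct. One small remark: since $\De$ is only defined on generators $D\in\D$ and extended as an algebra morphism, compatibility with $m$ is automatic; your verification that $q_{C}$, $r_{C}$ and $k_{C}$ split across connected components is really a consistency check rather than a needed axiom.

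The one place where your argument is genuinely incomplete is the additive identity
\[k_{C}(D)+k_{C'}(q_{C}(D))=k_{C\cup C'}(D)+k_{C}(r_{C\cup C'}(D)),\]
which you correctly single out as the crux but only sketch. The difficulty is that $k_{C}(D)$ is defined globally as a number of changes of direction needed to orient the chords of $r_{C}(D)$ properly (towards the root of the contracted polygon), and the canonical orientation of a chord can change when other chords are contracted or other sides are collapsed; so the claim that ``each face of the coarser partition contributes independently to $k$'' needs an actual argument, typically by showing that $k_{C}(D)$ decomposes as a sum over chords of $C$ of a local $0/1$ indicator comparing the orientation of that chord in $D$ with its orientation in $r_{C}(D)$, and then tracking how that indicator behaves under the refinement $C\subset C\cup C'$. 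Until that is written out, the exponent matching --- which is the only part of the proposition that is not formal --- is asserted rather than proved. This is precisely the content delegated to Dupont in the paper, so your outline is a faithful reconstruction of the intended proof, with the hard lemma left as a (clearly identified) gap.
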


\begin{proof}
Dupont proves it in \cite[proposition 2.1.11]{Dupont2014}.
\end{proof}

\Ex{1}\ud{0.5} As example, we use the dissection diagram $D=\diagramme{3}{n}{{1/0,2/0,3/2}}{}{}{}{}{}$.
\begin{align*}
	\De\po\diagramme{3}{n}{{1/0,2/0,3/2}}{}{}{}{}{}\pf=&\diagramme{3}{}{{1/0,2/0,3/2}}{}{}{}{}{}\ot 1 + \po x\diagramme{2}{}{{1/0,2/0}}{}{}{}{}{}+\diagramme{2}{}{{1/0,2/1}}{}{}{}{}{}+\diagramme{1}{}{{1/0}}{}{}{}{}{}\diagramme{1}{}{{1/0}}{}{}{}{}{}\pf\ot\diagramme{1}{}{{1/0}}{}{}{}{}{}\\
	+&\diagramme{1}{}{{1/0}}{}{}{}{}{}\ot\po\diagramme{2}{}{{1/0,2/1}}{}{}{}{}{}+(1+x)\diagramme{2}{}{{1/0,2/0}}{}{}{}{}{}\pf+1\ot\diagramme{3}{}{{1/0,2/0,3/2}}{}{}{}{}{}.
\end{align*}
\ud{0.7}

\subsection{Path trees and corollas: two Hopf subalgebras of $\Hd$}\label{echellescorolles}

Dupont \cite[examples 2.1.14]{Dupont2014} notices two special families: path trees and corollas. In fact, they form two Hopf subalgebras which are isomorphic to group coordinate Hopf algebras.  We recall the two families and explain the isomorphisms.

\subsubsection{Path trees and symmetric functions}
Let $n$ be a positive integer. The path tree of degree $n$ is the dissection diagramm $Y_{n}$ of degree $n$ such that for all  $i\in\llbracket 1,n-1\rrbracket$, the chord $i$ comes from the vertex $i$ to the vertex $i+1$ and the chord $n$ connects the vertex $n$ and the root. The path tree of degree 0 is the empty dissection diagram  \emph{i.e.} $Y_{0}=1$.
\Ex{} Path trees of degree 1 to 4:
\begin{align*}
Y_{1}=&\diagramme{1}{n}{{1/0}}{}{}{}{}{}, & Y_{3}=&\diagramme{3}{n}{{1/2,2/3,3/0}}{}{}{}{}{},\\
Y_{2}=&\diagramme{2}{n}{{1/2,2/0}}{}{}{}{}{}, & Y_{4}=&\diagramme{4}{n}{{1/2,2/3,3/4,4/0}}{}{}{}{}{}.
\end{align*}
For all non negative integer $n$ we have \[\De(Y_{n})=\sum_{k=0}^{n}\binom{n}{k}Y_{k}\ot Y_{n-k}.\]
Trivially, the vectorial space of disjoint union of path trees is a Hopf subalgebra of $\Hd$. We denote it by $\mathcal{E}_{Y}$. Dupont \cite[examples 2.1.14, 1]{Dupont2014} notices that the  $\mathcal{E}_{Y}$ coproduct and the symmetric functions one are reminiscent. We give an isomorphism between both.

Let $\displaystyle G_{0}=\langle1+\sum_{n=1}^{\infty}q_{n}h^{n}\in\K[[h]]\rangle$ be the multiplicative group of formal series with constant equals 1. For all positive integer $n$ we call $n$-th coordinate map the following map $\Sigma_{n}$ defined by $\Sigma_{n}:\left\{\begin{array}{rcl}\K[[h]] &\longrightarrow &\K \\\displaystyle Q=1+\sum_{n=1}^{\infty}q_{n}h^{n} &\longrightarrow & q_{n}.\end{array}\right.$ We consider the Hopf algebra $\Sym=\K[\Sigma_{1},\dots,\Sigma_{n},\dots]$ with its usual product and the coproduct given by: for all map $f\in\Sym$, for all elements $P$ and $Q$ of $G_{0}$, $\De_{G_{0}}(f)(P\ot Q)=f(PQ)$.

\begin{prop}
	The Hopf algebras $\mathcal{E}_{Y}$ and $\Sym$ are isomorphic.
\end{prop}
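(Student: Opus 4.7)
The plan is to write down an explicit Hopf algebra isomorphism on a polynomial generating family and verify compatibility on generators. The key preliminary observation is that, as commutative algebras, both $\mathcal{E}_{Y}$ and $\Sym$ are polynomial rings in countably many variables: on the one hand $\Hd = \sym(\D)$ by construction, and the path trees $Y_{1}, Y_{2}, \ldots$ are pairwise distinct elements of $\D$, hence algebraically independent, so $\mathcal{E}_{Y} = \K[Y_{1}, Y_{2}, \ldots]$; on the other hand $\Sym = \K[\Sigma_{1}, \Sigma_{2}, \ldots]$ by definition.

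First I would make $\De_{G_{0}}$ explicit on each $\Sigma_{n}$. For $P = 1 + \sum_{i \geq 1} p_{i} h^{i}$ and $Q = 1 + \sum_{j \geq 1} q_{j} h^{j}$ in $G_{0}$, the Cauchy product yields $\Sigma_{n}(PQ) = \sum_{i+j=n} p_{i} q_{j}$ (with the convention $\Sigma_{0}(R) = 1$ for every $R \in G_{0}$), whence
\[\De_{G_{0}}(\Sigma_{n}) = \sum_{k=0}^{n} \Sigma_{k} \ot \Sigma_{n-k}.\]
Comparing this formula with $\De(Y_{n}) = \sum_{k=0}^{n} \binom{n}{k} Y_{k} \ot Y_{n-k}$ suggests defining the algebra morphism $\phi : \mathcal{E}_{Y} \to \Sym$ by $\phi(Y_{n}) = n!\, \Sigma_{n}$ for every $n \geq 1$. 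Since $\K$ has characteristic $0$, the family $(n!\, \Sigma_{n})_{n \geq 1}$ is again a polynomial generating family of $\Sym$, so $\phi$ is an algebra isomorphism.

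It then remains to check the compatibility with the coproducts, and by multiplicativity of $\phi \ot \phi$, $\De$ and $\De_{G_{0}}$ it suffices to check on each generator $Y_{n}$. The computation is direct:
\[(\phi \ot \phi)\, \De(Y_{n}) = \sum_{k=0}^{n} \binom{n}{k} k!(n-k)!\, \Sigma_{k} \ot \Sigma_{n-k} = n! \sum_{k=0}^{n} \Sigma_{k} \ot \Sigma_{n-k} = \De_{G_{0}}(\phi(Y_{n})).\]
Compatibility with unit and counit is immediate ($\phi(1) = 1$, and both $Y_{n}$ and $\Sigma_{n}$ lie in the augmentation ideal for $n \geq 1$), and because $\mathcal{E}_{Y}$ and $\Sym$ are both connected graded bialgebras their antipodes are automatically intertwined by $\phi$, so $\phi$ is a Hopf algebra isomorphism. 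There is no serious obstacle here; the only delicate point is the rescaling by $n!$, forced by the mismatch between the binomial coefficients in $\De(Y_{n})$ and the plain Cauchy convolution formula for $\De_{G_{0}}(\Sigma_{n})$, and for which the characteristic $0$ assumption already made in the paper is essential.
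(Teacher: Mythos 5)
Your proof is correct and follows essentially the same route as the paper, which simply exhibits the morphism $\omega_{1}:\Sym\to\mathcal{E}_{Y}$, $\Sigma_{n}\mapsto\frac{1}{n!}Y_{n}$ — the inverse of your $\phi$ — without writing out the verification. You supply the details the paper omits (the Cauchy-product computation of $\De_{G_{0}}(\Sigma_{n})$, the coproduct check on generators, and the role of characteristic $0$ in the $n!$ rescaling), all of which are accurate.
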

\begin{proof}
	It is sufficient to consider the following Hopf algebra morphism
	\[\omega_{1}:\left\{\begin{array}{rcl}\Sym &\longrightarrow &\mathcal{E}_{Y} \\ \Sigma_{n} &\longrightarrow & \frac{1}{n!}Y_{n}.\end{array}\right.\]
\end{proof}

\subsubsection{Corollas and the Faà di Bruno Hopf algebra}
The second prominent family noticed by Dupont is the corollas' one. For all positive integer $n$, the corolla of degree $n$ is the dissection diagram such that for all $i\in\llbracket 1,n\rrbracket$ the chord $i$ comes from vertex $i$ to the root. The corolla of degree 0 is the empty dissection diagram \emph{i.e.} $X_{0}=1$.

\Ex{} Non-empty corollas of degree $n$ less than or equal to 4:
\begin{align*}
X_{1}=&\diagramme{1}{n}{{1/0}}{}{}{}{}{}, & X_{3}=&\diagramme{3}{n}{{1/0,2/0,3/0}}{}{}{}{}{},\\
X_{2}=&\diagramme{2}{n}{{1/0,2/0}}{}{}{}{}{}, & X_{4}=&\diagramme{4}{n}{{1/0,2/0,3/0,4/0}}{}{}{}{}{}.
\end{align*}
For all non negative integer $n$, we have:
\[\De(X_{n})=\sum_{k=0}^{n}\bigg(\sum_{\substack{i_{0}+\dots+i_{k}=n-k \\ i_{j}\geq 0}}X_{i_{0}}\dots X_{i_{k}}\bigg)\ot X_{k}.\]
Trivially, the vector space spanned by disjoint union of corollas is a Hopf subalgebra of $\Hd$ that we denote by $\mathcal{C}_{X}$.

Let us recall now the Faà di Bruno Hopf algebra construction. Let us consider the set $\displaystyle G_{1}=\langle h+\sum_{n=1}^{\infty}q_{n}h^{n+1}\in\K[[h]]\rangle$. Provided with the natural formal series composition $\circ$, it is the group of formal diffeomorphisms. For all positive integer $n$ we still call $n$-th coordinate map the linear map $\Sigma_{n}$ defined by $\Sigma_{n}:\left\{\begin{array}{rcl}\K[[h]] &\longrightarrow &\K \\\displaystyle Q=h+\sum_{n=1}^{\infty}q_{n}h^{n+1} &\longrightarrow & q_{n}. \end{array}\right.$ We consider the Hopf algebra $\HFdB=\K[\Sigma_{1},\dots,\Sigma_{n},\dots]$ with its usual product and the coproduct given by: for all map $f\in\HFdB$, for all elements $P$ and $Q$ of $G_{1}$,  $\De_{G_{1}}(f)(P\ot Q)=f(Q\circ P)$. It is the Faà di Bruno Hopf algebra.
\begin{prop}
	The Hopf algebras $\mathcal{C}_{X}$ and $\HFdB$ are isomorphic.
\end{prop}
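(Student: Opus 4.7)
The plan is to show that the map $\omega_{2}:\HFdB\to\mathcal{C}_{X}$ defined on generators by $\omega_{2}(\Sigma_{n})=X_{n}$ is a Hopf algebra isomorphism. Since both $\HFdB=\K[\Sigma_{1},\Sigma_{2},\ldots]$ and $\mathcal{C}_{X}=\K[X_{1},X_{2},\ldots]$ are polynomial algebras in countably many algebraically independent generators, $\omega_{2}$ extends uniquely to an algebra isomorphism; it then suffices to verify the coproduct identity $(\omega_{2}\ot\omega_{2})\circ\De_{G_{1}}=\De\circ\omega_{2}$ on each $\Sigma_{n}$.

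First I would compute $\De_{G_{1}}(\Sigma_{n})$ explicitly in the basis of monomials in the $\Sigma_{k}$. Fix $P=h+\sum_{m\geq 1}p_{m}h^{m+1}$ and $Q=h+\sum_{m\geq 1}q_{m}h^{m+1}$ in $G_{1}$, and extend by $p_{0}=q_{0}=1$. From the expansion
$$Q\circ P=P+\sum_{k\geq 1}q_{k}P^{k+1},\qquad P^{k+1}=h^{k+1}\bigg(1+\sum_{m\geq 1}p_{m}h^{m}\bigg)^{k+1},$$
one reads off the coefficient of $h^{n+1}$:
$$\Sigma_{n}(Q\circ P)=\sum_{k=0}^{n}q_{k}\sum_{\substack{i_{0}+\cdots+i_{k}=n-k \\ i_{j}\geq 0}}p_{i_{0}}\cdots p_{i_{k}},$$
so, adopting the convention $\Sigma_{0}=1$,
$$\De_{G_{1}}(\Sigma_{n})=\sum_{k=0}^{n}\bigg(\sum_{\substack{i_{0}+\cdots+i_{k}=n-k \\ i_{j}\geq 0}}\Sigma_{i_{0}}\cdots\Sigma_{i_{k}}\bigg)\ot\Sigma_{k}.$$

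This matches the formula for $\De(X_{n})$ given just above (with $X_{0}=1$) term by term, so $(\omega_{2}\ot\omega_{2})\circ\De_{G_{1}}(\Sigma_{n})=\De(\omega_{2}(\Sigma_{n}))$ for every $n$. Since $\omega_{2}$ and both coproducts are algebra morphisms, this identity propagates from generators to all of $\HFdB$, which shows $\omega_{2}$ is a bialgebra, hence a Hopf algebra, isomorphism.

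The main step, essentially the only one, will be to derive the Fa\`{a} di Bruno coproduct in exactly the form that $\De(X_{n})$ takes. In contrast with the proof of the previous proposition, no normalisation factor analogous to the $1/n!$ appearing in $\omega_{1}$ is required here, because $\De(X_{n})$ carries no binomial coefficients: the selection-quotient coproduct on corollas mirrors the composition of formal diffeomorphisms on the nose.
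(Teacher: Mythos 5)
Your proposal is correct and follows exactly the paper's route: the paper's proof consists solely of exhibiting the morphism $\omega_{2}:\Sigma_{n}\mapsto X_{n}$, and you use the very same map, merely filling in the coproduct verification that the paper leaves implicit. Your computation of $\De_{G_{1}}(\Sigma_{n})$ from the coefficient of $h^{n+1}$ in $Q\circ P$ is accurate and matches the stated formula for $\De(X_{n})$ term by term.
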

\begin{proof}
	It is sufficient to consider the morphism
	\[\omega_{2}:\left\{\begin{array}{rcl}\HFdB &\longrightarrow & \mathcal{C}_{X} \\ \Sigma_{n} &\longrightarrow &X_{n}.\end{array}\right.\]
\end{proof}

\subsection{Primitive elements of degree less than or equal to 3}
\ud{0.5}
We want to study the $\Hd$ coalgebra to determine if it is cofree or not. We start by giving for all parameter $x\in\K$ a basis of the vector space $\Prim{\Hd}{2}$ (respectively $\Prim{\Hd}{3}$) spanned by degree 2 (respectively degree 3) primitive elements. Unfortunately, those two cases only don't allow to answer the question.

Let $F_{\Hd}$ the \PH{} formal series of $\Hd$. By definition, $F_{\Hd}(h)=\displaystyle\prod_{n=1}^{\infty}\frac{1}{(1-h^{n})^{d_{n}}}$, where $d_{n}$ is the number of dissection diagrams of degree $n$ for all non negative integer. If $\Hd$ is cofree, we have $\displaystyle F_{\Prim{\Hd}{}}=1-\frac{1}{F_{\Hd}}$ \emph{i.e.}  \begin{equation}\label{serieprimcolib}
F_{\Prim{\Hd}{}}(h)=h+3h^{2}+9h^{3}+40h^{4}+185h^{5}+\dots
\end{equation}

A basis of $\Prim{\Hd}{2}$ is given by the linearly independant vectors triple $(V_{1},V_{2},V_{3})$ where \begin{align*}
V_{1}=&(1+x)\diagramme{2}{n}{{1/2,2/0}}{}{}{}{}{}-2\diagramme{2}{n}{{1/0,2/1}}{}{}{}{}{},\\
V_{2}=&\diagramme{2}{n}{{1/2,2/0}}{}{}{}{}{}-\diagramme{2}{n}{{1/0,2/0}}{}{}{}{}{},\\
V_{3}=&\diagramme{2}{n}{{1/2,2/0}}{}{}{}{}{}-\diagramme{1}{n}{{1/0}}{}{}{}{}{}\diagramme{1}{n}{{1/0}}{}{}{}{}{}.
\end{align*}

For the third degree case, we add the following notations. 
\begin{defi}
		We call $\D^{+}$ the vector space spanned by non-empty dissection diagrams.  
\end{defi}
\begin{defi}
	 Let $U$ be in $\Hd$.
	\begin{enumerate}
		\item We denote by $l(U)$ its projection over $\D^{+}$, by $q(U)$ its projection over $(\D^{+})^{2}$, $t(U)$ its projection over $(\D^{+})^{3}$ and by $r(U)$ the sum $U-l(U)-q(U)-t(U)$.
		\item We call linear part of $\De(U)$ the projection of $\De(U)$ over $\D^{+}\ot\D^{+}$ denoted by $\delta(U)$. 
		\item The opposite linear part of $\De(U)$ is the linear part of $\De^{op}(U)$ denoted by $\delta^{op}(U)$.
		\item We call quadratic part of  $\De(U)$ the projection of $\De(U)$ over $(\D^{+})^{2}\ot\D^{+}$ denoted by $\Qd(U)$.
		\item The opposite quadratic part of $\De(U)$ is the quadratic part $\De^{op}(U)$ denoted by $\Qd^{op}(U)$.		
	\end{enumerate}
\end{defi}

\begin{lemma}
	Let $p$ be a primitive element of degree 3. We can write it as $p=l(p)+q(p)+t(p)$ and we have: 
	\begin{align*}
	m\circ\delta(l(p))=& -2q(p),\\
	\Qd(l(p))=&0,\\
	t(p)=&k\diagramme{1}{n}{{1/0}}{}{}{}{}{}\diagramme{1}{n}{{1/0}}{}{}{}{}{}\diagramme{1}{n}{{1/0}}{}{}{}{}{} \text{ où } k\in\K,\\
	m\circ\Qd(t(p))=& -3t(p).
	\end{align*}
\end{lemma}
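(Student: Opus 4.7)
The plan is to use the primitivity of $p$, which means $\Delta(p) = p \otimes 1 + 1 \otimes p$, and project it onto appropriate direct summands of $\Hd \otimes \Hd$. Since $\D^+_1 = \K\cdot Y_1$ is one-dimensional, the only degree-$3$ element of $(\D^+)^3 \cap \Hd_3$ is a scalar multiple of $Y_1^3$, so $t(p) = k\, Y_1^3$ for some $k\in\K$. This gives the third identity directly from the unique word-length decomposition of $p$.

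For the other identities, I would write $q(p) = \sum_j b_j Y_1 F_j$ with $F_j$ ranging over the three degree-$2$ dissection diagrams, and define $c_j\in\K$ by $\Delta(F_j) = F_j \otimes 1 + 1 \otimes F_j + c_j\, Y_1 \otimes Y_1$. Using multiplicativity of $\Delta$ one computes
\[\Delta(Y_1 F_j) = Y_1 F_j \otimes 1 + 1 \otimes Y_1 F_j + Y_1\otimes F_j + F_j\otimes Y_1 + c_j(Y_1^2\otimes Y_1 + Y_1\otimes Y_1^2),\]
\[\Delta(Y_1^3) = Y_1^3\otimes 1 + 1\otimes Y_1^3 + 3(Y_1^2\otimes Y_1 + Y_1\otimes Y_1^2).\]
Primitivity then forces the sum of these reduced coproducts (together with the one of $l(p)$) to vanish, and this equation splits along the bi-degree and word-length grading of $\Hd\otimes\Hd$ into four independent equations in total degree $3$. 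Projecting onto $\D^+_1\otimes\D^+_2$ expresses the corresponding half of $\delta(l(p))$ as $-\sum_j b_j\, Y_1\otimes F_j$, and symmetrically for $\D^+_2\otimes\D^+_1$. Adding these two and applying the commutative multiplication $m$ yields the first identity $m\circ\delta(l(p)) = -2\, q(p)$.

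The remaining two projections, onto $Y_1^2\otimes Y_1$ and onto $Y_1\otimes Y_1^2$, each express $\Qd(l(p))$ (respectively its opposite) as the scalar $-(\sum_j b_j c_j + 3k)$ times the corresponding tensor. A direct binomial computation gives $\Qd(Y_1^3) = 3\, Y_1^2\otimes Y_1$, and combining this with the primitivity constraint on $\Qd(p)$ produces the relation of the fourth identity between $m\circ\Qd(t(p))$ and $t(p)$; substituting back into the $Y_1^2\otimes Y_1$-projection then forces $\sum_j b_j c_j + 3k = 0$, i.e.\ the second identity $\Qd(l(p)) = 0$. The main obstacle is the bookkeeping together with the non-cocommutativity of $\Hd$: the projections onto bi-degrees $(1,2)$ and $(2,1)$ yield a priori independent equations, so one must verify carefully that the two expressions for $\Qd(l(p))$ and its opposite are mutually compatible, and in particular that the $x$-twisted contributions encoded in the $c_j$'s for each degree-$2$ diagram combine so as to give a single coherent scalar constraint on $k$.
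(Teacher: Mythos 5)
Your strategy --- decompose $p$ by word length, expand $\De(Y_{1}F_{j})$ and $\De(Y_{1}^{3})$ by multiplicativity of $\De$, and project the vanishing of the reduced coproduct of $p$ onto the word-length components of $\Hd\ot\Hd$ --- is the same as the paper's, and your treatment of the first and third identities is correct. The second identity, however, is not established. Your two quadratic projections give
\[
\Qd(l(p))=-\Big(\sum_{j}b_{j}c_{j}+3k\Big)\,Y_{1}^{2}\ot Y_{1}
\qquad\text{and}\qquad
\Qd^{op}(l(p))=-\Big(\sum_{j}b_{j}c_{j}+3k\Big)\,Y_{1}^{2}\ot Y_{1},
\]
and nothing you write forces this common scalar to vanish, so the argument does not close. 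The missing ingredient --- the only genuinely asymmetric input available --- is that for a \emph{single} dissection diagram $D$ every term $x^{k_{C}(D)}q_{C}(D)\ot r_{C}(D)$ of $\De(D)$ has a connected right factor $r_{C}(D)$; only the left factor $q_{C}(D)$ can be a nontrivial disjoint union. Hence the projection of $\De(l(p))$ onto $\D^{+}\ot(\D^{+})^{2}$ vanishes identically, i.e.\ $\Qd^{op}(l(p))=0$; the $Y_{1}\ot Y_{1}^{2}$-projection then reads $\sum_{j}b_{j}c_{j}+3k=0$, and the $Y_{1}^{2}\ot Y_{1}$-projection gives $\Qd(l(p))=0$. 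This is exactly what the paper's proof exploits when it notes that $\Qd(q(p))=\Qd^{op}(q(p))$ and $\Qd(t(p))=\Qd^{op}(t(p))$ and concludes.

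Concerning the fourth identity: your own expansion gives $\Qd(Y_{1}^{3})=3\,Y_{1}^{2}\ot Y_{1}$, hence $m\circ\Qd(t(p))=+3\,t(p)$, not $-3\,t(p)$. Taken literally, the printed identity would force $t(p)=0$, which is false (for the primitive element $Y_{3}-3Y_{2}Y_{1}+2Y_{1}^{3}$, which is $V_{9}$ of the next proposition, one has $t(p)=2Y_{1}^{3}$); the statement is evidently a misprint for $m\circ\Qd(q(p))=-3\,t(p)$, which does follow from $\sum_{j}b_{j}c_{j}+3k=0$ in the same way that $m\circ\delta(l(p))=-2q(p)$ follows from the linear projections. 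Your assertion that the binomial computation \enquote{produces the relation of the fourth identity} is therefore not tenable: the computation produces the opposite sign, and the subsequent step in which you \enquote{substitute back} to deduce $\sum_{j}b_{j}c_{j}+3k=0$ is not a valid derivation of the second identity. You should either prove the corrected identity or flag the sign clash explicitly, and in either case supply the connectedness argument above.
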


\begin{proof}
	Let $p$ be a primitive element of degree 3. By commutativity, we have $m\circ\delta(l(p))=-2q(p)$. By definition of $t(p)$, as $p$ is of degree 3, there exists a scalar $k\in\K$ such that $t(p)=k\diagramme{1}{n}{{1/0}}{}{}{}{}{}\diagramme{1}{n}{{1/0}}{}{}{}{}{}\diagramme{1}{n}{{1/0}}{}{}{}{}{}$. 
	Similarly, there exists scalars $k_{1}$, $k_{2}$ and $k_{3}$ in  $\K$ such that $q(p)=k_{1}\diagramme{2}{n}{{1/2,2/0}}{}{}{}{}{}\diagramme{1}{n}{{1/0}}{}{}{}{}{}+k_{2}\diagramme{2}{n}{{1/0,2/0}}{}{}{}{}{}\diagramme{1}{n}{{1/0}}{}{}{}{}{}+k_{3}\diagramme{2}{n}{{1/0,2/1}}{}{}{}{}{}\diagramme{1}{n}{{1/0}}{}{}{}{}{}$.
	So $\Qd(t(p))=\Qd^{op}(t(p))$ and $\Qd(q(p))=\Qd^{op}(q(p))$. So $\Qd(l(p))=0$.
\end{proof}

\begin{lemma}
	Let $p$ be a primitive element of degree 3. The part $l(p)$ is a linear combination of the independant vectors $U_{1}$, \dots, $U_{9}$ where
	\begin{align*}
	U_{1}=&\diagramme{3}{}{{1/0,2/1,3/0}}{}{}{}{}{}-x\diagramme{3}{}{{1/2,2/0,3/0}}{}{}{}{}{}+x\diagramme{3}{}{{1/0,2/0,3/2}}{}{}{}{}{},  
	&U_{2}=&\diagramme{3}{}{{1/0,2/1,3/2}}{}{}{}{}{}-x^{2}\diagramme{3}{}{{1/2,2/0,3/0}}{}{}{}{}{}+x\diagramme{3}{}{{1/0,2/1,3/1}}{}{}{}{}{},\\
	U_{3}=&\diagramme{3}{}{{1/0,2/0,3/2}}{}{}{}{}{}-\diagramme{3}{}{{1/0,2/0,3/0}}{}{}{}{}{},
	&U_{4}=&\diagramme{3}{}{{1/2,2/0,3/2}}{}{}{}{}{}-\diagramme{3}{}{{1/3,2/3,3/0}}{}{}{}{}{},\\
	U_{5}=&\diagramme{3}{}{{1/0,2/3,3/1}}{}{}{}{}{}-(1+x)\diagramme{3}{}{{1/0,2/0,3/0}}{}{}{}{}{}+\diagramme{3}{}{{1/2,2/0,3/2}}{}{}{}{}{},
	&U_{6}=&\diagramme{3}{}{{1/3,2/1,3/0}}{}{}{}{}{}-\diagramme{3}{}{{1/2,2/0,3/2}}{}{}{}{}{},\\
	U_{7}=&\diagramme{3}{}{{1/2,2/0,3/0}}{}{}{}{}{}-\diagramme{3}{}{{1/3,2/1,3/0}}{}{}{}{}{},
	&U_{8}=&\diagramme{3}{}{{1/0,2/3,3/0}}{}{}{}{}{},
	~U_{9}=\diagramme{3}{}{{1/2,2/3,3/0}}{}{}{}{}{}.
	\end{align*}
\end{lemma}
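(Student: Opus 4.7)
The plan is a direct linear-algebraic verification that exploits the constraints on $l(p)$ already obtained in the preceding lemma. I would begin by enumerating the $d_3=\tfrac{1}{7}\binom{9}{3}=12$ dissection diagrams of degree $3$ (among which one recognizes the path tree $Y_3$, the corolla $X_3$, and the ten remaining planar ternary configurations inscribed in the square), and fix them as an ordered basis $(D_1,\dots,D_{12})$ of $\D_3^+$. For each $D_j$ I would systematically expand the coproduct by running through all $2^3=8$ subsets $C\subset\Ch(D_j)$, recording $q_C(D_j)$, $r_C(D_j)$ and the sign exponent $k_C(D_j)$. The outputs of immediate interest are the projections $\delta(D_j)$ onto $\D^+\otimes\D^+$, the quadratic part $\Qd(D_j)$ onto $(\D^+)^2\otimes\D^+$, and the opposite quadratic part $\Qd^{op}(D_j)$ onto $\D^+\otimes(\D^+)^2$; at total degree $3$ these two quadratic target-spaces are each one-dimensional, spanned respectively by $\tun\tun\otimes\tun$ and $\tun\otimes\tun\tun$.

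Next, I would write a generic candidate $l=\sum_{j=1}^{12}a_j D_j$ and impose the constraints forced by $\tilde{\Delta}(p)=0$ together with the formulas from the previous lemma. The equation $\Qd(l(p))=0$ yields one linear equation on the $a_j$; combined with the symmetry conclusions already drawn ($\Qd^{op}(t(p))=\Qd(t(p))$ and $\Qd^{op}(q(p))=\Qd(q(p))$), it forces $\Qd^{op}(l(p))=0$ as well. Further equations arise by projecting $\tilde{\Delta}(p)=0$ onto $\D^+\otimes\D^+$: since $\delta(t(p))=0$ and $\delta(q(p))$ is invariant under the twist (because $q(p)$ is a sum of products $D^{(2)}_i\cdot\tun$ whose coproduct is cocommutative in degree $\leq 2$), the antisymmetric component of $\delta(l(p))$ under $\tau$ must vanish. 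After substituting $q(p)=-\tfrac{1}{2}\,m\circ\delta(l(p))$ and $t(p)=k\,\tun\tun\tun$ and rewriting everything in the basis $(D_1,\dots,D_{12})$, the resulting linear system cuts out a $9$-dimensional subspace; I would then match a basis of that kernel with the listed $U_1,\dots,U_9$.

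Linear independence of $U_1,\dots,U_9$ can be read off from the supports of their expressions by a pivot argument: the diagram $\diagramme{3}{}{{1/0,2/1,3/1}}{}{}{}{}{}$ occurs only in $U_2$, the diagram $\diagramme{3}{}{{1/3,2/3,3/0}}{}{}{}{}{}$ only in $U_4$, $\diagramme{3}{}{{1/0,2/3,3/0}}{}{}{}{}{}$ only in $U_8$, $Y_3=\diagramme{3}{}{{1/2,2/3,3/0}}{}{}{}{}{}$ only in $U_9$, and analogous unique diagrams in $U_1,U_3,U_5,U_6,U_7$ supply the remaining pivots.

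The main obstacle is bookkeeping rather than conceptual: twelve diagrams, each producing up to eight coproduct terms whose coefficients involve powers of the parameter $x$ through the exponents $k_C(D_j)$, combine into a rather large linear system. Tracking the $x$-dependence consistently — in particular, verifying that the apparently rational identifications of the $U_i$ with a basis of the kernel are valid for every $x\in\K$ — is the easiest place to make an error, and the calculation is best confirmed with the aid of a computer algebra system.
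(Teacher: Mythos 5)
The paper states this lemma without any proof, so there is nothing to compare your argument against; your direct verification --- enumerate the twelve degree-$3$ diagrams, expand their coproducts over the eight subsets of chords, and cut out the solution space --- is the natural and essentially the only route, and you have identified exactly the right constraints: since $r_{C}(D)$ is always a single diagram, $\Qd^{op}(l)=0$ automatically, and one checks that the single condition $\Qd(l)=0$ together with the three conditions expressing the $\tau$-symmetry of $\delta(l)$ are precisely what is needed for $l$ to extend to a primitive element, $q(p)$ and $t(p)$ being then recovered from $l(p)$. Two small points in your independence argument deserve care. First, the pivot you choose for $U_{2}$, namely \ud{0.5}$\diagramme{3}{}{{1/0,2/1,3/1}}{}{}{}{}{}$\ud{0.7}, has coefficient $x$ in $U_{2}$ and so fails at $x=0$; the diagram \ud{0.5}$\diagramme{3}{}{{1/0,2/1,3/2}}{}{}{}{}{}$\ud{0.7} also occurs only in $U_{2}$, with coefficient $1$, and should be used instead so that independence holds for every $x\in\K$. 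Second, $U_{3}$, $U_{6}$ and $U_{7}$ contain no diagram unique to them, so the \enquote{analogous unique diagrams} you invoke do not exist for those three vectors; independence still holds, but by cascading the elimination --- the unique supports of $U_{1},U_{2},U_{4},U_{5},U_{8},U_{9}$ kill the corresponding coefficients, after which the coefficients of \ud{0.5}$\diagramme{3}{}{{1/0,2/0,3/2}}{}{}{}{}{}$, $\diagramme{3}{}{{1/2,2/0,3/0}}{}{}{}{}{}$ and $\diagramme{3}{}{{1/3,2/1,3/0}}{}{}{}{}{}$\ud{0.7} force the remaining three to vanish --- rather than by one dedicated pivot per vector. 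With these adjustments the plan is sound, and, as you say, the remaining work is bookkeeping best checked by machine.
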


\begin{prop}
	A basis of  $\Prim{\Hd}{3}$ is given by the vector family  $\left\{V_{1},\dots,V_{9}\right\}$ where
	\begin{align*}
	V_{1}=&\diagramme{3}{}{{1/0,2/1,3/0}}{}{}{}{}{}-x\diagramme{3}{}{{1/2,2/0,3/0}}{}{}{}{}{}+x\diagramme{3}{}{{1/0,2/0,3/2}}{}{}{}{}{}-(1+x)\diagramme{2}{}{{1/0,2/1}}{}{}{}{}{}\diagramme{1}{}{{1/0}}{}{}{}{}{}-(1+x^{2})\diagramme{2}{}{{1/0,2/0}}{}{}{}{}{}\diagramme{1}{}{{1/0}}{}{}{}{}{}\\
	+&x\diagramme{2}{}{{1/2,2/0}}{}{}{}{}{}\diagramme{1}{}{{1/0}}{}{}{}{}{}+(x^{2}+1)\diagramme{1}{}{{1/0}}{}{}{}{}{}\diagramme{1}{}{{1/0}}{}{}{}{}{}\diagramme{1}{}{{1/0}}{}{}{}{}{},\\  
	V_{2}=&\diagramme{3}{}{{1/0,2/1,3/2}}{}{}{}{}{}-x^{2}\diagramme{3}{}{{1/2,2/0,3/0}}{}{}{}{}{}+x\diagramme{3}{}{{1/0,2/1,3/1}}{}{}{}{}{}-(1+x)^{2}\diagramme{2}{}{{1/0,2/1}}{}{}{}{}{}\diagramme{1}{}{{1/0}}{}{}{}{}{}+x(x-1)\diagramme{2}{}{{1/0,2/0}}{}{}{}{}{}\diagramme{1}{}{{1/0}}{}{}{}{}{}\\
	+&x^{2}\diagramme{2}{}{{1/2,2/0}}{}{}{}{}{}\diagramme{1}{}{{1/0}}{}{}{}{}{}+\frac{x^{3}-x^{2}+5x+1}{3}\diagramme{1}{}{{1/0}}{}{}{}{}{}\diagramme{1}{}{{1/0}}{}{}{}{}{}\diagramme{1}{}{{1/0}}{}{}{}{}{},\\  
	V_{3}=&\diagramme{3}{}{{1/0,2/0,3/2}}{}{}{}{}{}-\diagramme{3}{}{{1/0,2/0,3/0}}{}{}{}{}{}-\diagramme{2}{}{{1/0,2/1}}{}{}{}{}{}\diagramme{1}{}{{1/0}}{}{}{}{}{}-(x-2)\diagramme{2}{}{{1/0,2/0}}{}{}{}{}{}\diagramme{1}{}{{1/0}}{}{}{}{}{}+(x-1)\diagramme{1}{}{{1/0}}{}{}{}{}{}\diagramme{1}{}{{1/0}}{}{}{}{}{}\diagramme{1}{}{{1/0}}{}{}{}{}{},\\  
	V_{4}=&\diagramme{3}{}{{1/2,2/0,3/2}}{}{}{}{}{}-\diagramme{3}{}{{1/3,2/3,3/0}}{}{}{}{}{}-\diagramme{2}{}{{1/0,2/1}}{}{}{}{}{}\diagramme{1}{}{{1/0}}{}{}{}{}{}+\diagramme{2}{}{{1/0,2/0}}{}{}{}{}{}\diagramme{1}{}{{1/0}}{}{}{}{}{}-(x-1)\diagramme{2}{}{{1/2,2/0}}{}{}{}{}{}\diagramme{1}{}{{1/0}}{}{}{}{}{}\\
	+&(x-1)\diagramme{1}{}{{1/0}}{}{}{}{}{}\diagramme{1}{}{{1/0}}{}{}{}{}{}\diagramme{1}{}{{1/0}}{}{}{}{}{},\\  
	V_{5}=&\diagramme{3}{}{{1/0,2/3,3/1}}{}{}{}{}{}-(1+x)\diagramme{3}{}{{1/0,2/0,3/0}}{}{}{}{}{}+\diagramme{3}{}{{1/2,2/0,3/2}}{}{}{}{}{}-2\diagramme{2}{}{{1/0,2/1}}{}{}{}{}{}\diagramme{1}{}{{1/0}}{}{}{}{}{}+2(1+x)\diagramme{2}{}{{1/0,2/0}}{}{}{}{}{}\diagramme{1}{}{{1/0}}{}{}{}{}{}\\
	-&(1+x)\diagramme{2}{}{{1/2,2/0}}{}{}{}{}{}\diagramme{1}{}{{1/0}}{}{}{}{}{},\\  
	V_{6}=&\diagramme{3}{}{{1/3,2/1,3/0}}{}{}{}{}{}-\diagramme{3}{}{{1/2,2/0,3/2}}{}{}{}{}{},\\  
	V_{7}=&\diagramme{3}{}{{1/2,2/0,3/0}}{}{}{}{}{}-\diagramme{3}{}{{1/3,2/1,3/0}}{}{}{}{}{}+\diagramme{2}{}{{1/0,2/1}}{}{}{}{}{}\diagramme{1}{}{{1/0}}{}{}{}{}{}-\diagramme{2}{}{{1/0,2/0}}{}{}{}{}{}\diagramme{1}{}{{1/0}}{}{}{}{}{}+(x-1)\diagramme{2}{}{{1/2,2/0}}{}{}{}{}{}\diagramme{1}{}{{1/0}}{}{}{}{}{}\\
	-&(x-1)\diagramme{1}{}{{1/0}}{}{}{}{}{}\diagramme{1}{}{{1/0}}{}{}{}{}{}\diagramme{1}{}{{1/0}}{}{}{}{}{},\\  
	V_{8}=&\diagramme{3}{}{{1/0,2/3,3/0}}{}{}{}{}{}-2\diagramme{2}{}{{1/0,2/0}}{}{}{}{}{}\diagramme{1}{}{{1/0}}{}{}{}{}{}-\diagramme{2}{}{{1/2,2/0}}{}{}{}{}{}\diagramme{1}{}{{1/0}}{}{}{}{}{}+2\diagramme{1}{}{{1/0}}{}{}{}{}{}\diagramme{1}{}{{1/0}}{}{}{}{}{}\diagramme{1}{}{{1/0}}{}{}{}{}{},\\
	V_{9}=&\diagramme{3}{}{{1/2,2/3,3/0}}{}{}{}{}{}-3\diagramme{2}{}{{1/2,2/0}}{}{}{}{}{}\diagramme{1}{}{{1/0}}{}{}{}{}{}+2\diagramme{1}{}{{1/0}}{}{}{}{}{}\diagramme{1}{}{{1/0}}{}{}{}{}{}\diagramme{1}{}{{1/0}}{}{}{}{}{}.
	\end{align*}
\end{prop}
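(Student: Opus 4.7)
My plan is to combine the two preceding lemmas into a constructive procedure that uniquely extends each of the linear representatives $U_1,\dots,U_9$ to a primitive element of degree $3$, and then verify the extended vectors are the $V_1,\dots,V_9$ listed.

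First I would use the first lemma as a reduction: any degree-$3$ primitive $p$ decomposes as $p=l(p)+q(p)+t(p)$ with the three constraints $\Qd(l(p))=0$, $m\circ\delta(l(p))=-2q(p)$, $t(p)\in\K\cdot\diagramme{1}{n}{{1/0}}{}{}{}{}{}\diagramme{1}{n}{{1/0}}{}{}{}{}{}\diagramme{1}{n}{{1/0}}{}{}{}{}{}$, and $m\circ\Qd(t(p))=-3t(p)$. The second lemma then identifies a $9$-dimensional candidate space for $l(p)$ spanned by the independent vectors $U_1,\dots,U_9$. Consequently, to exhibit a basis of $\Prim{\Hd}{3}$ it suffices to show that each $U_i$ extends uniquely to a primitive element $V_i$, since the linear independence of the $U_i$'s forces the corresponding independence of the $V_i$'s, and any primitive $p$ has $l(p)$ in $\mathrm{span}(U_i)$ hence differs from the corresponding combination of $V_i$'s by an element whose $l$-, $q$- and $t$-parts all vanish (i.e. by zero).

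The construction step is then essentially algorithmic. For each $i$, I would first compute $\delta(U_i)$ using the selection-quotient coproduct on the three chord-diagrams that make up $U_i$, which yields a parameter-dependent element of $\D^+\ot\D^+$. Setting $q_i:=-\tfrac{1}{2}\,m\circ\delta(U_i)$ automatically fulfils the middle constraint from the first lemma. Next I compute $\Qd(U_i+q_i)$ on the component $\diagramme{1}{n}{{1/0}}{}{}{}{}{}\diagramme{1}{n}{{1/0}}{}{}{}{}{}\ot\diagramme{1}{n}{{1/0}}{}{}{}{}{}$; this scalar, divided by $-3$, determines the coefficient $k_i$ in $t_i=k_i\,\diagramme{1}{n}{{1/0}}{}{}{}{}{}\diagramme{1}{n}{{1/0}}{}{}{}{}{}\diagramme{1}{n}{{1/0}}{}{}{}{}{}$. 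Putting $V_i:=U_i+q_i+t_i$ produces the candidate primitive, whose explicit expansion in terms of the basis of dissection diagrams gives the formulas in the statement.

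Finally, I would verify primitivity of each $V_i$ by directly checking that $\tilde{\De}(V_i)=0$, working component-by-component in $\D^+\ot\D^+$, $(\D^+)^2\ot\D^+$, $\D^+\ot(\D^+)^2$ and $\diagramme{1}{n}{{1/0}}{}{}{}{}{}\diagramme{1}{n}{{1/0}}{}{}{}{}{}\ot\diagramme{1}{n}{{1/0}}{}{}{}{}{}$, using cocommutativity of the opposite components once the linear part has been pinned down. The main obstacle is purely calculational: each $\delta(U_i)$ requires enumerating all $1$-, $2$- and $3$-chord subsets of several degree-$3$ dissection diagrams, counting the changes-of-direction $k_C(D)$ correctly so as to track the $x$-powers, and carefully identifying the resulting degree-$2$ diagrams with one of the three canonical diagrams $\diagramme{2}{n}{{1/2,2/0}}{}{}{}{}{}$, $\diagramme{2}{n}{{1/0,2/0}}{}{}{}{}{}$, $\diagramme{2}{n}{{1/0,2/1}}{}{}{}{}{}$ after contracting sides in $\Sp_C^+$. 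Once these computations are organised into a table, the coefficients in $V_1,\dots,V_9$ fall out mechanically, and the primitivity check becomes a routine bookkeeping exercise.
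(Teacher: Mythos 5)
Your proposal is correct and follows exactly the route the paper sets up: the two preceding lemmas reduce the problem to extending each $U_{i}$ by $q_{i}=-\tfrac{1}{2}m\circ\delta(U_{i})$ and a cube term fixed by the quadratic constraint, and the paper leaves the resulting coproduct computations implicit just as you describe. The only point worth keeping explicit, which you do address, is the final direct check that $\tilde{\De}(V_{i})=0$, since the lemmas only give necessary conditions on $l(p)$, $q(p)$, $t(p)$ and do not by themselves guarantee that every element of $\mathrm{span}(U_{1},\dots,U_{9})$ actually lifts to a primitive.
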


The basis given before are compatible with formula (\ref{serieprimcolib}). In a joint work with Jean Fromentin, we compute the Hopf algebra $\Hd$ in {\verb!C++!}. With this computation it is possible to explain $\Hd$ is not cofree if $x=-1$. Indeed, in degree 5, if $x=-1$, the primitive space dimension equals  $187$ which is different from $185$ the dimension compatible with cofreedom. If $x\in\llbracket-10^4,10^4\rrbracket$ the dimensions of the primitive spaces of degree $d\in\llbracket1,4\rrbracket$ are compatible with cofreedom. In degree $5$, because of a long time of computation, we compute the dimension of the vector space for $x\in\llbracket-100,100\rrbracket$. Except for $x=-1$, dimensions founds all equal $185$.

\begin{conj}
	The Hopf algebra $\Hd$ is cofree, except if $x$ belongs to a countable subset of $\K$.
\end{conj}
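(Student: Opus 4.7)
The plan is to treat cofreedom as a generic condition in the parameter $x$. For each $x \in \K$, the reduced coproduct $\widetilde{\De}_x$ on $\Hd$ has structure constants polynomial in $x$, so for each $n \in \N$ the primitive subspace $\Prim{\Hd}{n}$ is the kernel of a linear operator $M_n(x)$ on $(\Hd)_n$ whose matrix entries, in the monomial basis of dissection diagrams, are polynomials in $x$. By upper semi-continuity of kernel dimensions, the function $p_n(x) = \dim \Prim{\Hd}{n}$ attains a minimum value $p_n^{\ast}$ on the complement of a proper algebraic subset $E_n \subset \K$ cut out by the simultaneous vanishing of finitely many minors of $M_n(x)$, and can only jump strictly upwards on $E_n$.

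The first step is to identify $p_n^{\ast}$ with the coefficient of $h^n$ in the series $1 - 1/F_{\Hd}$, i.e.\ the value forced by cofreedom via formula~(\ref{serieprimcolib}). The explicit bases of primitives exhibited above in degrees $2$ and $3$ already establish the match for generic $x$ in those degrees, and the computations of Mammez and Fromentin extend this up to degree $5$, where $x=-1$ is the only exceptional parameter detected. For each fixed $n$ this amounts to a finite symbolic rank calculation for $M_n(x)$ over $\K(x)$, which is in principle effective.

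The second and essential step is to upgrade dimension equality into an actual coalgebra isomorphism with a cofree coalgebra. The natural route is dual: $\Hd$ is cofree if and only if $\dual{\Hd} \cong \Ug{\D}$ is free associative, which by Poincar\'e-Birkhoff-Witt reduces to the Lie algebra underlying $\g{\D}$ being free. The strategy is to exhibit, for $x$ outside a suitable locus, an explicit family of elements of $\g{\D}$ with the correct degree-wise cardinality that is free as a Lie family; by semi-continuity of ranks, freeness at a single generic parameter then propagates to the complement of a further finite exceptional set per degree. The inserting process of chords and the pre-Lie structure on $\g{\D}$ developed in the second part of the paper, together with the morphism $\varphi : \Hgl \to \dual{\Hd}$, are the natural source of candidate generators in higher degree.

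The exceptional locus $E_n$ at degree $n$ is a proper algebraic subset of $\K$; assuming $\K$ infinite, each $E_n$ is finite, and $\bigcup_{n \in \N} E_n$ is countable, yielding the conjectured bad set. The main obstacle is the second step: equality of Hilbert series of primitives is necessary but not sufficient for cofreedom, and producing a free generating family of $\g{\D}$ of the right cardinality at each degree demands input beyond dimension counts. The result of the paper that the pre-Lie subalgebra generated by the degree-$1$ dissection diagram is \emph{not} free, combined with the conjectural description of the kernel of $\varphi$ in conjecture~\ref{conjnoyau}, warns that the candidate generators cannot be extracted from that subalgebra alone; a genuinely new construction in higher degrees, plausibly controlled by the triangular features visible in propositions~\ref{etapeun},~\ref{etapedeux} and~\ref{triangulaire}, will be required.
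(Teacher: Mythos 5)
This statement is a \emph{conjecture} in the paper: the author offers no proof, only evidence (explicit bases of $\Prim{\Hd}{2}$ and $\Prim{\Hd}{3}$ matching formula~(\ref{serieprimcolib}), and machine computations with J.~Fromentin up to degree $5$ detecting $x=-1$ as the only exceptional value in the ranges tested). Your proposal is likewise not a proof but a programme, and you say so yourself; the honest admission does not remove the gap. The sound part is the generic-parameter frame: the entries of the reduced coproduct in the diagram basis are polynomials in $x$, so $p_{n}(x)=\dim\Prim{\Hd}{n}$ is generically equal to some minimal value $p_{n}^{\ast}$ and exceeds it only on a finite subset $E_{n}$ of $\K$ (finite because $\K\supseteq\Q$ is infinite), and $\bigcup_{n}E_{n}$ is countable. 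But the conjecture then reduces entirely to the claim that $p_{n}^{\ast}$ equals the coefficient of $h^{n}$ in $1-1/F_{\Hd}$ for \emph{every} $n$, and for this you offer nothing beyond the paper's verifications for $n\leq 5$. That is precisely where the difficulty of the conjecture lives: no mechanism is proposed that would control $p_{n}^{\ast}$ for all $n$, and a priori nothing forbids $p_{n}^{\ast}$ from strictly exceeding the cofree prediction in some degree for all values of $x$ simultaneously, which would falsify the conjecture.

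Your assessment of where the obstacle sits is also misplaced, in a way worth correcting because it simplifies the plan. The ``second and essential step'' is in fact automatic. Since $\Hd$ is graded connected of finite type, $\Prim{\Hd}{n}$ is dual to the degree-$n$ indecomposables $Q_{n}$ of $\dual{\Hd}$; any graded lift $W$ of $Q=\bigoplus_{n}Q_{n}$ generates $\dual{\Hd}$, so $T(W)\twoheadrightarrow\dual{\Hd}$, and if $F_{\Hd}=1/(1-F_{\Prim{\Hd}{}})$ holds coefficientwise then this surjection is an isomorphism by dimension count, $\dual{\Hd}$ is free associative, and $\Hd$ is cofree. So equality of Hilbert series of primitives with the cofree prediction is not merely necessary but \emph{sufficient}; no explicit free Lie family in $\g{\D}$ needs to be exhibited, and the semi-continuity-of-freeness argument you sketch (which would anyway be delicate to formalize) is unnecessary. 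What remains, and what neither you nor the paper supplies, is the degree-by-degree dimension count for all $n$ at generic $x$; the non-freeness of the pre-Lie algebra generated by the degree-$1$ diagram (corollary~\ref{absliberte}) and conjecture~\ref{conjnoyau} give no leverage on it.
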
 

\subsection{The antipode computation}\label{renumcoproduititere}
We want to give a formula for antipode. We have to understand iterated coproduct \emph{ i.e.} morphisms $\De^{k}=(\De\otimes\underbrace{Id\ot\dots\ot Id}_{k-1 \text{ fois}})\circ\dots\circ(\De\ot Id)\circ\De$ where $k$ is a positive integer.
Let $D$ be a dissection diagram and let $C$ be a subset of $\Ch(D)$. By contracting $C$, we change labels in $q_{C}(D)$ and $r_{C}(D)$. So we have to define maps to change labels in a compatible way. As the coproduct is coassociative and we can contract $C$ without order it, it is sufficient to change labels in $q_{C}(D)$ where $C$ is a singleton.

So, let $D$ be a dissection diagram of degree $n$, let $a$ be a chord of $D$. The map we choose depends on the fact that  $q_{\{a\}}(D)$ is a dissection diagram or the disjoint union of two dissection diagrams. 
\begin{enumerate}
	\item We assume first $q_{\{a\}}(D)$ is a dissection diagram, so the chord $a$ connects the vertex $n$ and the root or there exists a non negative integer $i\in\llbracket0,n-1\rrbracket$  such that $a$ connects vertices $i$ and $i+1$. Let $s$ be the label of the chord $a$ given by its orientation. We define the map:
	\[N_{s,D} :\left\{\begin{array}{rcl} \llbracket1,n-1\rrbracket &\longrightarrow &\llbracket1,n\rrbracket \\ \alpha &\longrightarrow & \begin{cases}
	\alpha & \mbox{if } \alpha<s,\\
	\alpha+1 & \mbox{if } \alpha\geq s.
	\end{cases}. \end{array}\right.\]
	We decide to keep the same notation for $q_{\{a\}}(D)$ and for the dissection diagram $q_{\{a\}}(D)$ with new labels given by $N_{s,D}$.
	
	\item We assume now $q_{\{a\}}(D)$ is the disjoint union of two dissection diagrams  $D_{1}$ (whose side 0 is that of $D$) and 
	 $D_{2}$ with respective positive degree  $n_{1}$ and $n_{2}$. So we have two possibilities: either there exists a positive integer $j$ in $\llbracket2,n-1\rrbracket$ such that $a$ connects the vertex $j$ with the root, either there exists a positive integer $i$ in $\llbracket 1,n-2\rrbracket$ and another positive integer $u$ in $\llbracket 2,n-i\rrbracket$ such that $a$ connects vertices $i$ and $i+u$. Then there exists three cases: 
	 \begin{enumerate}
		\item The chord $a$ makes vertices $i$ and $i+u$ neighbours and $a$ is labeled by $i$ (so $i$ is positive). We have $n_{1}=n-u$ and $n_{2}=u-1$. We define the two following maps:
		\[N_{i,i+u,D_{1}} :\left\{\begin{array}{rcl} \llbracket1,n-u\rrbracket &\longrightarrow &\llbracket1,n\rrbracket \\ \alpha &\longrightarrow & \begin{cases}
		\alpha & \mbox{if } \alpha<i,\\
		\alpha+u & \mbox{if } \alpha\geq i
		\end{cases}. \end{array}\right.\]
		and
		\[N_{i,i+u,D_{2}} :\left\{\begin{array}{rcl} \llbracket1,u-1\rrbracket &\longrightarrow &\llbracket1,n\rrbracket \\ \alpha &\longrightarrow & \alpha+i. \end{array}\right.\]	
		
		\item The chord $a$ makes vertices $i$ and $i+u$ neighbours, $a$ is labelled by $i+u$ and $i$ is different from the root. We have $n_{1}=n-u$ and $n_{2}=u-1$. We define the two following maps:
		\[N_{i+u,i,D_{1}} :\left\{\begin{array}{rcl} \llbracket1,n-u\rrbracket &\longrightarrow &\llbracket1,n\rrbracket \\ \alpha &\longrightarrow & \begin{cases}
		\alpha & \mbox{if } \alpha\leq i,\\
		\alpha+u & \mbox{if } \alpha\geq i+1
		\end{cases}. \end{array}\right.\]
		and 
		\[N_{i+u,i,D_{2}} :\left\{\begin{array}{rcl} \llbracket1,u-1\rrbracket &\longrightarrow &\llbracket1,n\rrbracket \\ \alpha 
		&\longrightarrow & \alpha+i. \end{array}\right.\]
		
		\item The chord $a$ makes vertex $j$ and the root neighbours. As a consequence, $a$ is labelled  by $j$. We have $n_{1}=j-1$ and $n_{2}=n-j$. We define the two following maps:
		\[N_{0,j,D_{1}} :\left\{\begin{array}{rcl} \llbracket1,j-1\rrbracket &\longrightarrow &\llbracket1,n\rrbracket \\ \alpha &\longrightarrow & \alpha \end{array}\right.\]
		and 
		\[N_{0,j,D_{2}} :\left\{\begin{array}{rcl} \llbracket1,n-j\rrbracket &\longrightarrow &\llbracket1,n\rrbracket \\ \alpha &\longrightarrow & \alpha+u. \end{array}\right.\]
	\end{enumerate}
		We decide to keep the same notation for $q_{\{a\}}(D)$ and for the dissection diagram $q_{\{a\}}(D)$ with new labels given by	
	 $N_{i,i+u,D_{1}}$ and $N_{i,i+u,D_{2}}$ or, $N_{i+u,i,D_{1}}$ and $N_{i+u,i,D_{2}}$ or, $N_{0,j,D_{1}}$ and $N_{0,j,D_{2}}$ according to the label of the chord $a$.
\end{enumerate}

We consider now a dissection diagram $D$ of positive degree $n$ and $C=\{i_{1},\dots,i_{p}\}$ a nonempty subset of $\Ch(D)=\{1,\dots,n\}$. Contracting all chords in $C$ gives the same result as contracting in $D$ successively each chord. So, we keep the same notation for $q_{C}(D)$ and for the element  $q_{C}(D)$ with new labels, successively given by previous maps.

\begin{prop}
	Let $n$ be a positive integer and $D$ be a dissection diagram of degree $n$. The value of the antipode $S$ computed for $D$ is given by:  
	\[S(D)=\sum_{s=1}^{n}(-1)^{s}\sum_{P\in\Pi_{D}(s)}x^{k_{P}(D)}\prod_{i=1}^{s}r_{C_{i}}(q_{P_{i-1}}(D))\]
	where  
	\begin{itemize}
		\item[] $\Pi_{D}(s)$ is the set of $s$-tuples $(C_{1},\dots,C_{s})$ of no-empty sets which are a partition of $\Ch(D)$,
		\item[] $P_{0}=\emptyset$ for each $P$ in $\Pi_{D}(s)$,
		\item[] $P_{i}=\bigcup\limits_{u=1}^{i} C_{u}$  for each $P$ in $\Pi_{D}(s)$ and each integer $i$ in $\llbracket 1,s\rrbracket$,
		\item[] $k_{P}(D)=k_{C_{1}}(q_{P_{0}}(D))+\dots+k_{C_{s}}(q_{P_{s-1}}(D))$.
	\end{itemize} 
	
	If, for each non negative integer $p$, we denote by $X_{p}$ (respectively by $Y_{p}$) the corolla (respectively the path tree) of degree $n$, we have in particular:
	\begin{align*}
		S(X_{n})=&\sum_{k=1}^{n}(-1)^{k}\sum_{(\alpha_{1},\dots,\alpha_{k})\models n}\sum_{\substack{i_{j,0}+\dots+i_{j,p_{j}}=\alpha_{j} \\ p_{j}=\alpha_{j+1}+\dots+\alpha_{k} \\ j\in\{1,\dots,k-1\} \\ \forall m,~i_{u,m}\geq 0}} X_{\alpha_{k}}\prod_{j=1}^{k-1}X_{i_{j,0}}\dots X_{i_{j,p_{j}}},\\
		S(Y_{n})=&\sum_{k=1}^{n}(-1)^{k}\sum_{\alpha=(\alpha_{1},\dots,\alpha_{k})\models n}\frac{n!}{\alpha_{1}!\dots\alpha_{k}!}Y_{\alpha_{1}}\dots Y_{\alpha_{k}}\\
		=&\sum_{k=1}^{n}(-1)^{k}\sum_{\alpha=(\alpha_{1},\dots,\alpha_{k})\vdash n}\frac{n!}{\alpha_{1}!\dots\alpha_{k}!}\frac{k!}{u_{1}!\dots u_{s}!}Y_{\alpha_{u_{1}}}^{u_{1}}\dots Y_{\alpha_{u_{s}}}^{u_{s}},	
	\end{align*}
	where, for each $k$-tuple $(\alpha_{1},\dots,\alpha_{k})$, partition of $n$
	\begin{align*}
		s=&Card\{\alpha_{1},\dots,\alpha_{k}\},\\
		m_{1}=&\max\big\{i\in\llbracket 1,k\rrbracket, \forall j\leq i,\alpha_{j}=\alpha_{1}\big\},\\
		m_{l+1}=&\max\big\{i\in\llbracket 1,k\rrbracket, \forall m_{l}+1\leq j\leq i,\alpha_{j}=\alpha_{(u_{l}+1)}\big\} \text{ for } 
		0\leq l\leq s-1,\\
		u_{1}=&m_{1} \text{ and } u_{l+1}=m_{l+1}-m_{l}.
	\end{align*}
\end{prop}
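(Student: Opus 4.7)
The plan is to apply the standard Takeuchi formula for the antipode of a connected graded bialgebra, namely
\[
S(D) \;=\; \sum_{s=1}^{n} (-1)^{s}\, m^{(s-1)} \circ \tilde\Delta^{(s-1)}(D),
\]
where $\tilde\Delta$ is the reduced coproduct on the augmentation ideal and $m^{(s-1)}$ is the $(s-1)$-fold iterated product of $\Hd$. Since $\Hd$ is connected and graded and $D$ has degree $n$, the sum is finite, and only the bookkeeping of the iterated coproduct remains.

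The combinatorial heart of the proof is to identify, by induction on $s$, the iterated reduced coproduct with
\[
\tilde\Delta^{(s-1)}(D) \;=\; \sum_{(C_1,\dots,C_s)\in\Pi_D(s)} x^{k_P(D)}\; r_{C_1}(q_{P_0}(D)) \otimes r_{C_2}(q_{P_1}(D)) \otimes \cdots \otimes r_{C_s}(q_{P_{s-1}}(D)).
\]
The case $s=1$ reduces to $D$ itself (the only partition is $C_1=\Ch(D)$). For the induction step, I would use $\tilde\Delta^{(s-1)} = (\mathrm{Id}^{\otimes (s-2)}\otimes \tilde\Delta)\circ \tilde\Delta^{(s-2)}$ and compute $\tilde\Delta$ on the last factor $r_{C_{s-1}}(q_{P_{s-2}}(D))$ appearing in the inductive hypothesis: a nonempty proper subset of the chords of this selected diagram corresponds exactly to a splitting of the last block $C_{s-1}$ into two nonempty blocks $(C_{s-1}', C_s')$. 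Coassociativity of $\Delta$ guarantees that the resulting diagrams fit into the refined flag $P_0\subset P_1\subset \cdots \subset P_s$, and the relabeling maps $N_{s,D}$, $N_{i,i+u,D_j}$ and $N_{0,j,D_j}$ recalled just before the statement are precisely what is needed so that the iterated selection-quotient operations compose unambiguously. The powers of $x$ accumulate additively, producing $x^{k_P(D)}$.

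Once this description is available, the iterated product $m^{(s-1)}$ is the disjoint union (the product of $\Hd$ is commutative and associative), so the tensor in each summand collapses to $\prod_{i=1}^s r_{C_i}(q_{P_{i-1}}(D))$ and the announced formula for $S(D)$ follows. The two specializations then reduce to substitution. For the corolla $X_n$, any contraction of a subset of chords yields again a disjoint union of corollas and no change of orientation is ever needed (so $k_P(X_n)=0$); choosing an ordered partition with block sizes $(\alpha_1,\dots,\alpha_k)$ and recording, at each step, how the still-uncontracted chords distribute among the $p_j+1$ faces created gives the inner sum $\sum X_{i_{j,0}}\cdots X_{i_{j,p_j}}$. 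For the path tree $Y_n$, the coproduct $\Delta(Y_n)=\sum_k \binom{n}{k} Y_k \otimes Y_{n-k}$ is binomial, iterating it produces multinomial coefficients indexed by compositions $(\alpha_1,\dots,\alpha_k)\models n$ (yielding the first expression), and regrouping all compositions with the same underlying partition introduces the factor $k!/(u_1!\cdots u_s!)$ and gives the second expression.

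The main obstacle is the bookkeeping in the inductive identification of $\tilde\Delta^{(s-1)}$: one must check that $r_{C_i}(q_{P_{i-1}}(D))$ is well-defined, independently of the order in which the chords of $P_{i-1}$ are contracted, and that the relabeling conventions introduced at the beginning of this section are compatible with this order-independence. This is essentially a translation of the coassociativity of $\Delta$ into combinatorial language, and it is the reason why the $N_{\ast,D}$ maps were set up in the detail they were; once this is in hand, the formulas for $X_n$ and $Y_n$ are routine specializations.
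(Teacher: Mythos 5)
Your proof is correct and follows essentially the same route as the paper: the paper unfolds the recursion $S(D)=-\sum_{C\neq\emptyset}x^{k_{C}(D)}S(q_{C}(D))r_{C}(D)$ coming from $S\ast\mathrm{id}=\eta\varepsilon$, which is precisely Takeuchi's formula, and lands on the same sum over ordered set partitions of $\Ch(D)$, with the same remark that the relabelling maps make the iterated contractions well defined. The only cosmetic discrepancy is that your displayed expression for $\tilde\Delta^{(s-1)}(D)$ lists the tensor factors in the reverse of the order the iterated coproduct actually produces (the most-contracted piece occupies the leftmost slot, since the coproduct puts $q_{C}$ before $r_{C}$), but commutativity of $m$ and the symmetry of the index set make this immaterial for the final formula.
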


\begin{proof}
	Let $n$ be a positive integer and $D$ be a dissection diagram of degree $n$. For all integer $s$ in $\llbracket1,n-1\rrbracket$ we define  \[\tilde{\Pi}_{D}(s)=\{(C_{1},\dots,C_{s+1})\in\Ch(D)^{s+1},~(C_{1},\dots,C_{s})\in\Pi_{D}(s)\text{ or } (C_{1},\dots,C_{s+1})\in\Pi_{D}(s+1)\}.\] We use the fact that the antipode reverse the identity map for the convolution product and on each step we use the  dialing process described before. Then we have:
	\begin{align*}
	S(D)=&-\sum_{\substack{C_{1}\subset\Ch(D) \\ C_{1}\neq \emptyset}}x^{k_{C_{1}}(D)}S(q_{C_{1}}(D))r_{C_{1}(D)}\\
	=&\sum_{\substack{C_{1}\subset\Ch(D) \\ C_{1}\neq \emptyset}}x^{k_{C_{1}}(D)}\sum_{\substack{C_{2}\subset\Ch(D)\setminus C_{1} \\ C_{2}\neq \emptyset}}x^{k_{C_{2}}(q_{C_{1}}(D))}S(q_{C_{1}\cup C_{2}}(D))r_{C_{2}}(q_{C_{1}}(D))r_{C_{1}(D)}\\
	=&(-1)^{u}\sum_{P\in\tilde{\Pi}_{D}(u)}x^{k_{P}(D)}S(q_{P_{u}(D)})\prod_{i=1}^{u}r_{C_{i}}(q_{P_{i-1}}(D))\\
	=&\sum_{s=1}^{n}(-1)^{s}\sum_{P\in\Pi_{D}(s)}x^{k_{P}(D)}\prod_{i=1}^{s}r_{C_{i}}(q_{P_{i-1}}(D)).
	\end{align*}
\end{proof}

\Ex{}
\begin{align*}
	S\po\diagramme{3}{}{{1/0,2/3,3/1}}{}{}{}{}{}\pf=&-\diagramme{3}{}{{1/0,2/3,3/1}}{}{}{}{}{}+(1+x)\diagramme{1}{}{{1/0}}{}{}{}{}{}\diagramme{2}{}{{1/0,2/0}}{}{}{}{}{}+2\diagramme{1}{}{{1/0}}{}{}{}{}{}\diagramme{2}{}{{1/0,2/1}}{}{}{}{}{}+\diagramme{1}{}{{1/0}}{}{}{}{}{}\diagramme{2}{}{{1/2,2/0}}{}{}{}{}{}\\
	&-(2x+3)\diagramme{1}{}{{1/0}}{}{}{}{}{}\diagramme{1}{}{{1/0}}{}{}{}{}{}\diagramme{1}{}{{1/0}}{}{}{}{}{},\\
	S\po\diagramme{3}{}{{1/0,2/0,3/0}}{}{}{}{}{}\pf=&-\diagramme{3}{}{{1/0,2/0,3/0}}{}{}{}{}{}+5\diagramme{1}{}{{1/0}}{}{}{}{}{}\diagramme{2}{}{{1/0,2/0}}{}{}{}{}{}-5\diagramme{1}{}{{1/0}}{}{}{}{}{}\diagramme{1}{}{{1/0}}{}{}{}{}{}\diagramme{1}{}{{1/0}}{}{}{}{}{}.
\end{align*}

\Rq{1} Considering the Faà di Bruno Hopf algebra as the Hopf algebra of coordinate of the group of formal diffeomorphims $G_{1}$, for each positive integer $n$, the antipode $S$ computes in the coordinate map  $\Sigma_{n}$ is given by: \[\forall P\in G_{1},~S(\Sigma_{n})(P)=\Sigma_{n}(P^{-1})\]
where $P^{-1}$ is the reverse of $P$ for the formal series composition \cite[section 4.3]{Frabetti2015}.

\section{Dissection diagrams, rooted trees, theorem of Oudom and Guin}
The aim of this section is to study $\Hd$ by using rooted trees Hopf algebras. We can easily associate a rooted tree to any dissection diagram. It is sufficient to consider the underlying non planar rooted tree of its regular $(n+1)$-gone. It may seem better to consider the underlying planar rooted tree of its regular $(n+1)$-gone, but the non planar case allows to conclude. We have to consider a Hopf algebra of rooted trees graded by the number of edges and not the number of vertices. We can think about the Hopf algebra of rooted trees with the contraction coproduct introduced in \cite{Calaque2011}. Unfortunately, associating a dissection diagram with its underlying rooted tree is not a good idea, because this kind of morphisms doesn't respect the coalgebra structure. %The given counter-example is a counter-example too for the planar case.

It is wiser to be interested in the graded dual $\dual{\Hd}$ of the Hopf algebra of dissection diagrams. Dupont makes allusion to a pre-Lie structure over its primitive element space \cite[remark 2.1.15]{Dupont2014}. As $\dual{\Hd}$ and the enveloping algebra of its primitive elements $\g{\D}$ are isomorphic, it is possible to apply the theorem of  Oudom and Guin \cite[proposition 2.7 - théorème 2.12]{Oudom2008}. We build the unique pre-Lie morphism $\gamma$ from the free pre-Lie algebra $\g{\TR}$ generated by $t=\tun$ to $\g{\D}$ sending $t$ to the dissection diagram of degree 1. We can deduce that the pre-Lie algebra generated by the dissection diagram of degree 1 is not free and do not generate the whole pre-Lie algebra $\g{\D}$.We extand $\gamma$ as a morphism of Hopf algebras and give a conjecture about its kernel. 

\subsection{pre-Lie algebras and theorem of Oudom and Guin}
Pre-Lie algebras (also called Vinberg algebras) were introduced in 1963 by Vinberg \cite{Vinberg1963} in the theory of homogeneous convex cones and by Gerstenhaber \cite[section 2]{Gerstenhaber1963} in deformation theory. The $\PreLie$ operad defining pre-Lie algebras was introduced and described by Chapoton and Livernet \cite[theorem 1.9]{Chapoton2001}. They describe the free pre-Lie algebra generated by one or several generators in terms of rooted trees too \cite[corollaire 1.10]{Chapoton2001}. It is another way to prove the isomorphism  between the dual of the Connes and Kreimer Hopf algebra \cite{Connes1999} and the Grossman and Larson Hopf algebra \cite{Grossman1989}. Foissy proves that the free pre-Lie algebra with one generator is free as a Lie algebra \cite[theorem 8.4]{Foissy2002c}. Chapoton, thanks to  operad theory, proves this result for each free pre-Lie algebra	\cite[corollary 5.3]{Chapoton2010}. He proves that the operad $\PreLie$ is anticyclic \cite{Chapoton2005}. Livernet \cite[theorem 3.4]{Livernet2006} determines the freedom of \enquote{Hopf pre-Lie algebras} equipped the relation $\De(x\circ y)=\De(x)\circ y+ x\ot y =x^{(1)}\circ y\ot x^{(2)} + x^{(1)}\ot x^{(2)}\circ y + x\ot y$. Oudom and Guin \cite[proposition 2.7 - theorem 2.12]{Oudom2008}  build for each pre-Lie algebra $\g{}$ a Hopf algebra which is isomorphic to $\Ug{}$. It is a generalization of the construction of the enveloping algebra of the free pre-Lie algebra of rooted trees generated by $\tun$ (Grossman and Larson Hopf algebra). Loday and Ronco \cite[theorems 5.3 and 5.8]{Loday2010} explain that there exists a pre-Lie structure for free-commutative right-sided combinatorial Hopf algebras. It is possible to equip an operad with a pre-Lie structure.  Burgunder, Delcroix-Oger and Manchon \cite[theorem 3.1]{Burgunder170} attest that an operad cannot be free as pre-Lie algebra.
New structures can be defined too. Mansuy builds the quadratic operad $\Com-\PreLie$  and gives as example the rooted trees algebra equipped with the grafting product and the grafting product on the root \cite[section 4.2]{Mansuy2014}. Foissy \cite[definition 17 and theorem 20]{Foissy2015a} explains the free Com-PreLie algebra with one generator as the partitioned trees algebra equipped with the disjoint union product and with the grafting product. As another structure, we can notice the quadratic operad $\PostLie$ introduced by Vallette \cite[section A.2]{Vallette2007}. Une post-Lie algebra $\mathcal{A}$ is equipped with a binary map $\circ$ and with a Lie bracket $\{-,-\}$ which are compatible. If $(\mathcal{A},\{-,-\})$ is abelian then $(\mathcal{A},\circ)$ is a pre-Lie algebra. The post-Lie algebra notion is a generalization of the pre-Lie algebra notion. To fill up knowledge about origins and applications about pre-Lie algebra, there exists the survey of Burde \cite{Burde2006}.

\begin{defi}
	A left pre-Lie algebra is couple $(\g{},\circ)$ where $\g{}$ is a vector space and $\circ:	\g{}\otimes\g{} \longrightarrow \g{}$ is an internal product with the following relation: for all $x,y,z\in\g{}$,
	\[x\circ (y\circ z)-(x\circ y)\circ z=y\circ (x\circ z)-(y\circ x)\circ z.\]
\end{defi}
\Ex{1} We consider the classical example of $\g{}=\{P(X)\partial, P(X)\in\K[X]\}$, the derivation algebra of $\K[X]$, where $\partial$ is the derivation sending $X$ to $1$. We define the product $\circ$ by 
\[\circ:\left\{
\begin{array}{rcl}
\g{}\ot\g{} &\longrightarrow & \g{} \\
P(X)\partial\ot Q(X)\partial &\longrightarrow & (P(X)\partial Q(X))\partial.\end{array}\right.\]
The product of two derivations $P(X)\partial$ and $Q(X)\partial$ is not the usual composition of maps ($\g{}$ is not stable under this product) but it is the unique derivation sending $X$ to $P(X)Q'(X)$. Let $P(X)$, $Q(X)$ and $R(X)$ be polynomials. We have 
\[P(X)\partial\circ (Q(X)\partial\circ R(X)\partial)-(P(X)\partial\circ Q(X)\partial)\circ R(X)\partial=P(X)Q(X)\partial^{2}R(X)\partial.\]
This relation is symmetric over $P(X)$ and $Q(X)$ so the pre-Lie relation is satisfied.
\begin{prop}
	Let $(\g{,\circ})$ be a pre-Lie algebra. We define the bracket $\{-,-\}$ by:
	\[\{-,-\}:\left\{
	\begin{array}{rcl}
	\g{}\ot\g{} &\longrightarrow & \g{} \\
	x\ot y &\longrightarrow & x\circ y -y\circ x. 
	\end{array}\right.\]
	With this bracket, $\g{}$ is a Lie algebra denoted by $\g{Lie}$. 
\end{prop}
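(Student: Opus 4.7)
The plan is to verify the two Lie algebra axioms for the bracket $\{x,y\} = x\circ y - y\circ x$: antisymmetry and the Jacobi identity. Bilinearity is inherited from the bilinearity of $\circ$, so only those two need attention.

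Antisymmetry is immediate from the definition: $\{y,x\} = y\circ x - x\circ y = -\{x,y\}$, so no work is required.

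For the Jacobi identity, the idea is to introduce the associator $[x,y,z] := x\circ(y\circ z) - (x\circ y)\circ z$ and rewrite the pre-Lie axiom as the symmetry relation $[x,y,z] = [y,x,z]$. Then I expand $\{x,\{y,z\}\} + \{y,\{z,x\}\} + \{z,\{x,y\}\}$ by plugging in the definition of the bracket at both levels. Each term of the form $\{u, v\circ w - w\circ v\}$ produces four monomials involving a double composition; using the associator to move between $(u\circ v)\circ w$ and $u\circ(v\circ w)$, I can group the resulting $12$ monomials into two kinds: six terms of the form $u\circ(v\circ w)$ and six terms of the form $[u,v,w]$.

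The main (and only) computational step is to check that the six $u\circ(v\circ w)$ terms cancel in pairs across the three cyclic summands, so that the total reduces to
\begin{equation*}
\bigl([x,y,z]-[y,x,z]\bigr) + \bigl([y,z,x]-[z,y,x]\bigr) + \bigl([z,x,y]-[x,z,y]\bigr).
\end{equation*}
Each parenthesized difference vanishes by the pre-Lie relation $[x,y,z] = [y,x,z]$, so the full cyclic sum is zero and the Jacobi identity holds. No step is conceptually difficult; the only thing to be careful about is bookkeeping of signs when expanding the six double brackets, which is why rewriting the axiom in terms of the symmetry of the associator is worthwhile.
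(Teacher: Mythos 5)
Your proof is correct and is exactly the direct verification the paper invokes (its proof reads simply \enquote{Direct.}): antisymmetry is immediate, and the Jacobi identity reduces, after expanding the twelve monomials and rewriting left-associated products via the associator, to the three differences $[x,y,z]-[y,x,z]$, $[y,z,x]-[z,y,x]$, $[z,x,y]-[x,z,y]$, each of which vanishes by the left pre-Lie relation. Nothing is missing; you have merely written out the bookkeeping the author left implicit.
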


\begin{proof} Direct.
\end{proof}

\begin{defi}\label{defOG}
	Let $(\g{},\circ)$ a pre-Lie algebra. We consider the Hopf symmetric algebra $\sym{\g{}}$ equipped with its usual coproduct $\De$.  We extend the product $\circ$ to $\sym{\g{}}$. Let  $a$, $b$, $c$ and $x$ be elements such that $a,b,c\in\sym{\g{}}$ and $x\in\g{}$. We put: 
	\[\left\{
	\begin{array}{rcl}
	1\circ a&=&a,\\
	a\circ 1&=&\varepsilon(a)1,\\
	(xa)\circ b&=&x\circ(a\circ b)-(x\circ a)\circ b,\\
	a\circ(bc)&=&\sum\limits_{a}(a^{(1)}\circ b)(a^{(2)}\circ c).
	\end{array}\right.\]
	On $\mathcal{S}(\g{})$, we define a product $\star$ by:
	\[\star\left\{
	\begin{array}{rcl}
	\mathcal{S}(\g{})\ot\mathcal{S}(\g{}) &\longrightarrow & \mathcal{S}(\g{}) \\
	a\ot b &\longrightarrow &a\star b=\sum\limits_{a}a^{(1)}(a^{(2)}\circ b).
	\end{array}\right.\]
\end{defi}

\begin{theo}\label{structureOG}
	The space $(\sym{\g{}},\star,\De)$ is a Hopf algebra which is isomorphic to the enveloping Hopf algebra $\Ug{Lie}$ of the Lie algebra $\g{Lie}$ generated by primitive elements.
\end{theo}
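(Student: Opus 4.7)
The plan is to proceed in three main stages, following the strategy of Oudom and Guin.

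First, I would check that the extension of $\circ$ from $\g{}\otimes\g{}$ to $\sym{\g{}}\otimes\sym{\g{}}$ prescribed in Definition \ref{defOG} is well-defined. The four rules given are redundant when the left argument has length $\geq 2$ or the right argument is a nontrivial product, and one must verify that the different ways of reducing $(x_{1}\cdots x_{n})\circ(y_{1}\cdots y_{m})$ to the action of $\g{}$ on $\g{}$ yield the same result. I would proceed by induction on $n+m$, with the pre-Lie axiom on $\g{}$ furnishing exactly the compatibility needed in the base case $n=2$, $m=1$. A convenient auxiliary fact to establish along the way is that $\Delta$ is still a morphism of the extended product~$\circ$, namely $\Delta(u\circ v)=\sum (u^{(1)}\circ v^{(1)})\otimes (u^{(2)}\circ v^{(2)})$, which follows by induction using cocommutativity of $\Delta$.

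Second, I would prove that $(\sym{\g{}},\star,\De)$ is a bialgebra. The coproduct $\De$ is already an algebra morphism for the commutative product of $\sym{\g{}}$; using the identity for $\Delta(u\circ v)$ above together with the definition $a\star b=\sum a^{(1)}(a^{(2)}\circ b)$, a direct computation shows $\De(a\star b)=\De(a)\star\De(b)$. The core technical point is associativity of $\star$. I would first prove the generalized pre-Lie identity in $\sym{\g{}}$, namely that
\[
(a\circ b)\circ c-a\circ(b\circ c)
\]
is symmetric in $b$ and $c$, by induction on the length of $a$ using the recursive rule $(xa)\circ b=x\circ(a\circ b)-(x\circ a)\circ b$. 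Associativity of $\star$ then follows by unwinding both $(a\star b)\star c$ and $a\star(b\star c)$ via the definition and the rule $a\circ(bc)=\sum(a^{(1)}\circ b)(a^{(2)}\circ c)$, matching the resulting sums term by term. Existence of the antipode is automatic since the bialgebra is graded and connected (by the symmetric degree).

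Third, I would identify $(\sym{\g{}},\star,\De)$ with $\Ug{Lie}$. The primitive elements of $(\sym{\g{}},\De)$ are exactly $\g{}$, and for $x,y\in\g{}$ we have $\Delta(x)=x\otimes 1+1\otimes x$, so
\[
x\star y=x\,(1\circ y)+1\,(x\circ y)=xy+x\circ y,
\]
whence $[x,y]_{\star}=x\circ y-y\circ x=\{x,y\}$. The inclusion $\g{}\hookrightarrow\sym{\g{}}$ is therefore a Lie morphism from $\g{Lie}$ into $(\sym{\g{}},\star)$, and the universal property of the enveloping algebra yields a unique algebra morphism $\Phi\colon \Ug{Lie}\to(\sym{\g{}},\star)$ extending it. Since both source and target are cocommutative and generated as algebras by the primitive elements on which $\Phi$ is the identity, $\Phi$ is automatically a bialgebra morphism. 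Bijectivity then follows from a PBW-type argument: filtering both sides by degree in $\g{}$, the associated graded of $\Ug{Lie}$ is $\sym{\g{}}$, and the associated graded of $\Phi$ is the identity (since $x_{1}\star\cdots\star x_{n}=x_{1}\cdots x_{n}+\text{lower degree}$ for $x_{i}\in\g{}$).

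The main obstacle will be the associativity of $\star$, together with the generalized pre-Lie identity in $\sym{\g{}}$ that precedes it: both require careful Sweedler-notation inductions where the pre-Lie axiom is invoked repeatedly and the cocommutativity of $\De$ is used to symmetrize. The well-definedness of the extended $\circ$ is of the same nature and should be handled first so that all subsequent manipulations are unambiguous.
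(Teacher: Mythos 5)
Your outline is correct, but it cannot be compared step-by-step with the paper, because the paper does not actually prove this theorem: its ``proof'' is a one-line citation of Oudom and Guin \cite[proposition 2.7 -- theorem 2.12]{Oudom2008}. What you have written is essentially a reconstruction of the argument of that reference, and it follows the right strategy: (i) well-definedness of the extension of $\circ$ to $\sym{\g{}}$ and the compatibility $\De(u\circ v)=\sum (u^{(1)}\circ v^{(1)})\ot(u^{(2)}\circ v^{(2)})$; (ii) the bialgebra axioms for $(\sym{\g{}},\star,\De)$, with associativity of $\star$ as the only delicate point; (iii) the identification with $\Ug{Lie}$ via the computation $x\star y=xy+x\circ y$, the universal property, and a PBW/filtration argument (legitimate here since $\K$ has characteristic $0$). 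The one place where your sketch could be tightened is step (ii): the cleanest route to associativity in Oudom--Guin is the single identity $(a\star b)\circ c=a\circ(b\circ c)$ for all $a,b,c\in\sym{\g{}}$, proved by induction on the length of $a$; associativity of $\star$ then falls out in two lines from the definition $a\star b=\sum a^{(1)}(a^{(2)}\circ b)$ and coassociativity, whereas deriving it from the symmetry of $(a\circ b)\circ c-a\circ(b\circ c)$ in $b$ and $c$ requires an additional term-matching argument that you would still have to carry out. With that adjustment your plan is exactly the proof the paper delegates to the literature.
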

\begin{proof}
	This theorem was proved by Oudom and Guin in \cite[proposition 2.7 - theorem 2.12]{Oudom2008}.
\end{proof}

\subsection{Hopf algebras of rooted trees}

\subsubsection{Hopf algebra $\Hgl$ of Grossman and Larson rooted trees}
The Grossman and Larson Hopf algebra, written  $\Hgl$, was introduced in \cite{Grossman1989} as a tool in the theory of differential operators \cite{Grossman1990,Grossman1992}. It is graded, connected, cocommutative and not commutative. By the \CQMM{}, it is isomorphic  to the enveloping algebra of its primitive elements. Panaite \cite{Panaite2000} proves that there exists a connection between $\Hgl$ and the graded dual $\dual{\Hc}$ of the Connes and Kreimer Hopf algebra \cite{Connes1999}. He is corrected by 
Hoffman \cite{Hoffman2003}. The two Hopf algebras are not equal but isomorphic in characteristic $0$. In $\dual{\Hc}$, because of grafts, there are symmetry coefficients which don't appear in $\Hgl$. Chapoton and Livernet \cite[corollary 1.10]{Chapoton2001} prove that the pre-Lie algebra of rooted trees in Grossman and Larson Hopf algebra is free and give another proof of the isomorphism between $\Hgl$ and $\dual{\Hc}$.  Oudom and Guin \cite[proposition 2.7 - theorem 2.12]{Oudom2008} use the $\Hgl$ case as model to prove a structural theorem for the enveloping algebra of a pre-Lie algebra.

We recall the definition of $\Hgl$ with the point of vue of Oudom and Guin. Let $\g{\TR}$ the vector space $\g{\TR}=Vect(t,~t\in\TR)$.  

\begin{defi} On $\g{\TR}$ we define the following product:
	\[\circ:\left\{\begin{array}{rcl}\g{\TR}\ot\g{\TR} &\longrightarrow & \g{\TR} \\ t_{1}\ot t_{2} &\longrightarrow & \sum\limits_{s\in V(t_{2})} \tau_{t_{1},t_{2},s} \end{array}\right.\]
	where $\tau_{t_{1},t_{2},s}$ is the rooted tree obtained by grafting $t_{1}$ on the vertex $s$ of $t_{2}$.
\end{defi}

\begin{theo}\label{prelielibreun}
	The algebra $(\g{\TR},\circ)$ is the free left pre-Lie algebra generated by the rooted tree of degree $1$ (\emph{i.e. $t=\tun$}).
\end{theo}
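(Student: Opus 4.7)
The plan is to verify the two properties defining a free left pre-Lie algebra on one generator: that $(\g{\TR},\circ)$ itself satisfies the pre-Lie identity, and that it enjoys the correct universal property with respect to $\tun$. The first step is a direct combinatorial computation. For rooted trees $t_{1},t_{2},t_{3}$, I would expand $t_{1}\circ(t_{2}\circ t_{3})$ as a double sum indexed by a vertex $v\in V(t_{3})$ (where $t_{2}$ is grafted) and a vertex $w$ of the resulting tree (where $t_{1}$ is then grafted). Splitting the outer sum according to whether $w$ lies in the original copy of $t_{3}$ or in the freshly grafted copy of $t_{2}$, one observes that the first class of summands is manifestly symmetric under $t_{1}\leftrightarrow t_{2}$ and therefore cancels in the associator $t_{1}\circ(t_{2}\circ t_{3})-(t_{1}\circ t_{2})\circ t_{3}$, while the second class reproduces $(t_{1}\circ t_{2})\circ t_{3}$ term by term. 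The pre-Lie relation follows.

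For the universal property, let $(A,*)$ be any left pre-Lie algebra and $a\in A$. Uniqueness of a morphism $\phi:\g{\TR}\to A$ with $\phi(\tun)=a$ is immediate, since every rooted tree can be reached from $\tun$ by iterated graftings, so $\phi$ is forced on every tree once $\phi(\tun)$ is fixed. Existence is the real substance of the theorem and the main obstacle. The naive recursion suggested by $\tun\circ s=\sum_{w\in V(s)}\tau_{\tun,s,w}$ does not reduce the complexity of a tree (every summand has the same number of vertices as the target), and the automorphism groups of trees frustrate any attempt to break the resulting ties canonically.

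The cleanest way around this difficulty, following Chapoton and Livernet, is operadic. One realizes the operad $\PreLie$ as the $S$-module whose arity $n$ piece is spanned by rooted trees on the vertex set $\{1,\ldots,n\}$, with the partial composition $\circ_{i}$ given by grafting a tree at the vertex labelled $i$ and summing over all possible insertion points on the graft. Verifying operadic associativity amounts to the same splitting argument used for the pre-Lie identity, so one obtains a morphism from $\PreLie$ to this operad; a dimension count (or explicit inversion of the composition) shows this is an isomorphism. Taking $S_{n}$-coinvariants at each arity then identifies the free pre-Lie algebra on one generator with the vector space of unlabelled rooted trees equipped with the grafting product; combined with the pre-Lie identity verified above, this yields the asserted isomorphism with $(\g{\TR},\circ)$ sending the generator to $\tun$.
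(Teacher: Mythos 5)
Your argument is correct and ultimately rests on the same source as the paper: the paper's proof of Theorem \ref{prelielibreun} is a one-line citation of Chapoton and Livernet \cite[corollary 1.10]{Chapoton2001}, and your operadic realization of $\PreLie$ by labelled rooted trees with grafting composition, followed by passage to coinvariants, is precisely the content of that reference. The material you supply beyond the citation --- the splitting of $t_{1}\circ(t_{2}\circ t_{3})$ according to where the second graft lands, which both proves the pre-Lie identity and isolates the symmetric associator, and the observation that uniqueness follows from generation of $\g{\TR}$ by $\tun$ --- is sound, while the genuinely hard step (that the candidate operad of labelled rooted trees is isomorphic to $\PreLie$) is, exactly as in the paper, delegated to \cite{Chapoton2001}.
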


\begin{proof}
	It is corollary 1.10 proved by Chapoton and Livernet in \cite{Chapoton2001}.
\end{proof}

\Ex{}
\begin{align*}
\tdeux\circ \ttroisdeux=&\tcinqcinq+\tcinqonze+\tcinqquatorze,&
\ttroisdeux\circ \tdeux=&\tcinqhuit+\tcinqquatorze.
\end{align*}

\begin{prop}
	On $\sym{\g{\TR}}$, the product $\star$, built with the pre-Lie structure and the theorem of Oudom and Guin, is given by:
	\[\star:\left\{\begin{array}{rcl}\Hgl\ot \Hgl &\longrightarrow &\Hgl \\ t_{1}\dots t_{n}\ot t_{n+1}\dots t_{n+m} &\longrightarrow & \displaystyle\sum_{\sigma:I\subseteq\llbracket1,n\rrbracket\longrightarrow V(t_{n+1}\dots t_{n+m})}(t_{1}\dots t_{n},t_{n+1}\dots t_{n+m},\sigma) \end{array}\right.\]
	where 
	\begin{itemize}
		\item[] $V(t_{n+1}\dots t_{n+m})$ is the set of the vertices of the forest $t_{n+1}\dots t_{n+m}$,
		\item[] $(t_{1}\dots t_{n},t_{n+1}\dots t_{n+m},\sigma)$ is the rooted forest obtained by grafting the tree $t_{i}$ on the vertex  $\sigma(i)$ for all $i$ in $I$.
	\end{itemize} 
\end{prop}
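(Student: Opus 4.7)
The plan is to first prove, by induction on $n$, the analogous identity for the extended product $\circ$ on $\sym{\g{\TR}}$, where the domain of $\sigma$ is the full set $\llbracket 1,n\rrbracket$:
\[
t_{1}\cdots t_{n}\circ t_{n+1}\cdots t_{n+m}=\sum_{\sigma:\llbracket 1,n\rrbracket\to V(t_{n+1}\cdots t_{n+m})}(t_{1}\cdots t_{n},t_{n+1}\cdots t_{n+m},\sigma),
\]
and then to deduce the stated formula for $\star$ from the definition $a\star b=\sum_{a}a^{(1)}(a^{(2)}\circ b)$ together with the cocommutative coproduct on $\sym{\g{\TR}}$.

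For the base case $n=1$, I would proceed by a secondary induction on $m$, the number of trees in the second factor. The case $m=1$ is the definition of $\circ$ on $\g{\TR}$, namely grafting $t_{1}$ onto a vertex of $t_{n+1}$. For $m\geq 2$, I would write $G=t_{n+1}\cdot G'$ with $G'=t_{n+2}\cdots t_{n+m}$ and apply the distributivity rule $a\circ(bc)=\sum_{a}(a^{(1)}\circ b)(a^{(2)}\circ c)$. Since $t_{1}$ is primitive in $\sym{\g{\TR}}$, this reads $t_{1}\circ(t_{n+1}G')=(t_{1}\circ t_{n+1})G'+t_{n+1}(t_{1}\circ G')$, and the two terms correspond precisely to grafting $t_{1}$ on a vertex of $t_{n+1}$ or on a vertex of one of the remaining trees.

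The inductive step $n\geq 2$ is the main obstacle, since it is where the pre-Lie relation intervenes. Writing $F=t_{2}\cdots t_{n}$, I would use the rule $(t_{1}F)\circ G=t_{1}\circ(F\circ G)-(t_{1}\circ F)\circ G$. By the primary induction hypothesis, $F\circ G$ is a sum over total maps $\sigma_{0}:\llbracket 2,n\rrbracket\to V(G)$ of the forest obtained by grafting each $t_{i}$ onto $\sigma_{0}(i)$. For each such forest $F_{\sigma_{0}}$, $t_{1}\circ F_{\sigma_{0}}$ splits as a sum over vertices of $G$ (producing the desired terms where $\sigma(1)\in V(G)$) plus a sum over vertices of the grafted subtrees $t_{2},\dots,t_{n}$. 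The latter terms are exactly what is produced by $(t_{1}\circ F)\circ G$: indeed, $t_{1}\circ F=\sum_{j=2}^{n}\sum_{v\in V(t_{j})}t_{2}\cdots t'_{j,v}\cdots t_{n}$ (by the $n=1$ case applied inside the product, using iterated distributivity), and grafting these $(n-1)$-forests onto $G$ by induction reproduces the same terms. The subtraction cancels them exactly, leaving the desired sum indexed by total maps $\sigma:\llbracket 1,n\rrbracket\to V(G)$.

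Finally, to pass from $\circ$ to $\star$, I would observe that in $\sym{\g{\TR}}$ each $t_{i}$ is primitive, so the coproduct of $t_{1}\cdots t_{n}$ equals $\sum_{I\subseteq\llbracket 1,n\rrbracket}\bigl(\prod_{i\in I^{c}}t_{i}\bigr)\otimes\bigl(\prod_{i\in I}t_{i}\bigr)$. Substituting into $F\star G=\sum F^{(1)}(F^{(2)}\circ G)$ and applying the formula already established for $\circ$ to each $F^{(2)}\circ G$ yields
\[
(t_{1}\cdots t_{n})\star(t_{n+1}\cdots t_{n+m})=\sum_{I\subseteq\llbracket 1,n\rrbracket}\sum_{\sigma:I\to V(t_{n+1}\cdots t_{n+m})}\Bigl(\prod_{i\in I^{c}}t_{i}\Bigr)\cdot(\text{graftings of }t_{i},\ i\in I),
\]
which is exactly the stated formula once one identifies the ungrafted factors $\prod_{i\in I^{c}}t_{i}$ with the components of $(t_{1}\cdots t_{n},t_{n+1}\cdots t_{n+m},\sigma)$ attached to indices outside the domain of $\sigma$.
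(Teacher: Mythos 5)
Your proof is correct. Note that the paper itself gives no proof of this proposition: it is stated bare, followed only by examples, and is implicitly treated as the known description of the Grossman--Larson product that falls out of the Oudom--Guin construction applied to the Chapoton--Livernet realization of the free pre-Lie algebra. Your double induction supplies exactly the missing verification. The base case ($n=1$, secondary induction on $m$ via $a\circ(bc)=\sum(a^{(1)}\circ b)(a^{(2)}\circ c)$ and primitivity of a single tree) is routine, and the delicate point --- the inductive step using $(t_{1}F)\circ G=t_{1}\circ(F\circ G)-(t_{1}\circ F)\circ G$ --- is handled correctly: the terms of $t_{1}\circ(F\circ G)$ in which $t_{1}$ lands on a vertex of a previously grafted $t_{j}$ are in visible bijection with the terms of $(t_{1}\circ F)\circ G$ (index both by the triple $(j,v,\sigma_{0})$ with $v\in V(t_{j})$ and $\sigma_{0}:\llbracket 2,n\rrbracket\to V(G)$, and use that grafting $t_{1}$ at $v$ commutes with grafting $t_{j}$ at $\sigma_{0}(j)$), so the subtraction cancels them exactly and leaves the sum over total maps $\llbracket 1,n\rrbracket\to V(G)$. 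The final passage from $\circ$ to $\star$ via $\De(t_{1}\cdots t_{n})=\sum_{I}\prod_{i\in I^{c}}t_{i}\ot\prod_{i\in I}t_{i}$ correctly produces the sum over partial maps, including the term $I=\emptyset$ giving the disjoint union, and is consistent with the multiplicities in the paper's examples (e.g.\ the coefficient $2$ in $\tun\tun\star\tdeux$ coming from the two positions). No gap.
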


\Ex{}
\begin{align*}
\ttroisun\star\tdeux=&\tcinqsix+\tcinqtreize+\ttroisun\tdeux,\\
\tun\tun\star\tdeux=&2\ttroisun\tun+2\tun\ttroisdeux+\tquatreun+\tquatrequatre+2\tquatretrois+\tun\tun\tdeux,\\
\tdeux\star\tun\tun=&2\tun\ttroisdeux+\tun\tun\tdeux.
\end{align*}

\subsubsection{Quotient Hopf algebra of sub-binary trees}
\begin{defi}
	\begin{enumerate}
		\item A rooted tree $t$ is a sub-binary tree if all its vertices have a fertility less than or equal to 2. The set of rooted sub-binary trees is denoted by $\sbT$.
		\item A rooted forest $F$ is sub-binary if all of its trees are sub-binary trees. We denote the set of rooted sub-binary forests by $\sbF$.
	\end{enumerate}
\end{defi}
\begin{prop}
	We consider the vector space $\Idl{I}=Vect( F, F\in\FRt\setminus\sbF)$. It is a Hopf biideal of $\Hgl$.
\end{prop}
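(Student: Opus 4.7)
The plan is to verify the three defining properties of a Hopf biideal of $\Hgl$: that $\Idl{I}$ is a two-sided ideal for $\star$, a two-sided coideal satisfying $\varepsilon(\Idl{I})=0$, and stable under the antipode. The central structural observation I will use throughout is that the grafting operation underlying $\star$ never decreases vertex fertilities: when a tree $t_i$ is grafted onto a vertex $s$, only $s$ acquires a new child (namely the root of $t_i$), while every other vertex of the resulting forest retains its previous fertility. So a vertex of fertility at least $3$ in one of the operands persists in every term of a product, and in every factor of a coproduct decomposition.

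For the two-sided ideal condition, I would fix forests $F_1=t_1\cdots t_n$ and $F_2=t_{n+1}\cdots t_{n+m}$ and look at an arbitrary term $(F_1,F_2,\sigma)$ in the expansion of $F_1\star F_2$, where $\sigma:I\subseteq\{1,\dots,n\}\longrightarrow V(t_{n+1}\cdots t_{n+m})$. Suppose first that $F_1\notin\sbF$, so some $t_i$ contains a vertex $s$ of fertility at least $3$. If $i\notin I$, then $t_i$ is an unchanged connected component of the output, so the output is not sub-binary. If $i\in I$, then $t_i$ is grafted as a whole subtree into some $t_j$, and the vertex $s$ retains all its original children, hence fertility at least $3$. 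If instead $F_2\notin\sbF$, a bad vertex of $F_2$ can only gain additional children under $\sigma$ and remains bad. In every case, each summand of $F_1\star F_2$ lies in $\Idl{I}$, which proves $\Hgl\star\Idl{I}\subseteq\Idl{I}$ and $\Idl{I}\star\Hgl\subseteq\Idl{I}$.

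For the coideal property, I recall that as a coalgebra $\Hgl=\sym{\g{\TR}}$ with every rooted tree primitive, so for $F=t_1\cdots t_n$
\[\De(F)=\sum_{J\subseteq\{1,\dots,n\}}\Big(\prod_{j\in J}t_j\Big)\ot\Big(\prod_{k\notin J}t_k\Big).\]
Each individual tree lands entirely in one tensor factor, so if some $t_i$ is not sub-binary, then whichever factor contains $t_i$ belongs to $\Idl{I}$. Hence $\De(\Idl{I})\subseteq\Idl{I}\ot\Hgl+\Hgl\ot\Idl{I}$, and $\varepsilon$ vanishes on $\Idl{I}$ since the empty forest is the only scalar component.

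Finally, $\Hgl$ is graded and connected, so the antipode is determined by the recursion $S(F)=-F-\sum_{(F)}S(F^{(1)})\star F^{(2)}$ on elements $F$ of positive degree, the sum being taken over the reduced coproduct. An induction on degree using the ideal and coideal properties just established yields $S(\Idl{I})\subseteq\Idl{I}$. I foresee no real obstacle here; the whole argument rests on the single combinatorial observation that the presence of a vertex of fertility at least $3$ is preserved both under grafting and under splitting a forest into subforests.
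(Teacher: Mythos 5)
Your argument is correct and is exactly the \enquote{direct checking} that the paper leaves to the reader: the key observation that grafting and the unshuffle coproduct both preserve the presence of a vertex of fertility at least $3$ gives the ideal and coideal properties, and antipode stability then follows from the graded connected recursion. Nothing is missing.
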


\begin{proof} Direct checking.
\end{proof}

\begin{prop}
	Le vector space $\HsbT=Vect(F, F\in\sbF)$ is a quotient Hopf algebra of $\Hgl$.
\end{prop}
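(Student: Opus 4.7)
The proposition is essentially a corollary of the preceding one, obtained via the standard vector space complement trick. The plan is as follows.

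First, I would observe the vector space decomposition
\[\Hgl = \HsbT \oplus \Idl{I},\]
which is an immediate consequence of the set-level partition $\FRt = \sbF \sqcup (\FRt \setminus \sbF)$, since the monomial basis of $\Hgl$ is indexed by $\FRt$. This decomposition gives a canonical projection $\pi : \Hgl \twoheadrightarrow \HsbT$ with kernel $\Idl{I}$.

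Second, I would invoke the preceding proposition: since $\Idl{I}$ is a Hopf biideal of $\Hgl$, the quotient $\Hgl / \Idl{I}$ carries a canonical Hopf algebra structure such that the quotient map $\Hgl \twoheadrightarrow \Hgl / \Idl{I}$ is a morphism of Hopf algebras. The composition $\HsbT \hookrightarrow \Hgl \twoheadrightarrow \Hgl / \Idl{I}$ is then a linear isomorphism: it is injective because $\HsbT \cap \Idl{I} = 0$, and surjective because every class in the quotient has a representative that is a linear combination of sub-binary forests (replace any non-sub-binary basis element by $0$ modulo $\Idl{I}$).

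Transporting the Hopf algebra structure of $\Hgl / \Idl{I}$ along this isomorphism equips $\HsbT$ with the structure of a quotient Hopf algebra of $\Hgl$. Concretely, the product on $\HsbT$ is $F_1 \star' F_2 := \pi(F_1 \star F_2)$, i.e.\ one computes $F_1 \star F_2$ in $\Hgl$ using the Oudom--Guin/Grossman--Larson formula and then discards every term whose underlying forest has a vertex of fertility at least three; the coproduct is $(\pi \otimes \pi) \circ \Delta$, the unit and counit are inherited, and the antipode descends because $\Idl{I}$ is stable under $S$ (as part of being a Hopf biideal). There is no real obstacle here: once the previous proposition is in hand, the only thing to check is that $\HsbT$ is an explicit vector space complement of $\Idl{I}$, which is transparent from the basis. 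The author's proof will likely consist of the single phrase \enquote{direct checking}, and mine essentially does too.
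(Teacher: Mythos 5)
Your proposal is correct and matches the paper's argument: the paper likewise exhibits the canonical surjection $\Pi_{\HsbT}$ sending a forest to itself if sub-binary and to $0$ otherwise, notes that its kernel is the biideal $\Idl{I}$ from the preceding proposition, and lets the quotient Hopf structure descend. You merely spell out in more detail the vector space splitting $\Hgl=\HsbT\oplus\Idl{I}$ and the transported operations, which the paper leaves implicit.
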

\begin{proof}
	It is sufficient to consider the canonical surjection
	\[\Pi_{\HsbT}:\left\{\begin{array}{rcl}\Hgl &\longrightarrow & \HsbT \\ F\in\FRt &\longrightarrow & \begin{cases}
	F &\mbox{ if } F\in\sbF, \\
	0 &\mbox{ else.}
	\end{cases} \end{array}\right.\]
	The biideal $\Idl{I}$ is the kernel of $\Pi_{\HsbT}$. 
\end{proof}

\Rq{1} By duality, we can consider $\dual{\HsbT}$ as a Hopf subalgebra of the Connes and Kreimer Hopf algebra.

\subsection{Dual of the dissection diagrams Hopf algebra}
As the Hopf algebra $\Hd$ is graded and connected, its graded dual $\dual{\Hd}$ is a graded connected Hopf algebra. We aim at studying its structure. Till corollary \ref{similitudetravailLoic}, we use the same approach as in \cite[chapter 7, section 7.3]{Foissy}.

\begin{prop}\label{algEnvp}
	The Hopf algebra $\dual{\Hd}$ is isomorphic to the enveloping algebra $\Ug{\D}$ where $\g{\D}$ is the Lie algebra $\Prim{\dual{\Hd}}{}$ of the primitive elements of $\dual{\Hd}$.
\end{prop}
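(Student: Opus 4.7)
The plan is to apply the Cartier--Quillen--Milnor--Moore theorem (abbreviated $\CQMM$ in the introduction of the paper), which gives exactly this kind of conclusion once the hypotheses of graded, connected, and cocommutative are checked. So the job reduces to verifying those three properties for $\dual{\Hd}$, and all three are transferred from corresponding properties of $\Hd$ by graded duality.

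First I would note that $\Hd = \sym{\D}$ is graded by the degree of the dissection diagrams, with $(\Hd)_0 = \K\cdot 1_{\Hd}$ and each homogeneous component $(\Hd)_n$ finite-dimensional (since the number $d_n$ of diagrams of degree $n$ is finite, and $(\Hd)_n$ is spanned by the monomials of total degree $n$ in the diagrams). This already gives $\dual{\Hd}$ a natural grading, with $(\dual{\Hd})_n = ((\Hd)_n)^{*}$, so $\dual{\Hd}$ is a graded Hopf algebra whose product is the transpose of $\De$ and whose coproduct is the transpose of $m$. Finite-dimensionality in each degree is what makes the graded dual itself a Hopf algebra, and what identifies $(\dual{\Hd})^{\circledast}$ with $\Hd$.

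Next, connectedness: $(\dual{\Hd})_0 = \K$ because $(\Hd)_0 = \K$. Cocommutativity of $\dual{\Hd}$ is the transpose statement of commutativity of $\Hd$; but $\Hd = \sym{\D}$ carries the symmetric (disjoint union) product, which is commutative, so $\dual{\Hd}$ is cocommutative.

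Finally, since $\K$ has characteristic $0$ and $\dual{\Hd}$ is a graded, connected, cocommutative Hopf algebra, the \CQMM{} applies and yields a canonical isomorphism of Hopf algebras
\[
\dual{\Hd}\;\cong\;\Ug{\D},
\]
where $\g{\D}=\Prim{\dual{\Hd}}{}$ is equipped with the commutator bracket inherited from $\dual{\Hd}$. There is no real obstacle here; the only thing to be careful about is checking that the graded components of $\Hd$ are indeed finite-dimensional (so that graded duality behaves well and $\dual{\Hd}$ is genuinely a Hopf algebra in the usual sense), which follows from the explicit bound on $d_n$ recalled earlier.
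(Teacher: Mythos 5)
Your proposal is correct and follows exactly the route the paper takes: the paper's proof is the single line ``We apply the \CQMM{}'', relying on the previously noted fact that $\Hd$ is graded and connected (hence so is $\dual{\Hd}$) and that commutativity of the disjoint-union product dualizes to cocommutativity. You have simply written out the hypothesis-checking that the paper leaves implicit, including the finite-dimensionality of the graded pieces needed for graded duality, which is a welcome but not substantively different elaboration.
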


\begin{proof}
	We apply the \CQMM.
\end{proof}

We denote by $(Z_{F_{D}})_{F_{D}\in\Hd}$ the dual basis of disjoint union of dissection diagrams. We know that generators of the Lie algebra $\Prim{\dual{\Hd}}{}$ are $\g{\D}=((1)+Ker(\varepsilon)^{2})^{\perp}$, so a basis of $\g{\D}$ is given by $(Z_{D})_{D\in\D}$. In order to describe operations of the Hopf algebra $\dual{\Hd}$, it is sufficient to define the product on $Z_{D}$, with $D$ a dissection diagram. Each dissection diagram $D$ is primitive. Concerning the product, we have the following proposition:

\begin{prop}
	Let $D_{1}$ and $D_{2}$ be two nonempty dissection diagrams of respective degree $n_{1}$ and $n_{2}$ and $x$ a scalar. the product of $Z_{D_{1}}$ and $Z_{D_{2}}$ in this order is given by:
	\[Z_{D_{1}}Z_{D_{2}}=(1+\delta_{D_{1},D_{2}})Z_{D_{1}D_{2}}+\sum_{D\in\D}c(D_{1},D_{2};D)Z_{D}\]
	where for all dissection diagram $D$, the coefficient $c(D_{1},D_{2};D)$ is a polynomial $Q_{D}$, depending on $D$ and evaluated in $x$. We have: 
	\[Q_{D}(x)=(Z_{D_{1}}\ot Z_{D_{2}})\circ \De(D).\] 
\end{prop}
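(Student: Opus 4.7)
The plan is to exploit the duality between the graded Hopf algebras $\Hd$ and $\dual{\Hd}$: for every forest $F$ in the canonical basis of $\Hd$,
\[\langle Z_{D_1}Z_{D_2}, F\rangle = \langle Z_{D_1}\ot Z_{D_2}, \De(F)\rangle.\]
So I would expand $Z_{D_1}Z_{D_2}$ over the dual basis by computing this pairing and classifying $F$ according to its number of connected components. The strategy is to show that only two types of $F$ carry non-zero coefficients: single diagrams $F = D$, and the specific forest $F = D_1 D_2$.

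For $F = D$ a single dissection diagram, the coproduct formula gives directly
\[\langle Z_{D_1}\ot Z_{D_2}, \De(D)\rangle = \sum_{\substack{C\subseteq \Ch(D)\\ q_C(D) = D_1,\; r_C(D) = D_2}} x^{k_C(D)},\]
a polynomial in $x$ depending on $(D_1, D_2, D)$, which is precisely the announced $Q_D(x)$. This accounts for the second sum in the statement.

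For a forest $F = E_1 \cdots E_k$ with $k \geq 2$ non-empty components, I would expand $\De(F) = \De(E_1)\cdots\De(E_k)$ and extract the coefficient of $D_1 \ot D_2$. The key observation is that in each term $x^{k_{C_i}(E_i)} q_{C_i}(E_i) \ot r_{C_i}(E_i)$ of $\De(E_i)$, the right-hand factor is always a single dissection diagram of degree $|C_i|$, and it equals $1$ iff $C_i = \emptyset$ (in which case $q_{C_i}(E_i) = E_i$); dually $q_{C_i}(E_i) = 1$ iff $C_i = \Ch(E_i)$ (in which case $r_{C_i}(E_i) = E_i$). Requiring $\prod_i r_{C_i}(E_i) = D_2$ (a single non-empty diagram) forces all but one $C_i$ to be empty, and symmetrically $\prod_i q_{C_i}(E_i) = D_1$ forces all but one $C_i$ to equal $\Ch(E_i)$.

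When $k \geq 3$, at least $k - 2 \geq 1$ indices must satisfy both $C_i = \emptyset$ and $C_i = \Ch(E_i)$, which is impossible for non-empty $E_i$; hence the coefficient of $Z_F$ vanishes. When $k = 2$ the two conditions are compatible only when the multiset $\{E_1, E_2\}$ equals $\{D_1, D_2\}$, so $F = D_1 D_2$; the choice $(C_1, C_2) = (\emptyset, \Ch(E_2))$ carries $k_C$-exponent zero and contributes $1$, and if $D_1 = D_2$ the symmetric choice gives a second contribution $1$, yielding the coefficient $1 + \delta_{D_1, D_2}$ on $Z_{D_1 D_2}$. The step requiring the most care is this boundary bookkeeping of the extremal subsets $C = \emptyset$ and $C = \Ch(E_i)$; once in hand, summing over all $F$ assembles the stated formula.
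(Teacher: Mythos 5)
Your proof is correct and is exactly the intended argument: the paper states this proposition without a written proof, since it is the standard dualization $\langle Z_{D_1}Z_{D_2},F\rangle=\langle Z_{D_1}\ot Z_{D_2},\De(F)\rangle$ together with the observations that $r_{C}(E)=1$ iff $C=\emptyset$ and $q_{C}(E)=1$ iff $C=\Ch(E)$, which is precisely your bookkeeping. Your case analysis on the number of components of $F$ (vanishing for $k\geq 3$, the coefficient $1+\delta_{D_{1},D_{2}}$ for $F=D_{1}D_{2}$, and $Q_{D}(x)$ for a single diagram) fills in the omitted details correctly.
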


\ud{0.5}
\Ex{1}
\begin{align*}
	Z_{\diagramme{1}{}{{1/0}}{}{}{}{}{}}Z_{\diagramme{1}{}{{1/0}}{}{}{}{}{}}=&2Z_{\diagramme{1}{}{{1/0}}{}{}{}{}{}\diagramme{1}{}{{1/0}}{}{}{}{}{}}+2Z_{\diagramme{2}{}{{1/0,2/0}}{}{}{}{}{}}+2Z_{\diagramme{2}{}{{1/2,2/0}}{}{}{}{}{}}+(1+x)Z_{\diagramme{2}{}{{1/0,2/1}}{}{}{}{}{}}.
\end{align*}
\ud{0.7}

\begin{cor}\label{similitudetravailLoic}
	The Lie algebra $\g{\D}$ is the vector space $\g{\D}=\Prim{\dual{\Hd}}{}$ equipped with the bracket $[-,-]$ defined by:
	for all dissection diagrams $D_{1}$ and $D_{2}$,
	\[[Z_{D_{1}},Z_{D_{2}}]=\sum\limits_{D\in\D}\bigg(c(D_{1},D_{2};D)-c(D_{2},D_{1};D)\bigg)Z_{D}.\]
\end{cor}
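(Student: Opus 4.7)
The plan is to derive this directly from Proposition \ref{algEnvp} and the explicit product formula stated in the preceding proposition. By the Cartier-Quillen-Milnor-Moore theorem, $\dual{\Hd} \cong \Ug{\D}$, and this isomorphism identifies $\g{\D}$ with $\Prim{\dual{\Hd}}{}$ equipped with the commutator bracket $[a,b] = ab - ba$ inherited from the associative product on $\dual{\Hd}$. So the task reduces to computing the commutator $Z_{D_1} Z_{D_2} - Z_{D_2} Z_{D_1}$ using the explicit product formula.

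Concretely, I would apply the preceding proposition twice, once to $Z_{D_1}Z_{D_2}$ and once to $Z_{D_2}Z_{D_1}$, obtaining
\[
Z_{D_1} Z_{D_2} = (1+\delta_{D_1,D_2}) Z_{D_1 D_2} + \sum_{D \in \D} c(D_1,D_2;D) \, Z_D,
\]
\[
Z_{D_2} Z_{D_1} = (1+\delta_{D_2,D_1}) Z_{D_2 D_1} + \sum_{D \in \D} c(D_2,D_1;D) \, Z_D.
\]
Subtracting, the \enquote{leading} contributions cancel: indeed $\Hd$ is commutative, so $D_1 D_2 = D_2 D_1$ as an element of $\sym{\D}$, and obviously $\delta_{D_1,D_2} = \delta_{D_2,D_1}$. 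What survives is precisely
\[
[Z_{D_1}, Z_{D_2}] = \sum_{D \in \D} \bigl(c(D_1,D_2;D) - c(D_2,D_1;D)\bigr) Z_D,
\]
which is the claimed formula.

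There is no genuine obstacle here: all the substantive content has already been established (the isomorphism with an enveloping algebra via CQMM, and the explicit expression of $c(D_1,D_2;D)$ as $(Z_{D_1}\otimes Z_{D_2})\circ\Delta(D)$ in the preceding proposition). The only thing to check is the cancellation of the symmetric part of the product, which is immediate from commutativity of $\Hd$. The statement is really a repackaging of the preceding proposition into Lie-theoretic language.
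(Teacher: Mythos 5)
Your argument is correct and is exactly the (implicit) justification the paper relies on: the paper states this corollary without a separate proof, treating it as the immediate consequence of Proposition \ref{algEnvp} (CQMM) and the product formula for $Z_{D_{1}}Z_{D_{2}}$, with the symmetric terms $(1+\delta_{D_{1},D_{2}})Z_{D_{1}D_{2}}$ cancelling in the commutator since $D_{1}D_{2}=D_{2}D_{1}$ in $\sym{\D}$. Nothing is missing.
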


\ud{0.5}
\Ex{1}
\begin{align*}
	\Bigg[Z_{\diagramme{2}{}{{1/2,2/0}}{}{}{}{}{}},Z_{\diagramme{1}{}{{1/0}}{}{}{}{}{}}\Bigg]=&-Z_{\diagramme{3}{}{{1/2,2/0,3/2}}{}{}{}{}{}}-Z_{\diagramme{3}{}{{1/3,2/3,3/0}}{}{}{}{}{}}-Z_{\diagramme{3}{}{{1/2,2/0,3/0}}{}{}{}{}{}}-Z_{\diagramme{3}{}{{1/3,2/1,3/0}}{}{}{}{}{}}\\
	+&Z_{\diagramme{3}{}{{1/0,2/3,3/1}}{}{}{}{}{}}+xZ_{\diagramme{3}{}{{1/0,2/1,3/0}}{}{}{}{}{}}+x^{2}Z_{\diagramme{3}{}{{1/0,2/1,3/2}}{}{}{}{}{}}.
\end{align*}

Now we aim at defining a Hopf algebra isomorphic to $\dual{\Hd}$ by providing it with a structure of Oudom and Guin. We create then a morphism between $\Hgl$ and $\dual{\Hd}$ by using pre-Lie structures.

\begin{prop}
	The vector space $\g{\D}=\Prim{\g{\D}}{}$, equipped with $\circ$ defined by: 
	\[\circ:\left\{\begin{array}{rcl}\g{\D}\ot\g{\D} &\longrightarrow & \g{\D} \\ Z_{D_{1}}\ot Z_{D_{2}} &\longrightarrow & \sum\limits_{D\in\D}c(D_{1},D_{2};D)Z_{D}, \end{array}\right.\] is a left pre-Lie algebra.
\end{prop}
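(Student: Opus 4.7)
Plan. The most direct approach is to invoke the Loday--Ronco structural theorem already cited in the introduction: a right-sided free-commutative combinatorial Hopf algebra induces a pre-Lie structure on the primitive space of its graded dual. My whole strategy is to verify the hypotheses of that theorem and then recognize that the resulting abstract pre-Lie product is the one given in the statement.

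First I would verify that $\Hd$ meets these hypotheses. By construction $\Hd=\sym{\D}$ is free-commutative as an algebra. The right-sidedness amounts to the inclusion $\widetilde{\De}(\D^{+})\subset\sym{\D}\otimes\D^{+}$, and this is immediate from the selection--quotient definition of $\De$: for every dissection diagram $D$ and every $C\subset\Ch(D)$, the remainder $r_{C}(D)$ is by construction a single dissection diagram (never a disjoint union), so every nontrivial term of $\De(D)$ lies in $\sym{\D}\otimes\D^{+}$, with the $\D^{+}$-slot on the right.

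Next, I would identify the abstract Loday--Ronco pre-Lie product with the operation defined in the statement. The abstract product is $Z_{D_{1}}\circ Z_{D_{2}}:=\pi(Z_{D_{1}}Z_{D_{2}})$, where $\pi\colon\dual{\Hd}\twoheadrightarrow\g{\D}$ is the projection that annihilates $\K$ and the degree-$\geq 2$ symmetric piece under the PBW isomorphism $\dual{\Hd}\cong\sym{\g{\D}}$. By the preceding proposition,
\[ Z_{D_{1}}Z_{D_{2}}=(1+\delta_{D_{1},D_{2}})Z_{D_{1}D_{2}}+\sum_{D\in\D}c(D_{1},D_{2};D)Z_{D}, \]
whose first summand belongs to the degree-two part of $\sym{\g{\D}}$ and whose second lies in $\g{\D}$. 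Projecting by $\pi$ therefore leaves exactly $\sum_{D}c(D_{1},D_{2};D)Z_{D}$, which is the formula given in the statement.

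For a self-contained verification of the pre-Lie identity one can dualize: the relation $c(D_{1},D_{2};D')=(Z_{D_{1}}\otimes Z_{D_{2}})\De(D')$ turns both $((x\circ y)\circ z)(D)$ and $(x\circ(y\circ z))(D)$, with $x=Z_{D_{1}}$, $y=Z_{D_{2}}$, $z=Z_{D_{3}}$, into pairings of $Z_{D_{1}}\otimes Z_{D_{2}}\otimes Z_{D_{3}}$ against fragments of the iterated coproduct $(\De\otimes\mathrm{Id})\De(D)=(\mathrm{Id}\otimes\De)\De(D)$. Coassociativity makes these two full iterated coproducts equal, so the difference $(x\circ y)\circ z-x\circ(y\circ z)$ is supported on terms where an intermediate Sweedler factor is a nontrivial product (killed by each $Z_{D_{i}}$ separately but not jointly). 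The main obstacle is showing that this residual contribution is symmetric in $D_{1},D_{2}$: this reflects the commutativity of the product in $\Hd$, combined with the right-sided shape of $\De$, which forces the two "outer" slots to enter symmetrically. The Loday--Ronco theorem wraps all of this combinatorial bookkeeping into a single structural statement, which is why I would rely on it as the main argument.
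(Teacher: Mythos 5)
Your proof is correct and follows essentially the same route as the paper: both verify that $\Hd$ is a free-commutative right-sided combinatorial Hopf algebra (the right-sidedness coming from the fact that $r_{C}(D)$ is always a single dissection diagram) and then invoke the Loday--Ronco theorems to obtain the pre-Lie structure on $\g{\D}$. Your additional identification of the abstract product with the coefficients $c(D_{1},D_{2};D)$ and the sketched direct check via coassociativity are consistent supplements, but the core argument is the same appeal to Loday--Ronco that the paper makes.
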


\begin{proof}
	Let us first recall the notion of free-commutative right-sided combinatorial Hopf algebra introduced by Loday and Ronco in \cite[definitions 3.16, 4.1, 4.16 and 5.7]{Loday2010}. A free-commutative right-sided combinatorial Hopf algebra $H$ is an associative commutative free Hopf algebra $(H,m,\De)$ generated by $\Gen{H}{}$ and such that, for $h\in \Gen{H}{}$, we have $\De(h)=\sum\limits_{h}h^{(1)}\ot h^{(2)}$ with $h^{(2)}\in\Gen{H}{}$. The Hopf algebra $\Hd$ is a free-commutative right-sided combinatorial Hopf algebra. According to Loday and Ronco \cite[theorems 5.3 and 5.8]{Loday2010}, the couple $(\g{\D},\circ)$ is a left pre-Lie algebra.
\end{proof}

\begin{prop}
	We consider the Hopf algebra $\mathcal{S}=(\sym{\g{\D}},\star,\De)$ where $\sym{\g{\D}}$ is the symmetric algebra of dissection diagrams, the coproduct $\De$ is the usual coproduct of $\sym{\g{\D}}$ and the product $\star$ is induced by the pre-Lie product $\circ$ defined on $\g{\D}$ and the definition \ref{defOG}. The Hopf algebras $\mathcal{S}$ and $\dual{\Hd}$ are isomorphic.
\end{prop}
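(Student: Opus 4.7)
The strategy is to identify $\mathcal{S}$ with an enveloping algebra via the Oudom--Guin theorem and then match it with the enveloping-algebra description of $\dual{\Hd}$ provided by Proposition \ref{algEnvp}.

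By Theorem \ref{structureOG} applied to the pre-Lie algebra $(\g{\D},\circ)$, the Hopf algebra $\mathcal{S}=(\sym{\g{\D}},\star,\De)$ is isomorphic, via an isomorphism that is the identity on $\g{\D}$, to the enveloping algebra $\mathcal{U}((\g{\D})_{Lie})$, where $(\g{\D})_{Lie}$ is $\g{\D}$ endowed with the bracket $\{Z_{D_{1}},Z_{D_{2}}\}=Z_{D_{1}}\circ Z_{D_{2}}-Z_{D_{2}}\circ Z_{D_{1}}$. On the other hand, Proposition \ref{algEnvp} gives, via CQMM, a Hopf algebra isomorphism $\dual{\Hd}\simeq\Ug{\D}$ extending the inclusion $\g{\D}\hookrightarrow\dual{\Hd}$, where the bracket on $\g{\D}$ is the commutator of the product of $\dual{\Hd}$, explicitly described in Corollary \ref{similitudetravailLoic}.

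The heart of the argument is therefore to check that these two Lie structures on $\g{\D}$ coincide. From the product formula
\[Z_{D_{1}}Z_{D_{2}}=(1+\delta_{D_{1},D_{2}})Z_{D_{1}D_{2}}+\sum_{D\in\D}c(D_{1},D_{2};D)Z_{D},\]
the symmetric term $(1+\delta_{D_{1},D_{2}})Z_{D_{1}D_{2}}$ cancels in the commutator, so that
\[[Z_{D_{1}},Z_{D_{2}}]=\sum_{D\in\D}\bigl(c(D_{1},D_{2};D)-c(D_{2},D_{1};D)\bigr)Z_{D},\]
which is exactly $\{Z_{D_{1}},Z_{D_{2}}\}$ by the definition of the pre-Lie product $\circ$ on $\g{\D}$. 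Thus $(\g{\D})_{Lie}$ coincides, as a Lie algebra, with the Lie algebra of primitives of $\dual{\Hd}$.

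Composing the Oudom--Guin isomorphism $\mathcal{S}\xrightarrow{\sim}\mathcal{U}((\g{\D})_{Lie})$ with the inverse of the CQMM isomorphism $\dual{\Hd}\xrightarrow{\sim}\Ug{\D}$, both extending the identity on $\g{\D}$, yields a Hopf algebra isomorphism $\mathcal{S}\xrightarrow{\sim}\dual{\Hd}$, as required. The only genuine verification is the coincidence of brackets above; everything else is a direct application of the two structural theorems already recalled.
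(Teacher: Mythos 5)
Your proof is correct and follows exactly the route the paper takes: the paper's own proof is the one-line remark that the result is a direct application of Theorem \ref{structureOG} and Proposition \ref{algEnvp}. You usefully make explicit the one verification the paper leaves implicit, namely that the Lie bracket induced by $\circ$ coincides with the commutator bracket of Corollary \ref{similitudetravailLoic}, so that both enveloping algebras are built on the same Lie algebra.
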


\begin{proof}
	It is a direct application of theorem \ref{structureOG} and proposition \ref{algEnvp}. Now we identify $\dual{\Hd}$ with the Hopf algebra obtained by the theorem of Oudom and Guin.
\end{proof}

\begin{lemma}\label{engendrement}
	The pre-Lie generated by the dissection diagram $\diagramme{1}{}{{1/0}}{}{}{}{}{}$ is a non trivial sub-object of $\g{\D}$.
\end{lemma}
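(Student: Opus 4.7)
The plan is to compare dimensions in degree $2$. First, I would observe that the pre-Lie product $\circ$ on $\g{\D}$ is homogeneous of degree $0$: since the coproduct of $\Hd$ preserves the total degree, the coefficient $c(D_{1},D_{2};D)$ vanishes unless $\deg D = \deg D_{1} + \deg D_{2}$. Consequently the sub-pre-Lie algebra $\mathfrak{g}'$ generated by $Z_{D_{1}}$, where $D_{1}=\diagramme{1}{}{{1/0}}{}{}{}{}{}$, is graded, and its degree-$n$ component $\mathfrak{g}'_{n}$ is spanned by the iterated pre-Lie words on $n$ copies of $Z_{D_{1}}$.

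Next I would focus on degree $2$, where there is only one available pre-Lie word, namely $Z_{D_{1}}\circ Z_{D_{1}}$, so that $\dim \mathfrak{g}'_{2}\leq 1$. Invoking the Oudom--Guin identity $Z_{D_{1}}\star Z_{D_{1}} = Z_{D_{1}}\cdot Z_{D_{1}} + Z_{D_{1}}\circ Z_{D_{1}}$, valid because $Z_{D_{1}}$ is primitive, and combining it with the product computation $Z_{D_{1}}Z_{D_{1}}$ already displayed a few lines above the statement, one extracts
\[ Z_{D_{1}}\circ Z_{D_{1}} \;=\; 2\,Z_{\diagramme{2}{}{{1/0,2/0}}{}{}{}{}{}} \;+\; 2\,Z_{\diagramme{2}{}{{1/2,2/0}}{}{}{}{}{}} \;+\; (1+x)\,Z_{\diagramme{2}{}{{1/0,2/1}}{}{}{}{}{}}. \]

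Finally, the family $(Z_{D})_{\deg D=2}$ is a basis of $\g{\D}_{2}$, which therefore has dimension $d_{2}=3$. Since $\mathfrak{g}'_{2}$ has dimension at most $1$ and is clearly non-zero, it is strictly contained in $\g{\D}_{2}$, and this forces the strict inclusion $\mathfrak{g}' \subsetneq \g{\D}$, i.e.\ the non-triviality of the sub-pre-Lie algebra generated by $D_{1}$. I do not anticipate any genuine obstacle: the only computational step is the evaluation of $Z_{D_{1}}\circ Z_{D_{1}}$, which is already implicit in the preceding example, and the rest is a dimension count.
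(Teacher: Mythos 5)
Your proposal is correct and follows essentially the same route as the paper: the paper's own proof also notes that the degree-$2$ component of the sub-pre-Lie algebra is spanned by the single element $Z_{D_{1}}\circ Z_{D_{1}}=2Z_{\diagramme{2}{}{{1/2,2/0}}{}{}{}{}{}}+2Z_{\diagramme{2}{}{{1/0,2/0}}{}{}{}{}{}}+(1+x)Z_{\diagramme{2}{}{{1/0,2/1}}{}{}{}{}{}}$, while $\g{\D}$ in degree $2$ is spanned by the three dissection diagrams of degree $2$, so the inclusion is strict. Your extra remarks on gradedness and the extraction of $Z_{D_{1}}\circ Z_{D_{1}}$ from the $\star$-product are consistent with the paper's definitions and merely make explicit what the paper leaves implicit.
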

\begin{proof}
	The vector subspace of elements of degree $2$ underlying the pre-Lie algebra generated by $\diagramme{1}{}{{1/0}}{}{}{}{}{}$ is generated by 	\[\diagramme{1}{}{{1/0}}{}{}{}{}{}\circ\diagramme{1}{}{{1/0}}{}{}{}{}{}=2\diagramme{2}{}{{1/2,2/0}}{}{}{}{}{}+2\diagramme{2}{}{{1/0,2/0}}{}{}{}{}{}+(1+x)\diagramme{2}{}{{1/0,2/1}}{}{}{}{}{}.\] And yet, the vector space of elements of degree $2$ underlying $\g{\D}$ is genetated by the three dissection diagrams of degree $2$. So, the pre-Lie generated by the dissection diagram $\diagramme{1}{}{{1/0}}{}{}{}{}{}$ is a non trivial sub-object of $\g{\D}$.
\end{proof}

\section{Definition of a Hopf algebra morphism from $\Hgl$ to $\dual{\Hd}$}
We want to describe a Hopf algebra morphism between the  Grossman and Larson rooted forests Hopf algebra and the graded dual of the Hopf algebra of dissection diagrams. To do that we use the underlying pre-Lie structure of the two algebras, the Hopf algebra structure and a chords inserting process. To simplify the writing, for each disjoint union $U$, the element $Z_{U}$ of $\dual{\Hd}$ is denoted by $U$.

\Rq{1} If  $f:\Hgl\longrightarrow\dual{\Hd}$ is a graded Hopf morphism homogeneous of degree $k\geq0$ then it is not surjective. Indeed, for all positive integers different from $1$ there are less rooted forests of degree $n$ than dissection diagrams of degree $n$. Let $n$ bet a positive integer. We denote by $C_n$ the $n$-th Catalan number. There exist at most $C_{n}$ rooted forests of degree $n$. We denote by $d_{n}$ the number of dissection diagrams of degree $n$. Then, $C_1=d_1$ and, if $n$ is greater than $2$ then we have:
\[\frac{d_{n}}{C_{n}}=\frac{(n+1)\displaystyle\binom{3n}{n}}{(2n+1)\displaystyle\binom{2n}{n}}=\frac{(3n)!(n+1)!}{(2n+1)!(2n)!}=\frac{(2n+2)\dots(3n)}{(n+2)\dots(2n)}.\]

\begin{defi}
	We call $\gamma$ the following pre-Lie morphism:
	\[\gamma:\left\{\begin{array}{rcl}
	\g{\TR} &\longrightarrow & \g{\D}\\
	\tun &\longrightarrow &\diagramme{1}{}{{1/0}}{}{}{}{}{}
	\end{array}
	\right.\]
\end{defi}

Now we just have to extend this pre-Lie morphism $\gamma$ as a Hopf algebra morphism. Let us recall first the definition of the grafting operator.
\begin{defi}
	The grafting operator, denoted by $B$, is given by:
	\[B:\left\{
	\begin{array}{rcl}
	\K[\TR] &\longrightarrow & \g{\TR}\\
	t_{1},\dots,t_{n}\in\TR &\longrightarrow & \text{the rooted tree obtained by grafting $t_{1},\dots,t_{n}$ on a common root.} 
	\end{array}
	\right.\]
\end{defi}
\Ex{1} $B(t_{1},t_{2})=\tdtroisunb{}{$t_{2}$}{$t_{1}$}=\tdtroisunb{}{$t_{1}$}{$t_{2}$}=B(t_{2},t_{1})$.

\begin{prop}
	The unique extension $\varphi$ of $\gamma$, built with the Oudom and Guin structure of $\Hgl$, is defined by:
	\[\varphi:\left\{
	\begin{array}{rcl}
	\Hgl &\longrightarrow & \dual{\Hd}\\
	t_{1}\dots t_{n} &\longrightarrow & \varphi(t_{1})\dots\varphi(t_{n})\\
	t &\longrightarrow & L(\varphi(t_{1})\dots\varphi(t_{k}))\\
	\end{array}
	\right.\]
	where $t_{1}\dots t_{k}$ is the rooted forest such that $t=B(t_{1}\dots t_{k})$ and $L$ is the following linear map:
	\[L:\left\{
	\begin{array}{rcl}
	\Hd &\longrightarrow & \D\\
	D_{1}\dots D_{n} &\longrightarrow & \sum\limits_{\substack{G\in\D \\ G \text{ diagram}}}Z_{D_{1}\dots D_{n}}\circ Z_{\diagramme{1}{}{{1/0}}{}{}{}{}{}}(G)G. 
	\end{array}
	\right.\]
\end{prop}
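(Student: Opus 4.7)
The strategy is to invoke the functoriality of the Oudom--Guin construction. By Theorem~\ref{prelielibreun}, $\g{\TR}$ is the free left pre-Lie algebra on the generator $\tun$; hence the pre-Lie morphism $\gamma$ is uniquely determined by its image $\gamma(\tun)=Z_{\diagramme{1}{}{{1/0}}{}{}{}{}{}}$. Because the extended operation $\circ$, the product $\star$ and the coproduct of Definition~\ref{defOG} are built only from the pre-Lie product on primitives together with the commutative product and coproduct of the symmetric algebra, the unique morphism of symmetric algebras $\varphi\colon\sym{\g{\TR}}\longrightarrow\sym{\g{\D}}$ restricting to $\gamma$ on $\g{\TR}$ automatically preserves all of $\circ$, $\star$ and $\De$. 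Under the isomorphisms $\Hgl\simeq\sym{\g{\TR}}$ and $\dual{\Hd}\simeq\sym{\g{\D}}$ of Theorem~\ref{structureOG} and Proposition~\ref{algEnvp}, this yields the desired Hopf algebra morphism. The forest formula $\varphi(t_1\cdots t_n)=\varphi(t_1)\cdots\varphi(t_n)$ is then immediate, since the disjoint union of forests in $\Hgl$ corresponds to the commutative product in $\sym{\g{\TR}}$, which $\varphi$ preserves by construction.

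For a single tree $t=B(t_1,\ldots,t_k)$, the heart of the argument is the identity
\[
B(t_1,\ldots,t_k)\;=\;(t_1 t_2\cdots t_k)\circ\tun
\]
inside $\sym{\g{\TR}}$, which I would prove by induction on $k$. The case $k=1$ reduces to the definition of the pre-Lie product on $\g{\TR}$: since $\tun$ has a single vertex, $t_1\circ\tun=B(t_1)$. For the inductive step, apply the Oudom--Guin relation $(xa)\circ b=x\circ(a\circ b)-(x\circ a)\circ b$ with $x=t_1$, $a=t_2\cdots t_k$ and $b=\tun$. By the induction hypothesis, $t_1\circ\bigl((t_2\cdots t_k)\circ\tun\bigr)=t_1\circ B(t_2,\ldots,t_k)$, which expands via the pre-Lie product on rooted trees as $B(t_1,t_2,\ldots,t_k)$ plus the sum of graftings of $t_1$ at all non-root vertices of $B(t_2,\ldots,t_k)$. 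The compensating term $(t_1\circ(t_2\cdots t_k))\circ\tun$ is computed via $a\circ(bc)=\sum a^{(1)}\circ b\cdot a^{(2)}\circ c$ and the primitivity of $t_1$ in $\sym{\g{\TR}}$, and unfolds using the induction hypothesis into exactly this same sum of non-root graftings, which therefore cancel.

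Applying $\varphi$, which preserves $\circ$, yields
\[
\varphi\bigl(B(t_1,\ldots,t_k)\bigr)\;=\;\bigl(\varphi(t_1)\cdots\varphi(t_k)\bigr)\circ Z_{\diagramme{1}{}{{1/0}}{}{}{}{}{}}.
\]
To match the right-hand side with $L(\varphi(t_1)\cdots\varphi(t_k))$ I would establish, by a short induction on the length of $a\in\sym{\g{}}$ using the rule $(ya)\circ x=y\circ(a\circ x)-(y\circ a)\circ x$, that $a\circ x\in\g{}$ whenever $x\in\g{}$. Consequently $(\varphi(t_1)\cdots\varphi(t_k))\circ Z_{\diagramme{1}{}{{1/0}}{}{}{}{}{}}$ already lies in $\g{\D}$, and its expansion in the basis $(Z_G)_{G\in\D}$ is precisely the sum defining $L(\varphi(t_1)\cdots\varphi(t_k))$ under the identification $Z_G\leftrightarrow G$.

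The main obstacle is the inductive identity $B(t_1,\ldots,t_k)=(t_1\cdots t_k)\circ\tun$: keeping track of the non-root graftings arising from both $x\circ(a\circ b)$ and $(x\circ a)\circ b$ requires a careful use of both Oudom--Guin relations together with the primitivity of the $t_i$ in $\sym{\g{\TR}}$, so that they cancel and leave only $B(t_1,\ldots,t_k)$.
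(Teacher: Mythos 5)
Your proposal is correct and rests on the same two pillars as the paper's proof: multiplicativity of $\varphi$ on forests, and the identity $B(t_{1},\dots,t_{k})=(t_{1}\cdots t_{k})\circ\tun$ translating the grafting operator into the extended Oudom--Guin product, which is then transported by $\varphi$ to give $\varphi(t)=L(\varphi(t_{1})\cdots\varphi(t_{k}))$. The differences are in logical direction and in what is proved versus cited. The paper takes for granted that $\varphi$ respects the Oudom--Guin structure (preserves $\star$ and the extended $\circ$) and then \emph{derives} multiplicativity for the commutative product by induction, via $t_{1}t_{2}=t_{1}\ast t_{2}-t_{1}\circ t_{2}$ and its length-$(n+1)$ analogue; you go the other way, taking the symmetric-algebra morphism extending $\gamma$ as the definition and arguing by functoriality that it preserves $\circ$, $\star$ and $\De$ --- both directions are legitimate and express the same equivalence. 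More substantively, the paper simply asserts the identity $B(t_{1},\dots,t_{k})=(t_{1}\cdots t_{k})\circ\tun$ as ``the words of Oudom and Guin,'' whereas you supply an inductive proof using $(xa)\circ b=x\circ(a\circ b)-(x\circ a)\circ b$ together with the primitivity of the $t_{i}$, tracking the cancellation of non-root graftings; your sketch of that cancellation is accurate, and it makes the argument self-contained where the paper relies on a citation. Your final observation that $a\circ x\in\g{}$ whenever $x\in\g{}$, needed to see that the right-hand side lands in $\g{\D}$ and coincides with $L$, is also a detail the paper leaves implicit.
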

\begin{proof}
	
	Let $t_1t_2$ be a rooted forest of $2$ rooted trees. We have $t_1t_2=t_1\ast t_2- t_1\circ t_2$ so, $\varphi(t_1t_2)=\varphi(t_1)\varphi(t_2)$. We assume now there exists an positive integer $n\geq2$ such as for all rooted forest $t_1\dots t_n$ of $n$ rooted trees we have $\varphi(t_1\dots t_n)=\varphi(t_{1})\dots\varphi(t_{n})$. Let $t_1\dots t_{n+1}$ a rooted forest of $n+1$ rooted trees. We have:
	\[t_1\dots t_{n+1}=t_1\ast t_2\dots t_{n+1}-(t_1\circ t_2)t_3\dots t_{n+1}-\sum_{i=1}^{n-2}t_2\dots t_{2+i-1}(t_1\circ t_{2+i})t_{2+i+1}\dots t_{n+1}.\]
	Then, by using the fact that $\phi$ respects the Oudom and Guin structure and the recurrence hypothesis, $\varphi(t_1\dots t_{n+1})=\varphi(t_{1})\dots\varphi(t_{n+1})$.
	
	Let $t$ be a rooted tree of degree at least 2. There exists an unique positive integer $k$ and a $k$-uple $(t_1,\dots,t_k)$ such as $t=B(t_1,\dots, t_k)$. Actually, in words of Oudom and Guin, the grafting operator $B$ corresponds to the extended pre-Lie product of a rooted forest on the tree of degree $1$. So,  $t=B(t_1,\dots, t_k)=t_1\dots t_k \circ \tun$ and $\varphi(t)=L(\varphi(t_{1})\dots\varphi(t_{k}))$.
\end{proof}
\begin{cor}\label{corprovenotfree}
	Let $t$ be a rooted tree. If $t$ has at least one vertex of fertility strictly greater than two, then $\gamma(Z_{t})=0$ so $\varphi$ is not an injective morphism.
\end{cor}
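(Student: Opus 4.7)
Write $t=B(t_{1},\dots,t_{k})$. The plan is to prove by induction on $\#V(t)$ that $\varphi(t)=0$ whenever $t$ has a vertex of fertility strictly greater than $2$, and then to deduce non-injectivity by exhibiting an explicit nonzero tree of this type. I would first verify from the Oudom--Guin recursion (this is a short computation using $(xa)\circ b = x\circ(a\circ b)-(x\circ a)\circ b$ and $a\circ(bc)=\sum_{(a)}(a^{(1)}\circ b)(a^{(2)}\circ c)$) that in the free pre-Lie algebra $\g{\TR}$ one has the identity $B(t_{1},\dots,t_{k}) = t_{1}\cdots t_{k}\circ \tun$. Since $\varphi$ is a morphism of Oudom--Guin Hopf algebras extending the pre-Lie morphism $\gamma$, this yields
\[
\varphi(t) \;=\; \varphi(t_{1})\cdots\varphi(t_{k})\circ \diagramme{1}{}{{1/0}}{}{}{}{}{} \;=\; L\bigl(\varphi(t_{1})\cdots\varphi(t_{k})\bigr).
\]
If the offending vertex lies strictly below the root, say inside some $t_{j}$, then by the inductive hypothesis $\varphi(t_{j})=0$, so the whole product $\varphi(t_{1})\cdots\varphi(t_{k})$ vanishes in $\dual{\Hd}$ and hence $\varphi(t)=L(0)=0$ by linearity.

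The real work is the base case, where the root of $t$ has fertility $k\geq 3$ and each subtree $t_{j}$ is itself ``good''. Here I must show that $L(D_{1}\cdots D_{k}) = D_{1}\cdots D_{k}\circ \diagramme{1}{}{{1/0}}{}{}{}{}{} = 0$ in $\g{\D}$ for any dissection diagrams $D_{1},\dots,D_{k}$ and any $k\geq 3$. I would peel off one diagram at a time using the Oudom--Guin rules written above until the expression is reduced to a signed combination of terms of the form $L(D)$ and $L(D'D'')$, each of which is given explicitly by the chord-insertion formulas of propositions \ref{valLundiagramme} and \ref{valLdeuxdiagrammes}. Substituting those formulas and repeatedly using the pre-Lie relation, I expect the resulting sum to telescope to zero. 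The combinatorial content is that the digon underlying $\diagramme{1}{}{{1/0}}{}{}{}{}{}$ offers only two natural positions in which to insert a sub-diagram, so any attempt to insert a third one is forced to coincide with one of the previous two and is cancelled by an antisymmetric combination of the pre-Lie products.

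Once $\varphi(t)=0$ is established for every $t$ having a vertex of fertility $>2$, non-injectivity of $\varphi$ is immediate: the corolla $\tquatreun = B(\tun,\tun,\tun)$ is a nonzero basis vector of $\Hgl$ whose root has fertility $3$, so $\varphi(\tquatreun)=0$. The main obstacle is certifying the base-case cancellation. It cannot follow from the pre-Lie axioms alone, since in $\g{\TR}$ the analogous computation yields $\tun\tun\tun\circ\tun = \tquatreun \neq 0$; the vanishing in $\g{\D}$ is a genuine identity specific to the dissection-diagram pre-Lie algebra, depending on the precise $x$-dependent coefficients appearing in the insertion formulas of propositions \ref{valLundiagramme} and \ref{valLdeuxdiagrammes}, and carrying out the cancellation cleanly will be the bookkeeping-heavy part of the argument.
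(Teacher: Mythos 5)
You have the right skeleton, and it is the paper's: $\varphi(t)=L(\varphi(t_1)\cdots\varphi(t_k))$, induction disposes of a bad vertex sitting strictly below the root, and everything comes down to the identity $(Z_{D_1}\cdots Z_{D_k})\circ Z_{X_1}=0$ for all $k\geq 3$ and all nonempty dissection diagrams $D_i$ (writing $X_1$ for the dissection diagram of degree $1$). You are also right that this cannot follow from the pre-Lie axioms alone. But that identity \emph{is} the content of the corollary, and you do not prove it: ``I expect the resulting sum to telescope to zero'' is a plan, not an argument, and your diagnosis of the mechanism is off --- no delicate $x$-dependent coefficient cancellation from the insertion formulas of Propositions \ref{valLundiagramme} and \ref{valLdeuxdiagrammes} is involved, and those formulas are never needed here.

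The actual argument is a duality statement. For two diagrams the paper computes
\[(Z_UZ_V)\circ Z_{X_1}=Z_U\circ(Z_V\circ Z_{X_1})-(Z_U\circ Z_V)\circ Z_{X_1}=(1+\delta_{U,V})\sum_{G}\big(Z_{UV}\ot Z_{X_1}\big)\big(\De(G)\big)\,Z_G,\]
the two terms recombining through the coassociativity $(Id\ot\De)\circ\De=(\De\ot Id)\circ\De$: the subtraction exactly strips off the projection $l$ and leaves the pairing of $Z_{UV}\ot Z_{X_1}$ against the full coproduct of $G$. Running the same manipulation one level up shows that the alternating sum you call a telescope does not collapse to $0$ by itself; it collapses to the residual term $\sum_G(Z_{D_1D_2D_3}\ot Z_{X_1})(\De(G))Z_G$, and it is \emph{this} that must be shown to vanish. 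It does, for a combinatorial reason independent of $x$: a term $w\ot X_1$ in $\De(G)$ arises from contracting a single chord of $G$, and contracting one chord of a dissection diagram produces at most two nonempty components (exactly the dichotomy spelled out in section \ref{renumcoproduititere}), so $w$ is never a product of three or more diagrams. That is the missing idea; your heuristic about ``two positions in the digon'' gestures at it but is never turned into a statement about the coproduct of $\Hd$. Once $n=3$ is settled, the induction for $n\geq 4$ is purely formal, as every term produced by $(xa)\circ b=x\circ(a\circ b)-(x\circ a)\circ b$ is again of the form $(Z_{W_1}\cdots Z_{W_n})\circ Z_{X_1}$ with $n$ factors and vanishes by hypothesis. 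Your final step (a corolla whose root has fertility $3$ is a nonzero element of $\Hgl$ sent to $0$) is fine.
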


\begin{proof}
	It is sufficient to prove the statement:
	\[\forall n\in\N^{*}\setminus \{1,2\},~\forall (D_1,\dots,D_n)\in(\D^{+})^n,~Z_{D_1\dots D_n}\circ Z_{\diagramme{1}{}{{1/0}}{}{}{}{}{}}=0.\]
	We prove the result by induction. Let $U$ and $V$ be two dissection diagrams. We recall that $Id$ is the identity morphism and $l$ is the projection on $\D^{+}$. By definition, 
	\ud{0.5}
	\begin{align*}
	L(UV)=&\sum\limits_{\substack{G\in\D \\ G \text{ diagram}}}\bigg(Z_{UV}\circ Z_{\diagramme{1}{}{{1/0}}{}{}{}{}{}}\bigg)(G)G\\
	=&\sum\limits_{\substack{G\in\D \\ G \text{ diagram}}}\bigg(Z_{U}\circ (Z_{V}\circ Z_{\diagramme{1}{}{{1/0}}{}{}{}{}{}})-\left(Z_{U}\circ Z_{V}\right)\circ Z_{\diagramme{1}{}{{1/0}}{}{}{}{}{}}\bigg)(G)G\\
	=&\sum\limits_{\substack{G\in\D \\ G \text{ diagram}}}\bigg((Z_{U}\ot Z_{V}\ot Z_{\diagramme{1}{}{{1/0}}{}{}{}{}{}}) \circ(Id\ot \De)\circ\De\bigg)(G)G\\
	-&\sum\limits_{\substack{G\in\D \\ G \text{ diagram}}}\bigg((Z_{U}\ot Z_{V}\ot Z_{\diagramme{1}{}{{1/0}}{}{}{}{}{}}) \circ(\De\ot Id)\circ(l\ot Id)\circ\De\bigg)(G)G\\
	=&\begin{cases}
	\sum\limits_{\substack{G\in\D \\ G \text{ diagram}}}\bigg(Z_{UV}\ot Z_{\diagramme{1}{}{{1/0}}{}{}{}{}{}}\bigg)\bigg(\De(G)\bigg)G& \mbox{if } U\neq V,\\ 
	2\sum\limits_{\substack{G\in\D \\ G \text{ diagram}}}\bigg(Z_{UV}\ot Z_{\diagramme{1}{}{{1/0}}{}{}{}{}{}}\bigg)\bigg(\De(G)\bigg)G& \mbox{else.}
	\end{cases}
	\end{align*}
	Let now $D_1$, $D_2$ and $D_{3}$ three dissection diagrams. We have:
	\[Z_{D_1D_2D_3}\circ Z_{\diagramme{1}{}{{1/0}}{}{}{}{}{}}=Z_{D_1}\circ \bigg( Z_{D_2D_{3}} \circ Z_{\diagramme{1}{}{{1/0}}{}{}{}{}{}} \bigg)- Z_{\left(D_1\circ D_2\right)D_3}\circ Z_{\diagramme{1}{}{{1/0}}{}{}{}{}{}} -Z_{D_2\left( D_1\circ D_{3}\right)} \circ Z_{\diagramme{1}{}{{1/0}}{}{}{}{}{}}.\]
	By the previous computation, $Z_{D_1D_2D_3}\circ Z_{\diagramme{1}{}{{1/0}}{}{}{}{}{}}=0$.
	
	We assume now there exists a positive integer $n$ greater than or equal to $3$ such as for all  $n$-uple $(D_1,\dots,D_n)$ of dissection diagrams $Z_{D_1\dots D_n}\circ Z_{\diagramme{1}{}{{1/0}}{}{}{}{}{}}=0$. Let $D_1$, \dots, $D_{n+1}$ be $n+1$ dissection diagrams. We have:
	\begin{align*}
	Z_{D_1\dots D_{n+1}}\circ Z_{\diagramme{1}{}{{1/0}}{}{}{}{}{}}=&Z_{D_1}\circ \bigg( Z_{D_2\dots D_{n+1}} \circ Z_{\diagramme{1}{}{{1/0}}{}{}{}{}{}} \bigg)- \bigg( Z_{D_1}\circ Z_{D_2\dots D_{n+1}} \bigg) \circ Z_{\diagramme{1}{}{{1/0}}{}{}{}{}{}}\\
	=&Z_{D_1}\circ \bigg( Z_{D_2\dots D_{n+1}} \circ Z_{\diagramme{1}{}{{1/0}}{}{}{}{}{}} \bigg)- Z_{\left(D_1\circ D_2\right)D_3\dots D_{n+1}}\circ Z_{\diagramme{1}{}{{1/0}}{}{}{}{}{}}\\
	-&\sum_{i=3}^{n-1}Z_{D_2\dots Z_{D_i}\left( D_1\circ D_{i+1}\right)D_{i+2}\dots D_{n+1}} \circ Z_{\diagramme{1}{}{{1/0}}{}{}{}{}{}}\\
	-&Z_{D_2\dots Z_{D_n}\left( D_1\circ D_{n+1}\right)} \circ Z_{\diagramme{1}{}{{1/0}}{}{}{}{}{}}\\
	=&0
	\end{align*}
\end{proof}

\begin{cor}\label{absliberte}
	The pre-Lie algebra generated by $\diagramme{1}{}{{1/0}}{}{}{}{}{}$ is not free, the pre-Lie algebra $\g{\D}$ is not free either. 
\end{cor}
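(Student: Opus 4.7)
I would use the pre-Lie morphism $\gamma:\g{\TR}\to\g{\D}$ together with Corollary \ref{corprovenotfree}, which provides explicit non-zero elements of $\g{\TR}$ in the kernel of $\gamma$. For instance the rooted tree $B(\tun,\tun,\tun)$ has a vertex of fertility three, is non-zero in $\g{\TR}$ because rooted trees form a basis, and by Corollary \ref{corprovenotfree} satisfies $\gamma(B(\tun,\tun,\tun))=0$. Throughout the argument I write $\mathcal{A}$ for the sub-pre-Lie algebra of $\g{\D}$ generated by $\diagramme{1}{}{{1/0}}{}{}{}{}{}$, so that $\mathcal{A}$ is precisely the image of $\gamma$.

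For the first assertion I would argue by universal property. By Theorem \ref{prelielibreun}, $\g{\TR}$ is the free pre-Lie algebra on $\tun$, so $\gamma$ is the unique pre-Lie morphism sending $\tun$ to $\diagramme{1}{}{{1/0}}{}{}{}{}{}$, and it induces a surjection $\g{\TR}\twoheadrightarrow\mathcal{A}$. If $\mathcal{A}$ were a free pre-Lie algebra, then because it is generated by the single element $\diagramme{1}{}{{1/0}}{}{}{}{}{}$ it would be free on that one generator, hence abstractly isomorphic to $\g{\TR}$. The universal-property isomorphism $\g{\TR}\to\mathcal{A}$ sending $\tun$ to $\diagramme{1}{}{{1/0}}{}{}{}{}{}$ would then coincide with $\gamma$, contradicting $\ker\gamma\neq 0$. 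Hence $\mathcal{A}$ is not free.

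For the second assertion I would argue by contradiction. Suppose $\g{\D}$ is free on some graded generating set $G$. Because the pre-Lie product strictly raises degree, any minimal generating set must contribute, in each degree, a basis of the indecomposable quotient; but the degree-one component $(\g{\D})_{1}$ is one-dimensional with basis $\diagramme{1}{}{{1/0}}{}{}{}{}{}$, so up to rescaling one may assume $\diagramme{1}{}{{1/0}}{}{}{}{}{}\in G$. Using the Chapoton-Livernet description of the free pre-Lie algebra on $G$ as $G$-labelled rooted trees with grafting as product, the inclusion $\{\diagramme{1}{}{{1/0}}{}{}{}{}{}\}\subseteq G$ induces an embedding of the free pre-Lie algebra on one generator into $\g{\D}$, whose image is exactly $\mathcal{A}$. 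This would make $\mathcal{A}$ free on one generator, contradicting the first assertion.

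The only mildly delicate point is the reduction $\diagramme{1}{}{{1/0}}{}{}{}{}{}\in G$ in the second step, which rests on the graded structure of $\g{\D}$ together with the fact that the pre-Lie product increases degree; everything else is a clean chain of universal-property deductions from Theorem \ref{prelielibreun} and Corollary \ref{corprovenotfree}.
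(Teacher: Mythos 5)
Your proposal is correct and follows essentially the same route as the paper: the first assertion is deduced from the non-injectivity of $\gamma$ established in Corollary \ref{corprovenotfree} (the paper states this as a ``direct consequence''; you merely spell out the universal-property argument it leaves implicit), and the second assertion uses exactly the paper's Chapoton--Livernet argument that a free generating set of $\g{\D}$ would have to contain the degree-one diagram, forcing the subalgebra it generates to be free on one generator. Your extra care about why the degree-one diagram can be taken as a free generator (via the graded indecomposables) is a welcome precision but does not change the proof.
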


\begin{proof}
	The first point is a direct consequence of corollary \ref{corprovenotfree}. For the second point we assume $\g{\D}$ is free as pre-Lie algebra. Let $V$ be the vector space such that $\g{\D}$ is the pre-Lie algebra freely generated by $V$. Let $\mathcal{B}$ be a basis of $V$. By Chapoton and Livernet \cite[corollary 1.10]{Chapoton2001} $\g{\D}$ is isomorphic to the pre-Lie algebra of rooted trees decorated by $\mathcal{B}$ with the grafting product. As there is just one dissection diagram of degre 1, this one is an element of $\mathcal{B}$. As a consequence, the pre-Lie algebra generated by $\diagramme{1}{}{{1/0}}{}{}{}{}{}$ is the pre-Lie algebra of no decorated rooted trees with the grafting product so, it is a free pre-Lie algebra, which is impossible.  
\end{proof}

\begin{conj}\label{conjnoyau}
	We call $\mathcal{N}$ the vector space defined by:
	\[\mathcal{N}=Vect(F\in\FRt,~\exists s\in V(F), fert(s)\geq 3)\]
	\emph{i.e.} the vector space generated by the non sub-binary forests.
	Then, $\mathcal{N}$ is the kernel of the Hopf algebra morphism $\varphi$.
\end{conj}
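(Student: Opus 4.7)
The plan is to establish the two inclusions $\mathcal{N} \subseteq \ker(\varphi)$ and $\ker(\varphi) \subseteq \mathcal{N}$ separately. The first inclusion is essentially immediate from Corollary \ref{corprovenotfree}: if $F = t_1 \cdots t_n \in \mathcal{N}$, then at least one factor $t_i$ is a rooted tree containing a vertex of fertility at least three, so $\varphi(t_i) = 0$ by the corollary, and multiplicativity of $\varphi$ yields $\varphi(F) = \varphi(t_1) \cdots \varphi(t_n) = 0$.

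For the reverse inclusion, I would exploit the direct sum decomposition $\Hgl = \mathcal{N} \oplus \HsbT$ as graded vector spaces, which is valid because $\mathcal{N}$ is a Hopf biideal. Under this decomposition, one has $\ker(\varphi) = \mathcal{N} \oplus (\ker(\varphi) \cap \HsbT)$, so it suffices to prove that the restriction of $\varphi$ to the sub-binary forests $\HsbT$ is injective. The natural approach, suggested by the very labels of propositions \ref{etapeun}, \ref{etapedeux}, and \ref{triangulaire}, is a triangularity argument: one defines an ordering on dissection diagrams (say, a total order refining the degree and some combinatorial statistic) and a map $F \mapsto D_F$ sending each sub-binary forest to a distinguished dissection diagram, such that
\[ \varphi(F) = c_F \, D_F + \sum_{D' < D_F} \lambda_{F,D'} \, D', \qquad c_F \neq 0, \]
and such that $F \mapsto D_F$ is itself injective on $\HsbT$. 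The triangularity then forces linear independence of $\{\varphi(F) : F \in \HsbT\}$, giving the desired injectivity.

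The construction of $D_F$ should be built inductively through the recursive description of $\varphi$: if $t = B(t_1, \ldots, t_k)$ is a sub-binary tree (so $k \leq 2$), then $\varphi(t) = L(\varphi(t_1)\cdots\varphi(t_k))$, and the leading diagram $D_t$ should be obtained by a specific inserting rule applied to $D_{t_1}$ and (if present) $D_{t_2}$ within a new chord inserted at the root. For a sub-binary forest $F = u_1 \cdots u_m$, the leading diagram $D_F$ should be read off as the disjoint union $D_{u_1} \cdots D_{u_m}$ in the commutative product of $\dual{\Hd}$. Propositions \ref{etapeun} and \ref{etapedeux} would respectively track the leading terms of $L$ on a single diagram and on a product of two diagrams, showing that the insertion process lands on the expected $D_t$ with a computable nonzero coefficient; proposition \ref{triangulaire} would then package these into the full triangularity statement.

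The main obstacle, and the reason the statement is only conjectural, is precisely the control of the inserting process $L$ in the formula $\varphi(B(t_1, \ldots, t_k)) = L(\varphi(t_1)\cdots\varphi(t_k))$. Because $L$ involves pairing with $Z_{D_1\cdots D_n} \circ Z_{\diagramme{1}{}{{1/0}}{}{}{}{}{}}$ and hence iterated Oudom–Guin products, a single sub-binary forest produces many dissection diagrams with coefficients that are polynomials in $x$, and one must prove that no unwanted cancellation brings the coefficient of $D_F$ to zero and that no sub-binary forest distinct from $F$ can contribute a higher- or equal-order term to $D_F$. The strict sub-binarity constraint ($k \leq 2$ at every vertex) is what should make this combinatorial matching clean: it mirrors the natural way in which at most two subtrees can be consistently grafted through a single new chord without creating the cancellation responsible for $\varphi(t)=0$ when $\mathrm{fert}(s) \geq 3$.
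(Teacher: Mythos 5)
The statement you are addressing is presented in the paper only as Conjecture \ref{conjnoyau}: the paper gives no proof of it and explicitly describes Propositions \ref{etapeun}, \ref{etapedeux} and \ref{triangulaire} as first steps towards it. There is therefore no argument in the paper to compare yours against, and your proposal has to be judged on its own terms.

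Your easy inclusion $\mathcal{N}\subseteq\ker\varphi$ is correct and is exactly what Corollary \ref{corprovenotfree} plus the multiplicativity of $\varphi$ gives. Your reduction of the converse to the injectivity of $\varphi$ restricted to $\HsbT$, via the graded splitting $\Hgl=\mathcal{N}\oplus\HsbT$, is also correct, and the triangularity strategy you sketch is visibly the one the paper is setting up. But the proposal does not close the gap, and you say so yourself. Proposition \ref{triangulaire} only yields $Z_{D}(\varphi(t))=0$ when $m(D)<m(t)$, i.e.\ block triangularity with respect to the length statistic; injectivity additionally requires the diagonal blocks to have full column rank. That amounts to producing, for each sub-binary forest $F$, a distinguished diagram $D_{F}$ with $Z_{D_{F}}(\varphi(F))\neq 0$, an injective assignment $F\mapsto D_{F}$, and a proof that no other forest in the same block contributes to $D_{F}$ at equal or higher order. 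Propositions \ref{etapeun} and \ref{etapedeux} give injectivity of $L$ on basis elements (single diagrams and products of two diagrams), which is strictly weaker than the linear independence of the images $\varphi(F)$: the coefficients produced by the insertion process are polynomials in $x$ and could vanish or cancel for particular values of $x$ (the paper's own computations already single out $x=-1$ as exceptional for the dual question of cofreedom, so parameter-dependent degeneration is a real possibility, not a formality). Until that cancellation analysis is carried out, your argument is a plausible programme rather than a proof, which is consistent with the paper's decision to leave the statement as a conjecture.
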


We consider now, as a departure space, the quotient Hopf algebra $\HsbT$ of sub-binary rooted trees. We keep the same notation $\varphi$ for this new morphism. We have:
\[\varphi:\left\{
\begin{array}{rcl}
\HsbT &\longrightarrow & \dual{\Hd}\\ 
B(t)\in\sbT &\longrightarrow & L(\varphi(t)),\\
B(t_{1}t_{2})\in\sbT &\longrightarrow & L(\varphi(t_{1})\varphi(t_{2})),\\
t_{1}\dots t_{n}\in\sbF &\longrightarrow & \varphi(t_{1})\dots\varphi(t_{n}),\\
\end{array}
\right.\]
where $L$ is the following linear map:
\[L:\left\{
\begin{array}{rcl}
\D^{+}\ot\K \oplus \D^{+}\ot\D^{+} &\longrightarrow & \D^{+}\\
D\ot1 &\longrightarrow & \sum\limits_{\substack{G\in\D \\ G \text{ diagram}}}\bigg(Z_{D}\circ Z_{\diagramme{1}{}{{1/0}}{}{}{}{}{}}\bigg)(G)G,\\ 
D_{1}\ot D_{2} &\longrightarrow & \sum\limits_{\substack{G\in\D \\ G \text{ diagram}}}\bigg(Z_{D_{1}D_{2}}\circ Z_{\diagramme{1}{}{{1/0}}{}{}{}{}{}}\bigg)(G)G. 
\end{array}
\right.\]

\Ex{}
We consider the rooted tree $t=\tdeux$. We know that $t=B(\tun)$ so we have:
\[\varphi(t)=L\po\diagramme{1}{}{{1/0}}{}{}{}{}{}\pf=2\diagramme{2}{}{{1/2,2/0}}{}{}{}{}{}+2\diagramme{2}{}{{1/0,2/0}}{}{}{}{}{}+(1+x)\diagramme{2}{}{{1/0,2/1}}{}{}{}{}{}.\]

For the rooted tree $t=\ttroisun=B(\tun\tun)$ we have:
\begin{align*}
	\varphi(t)=&L\po\diagramme{1}{}{{1/0}}{}{}{}{}{}\diagramme{1}{}{{1/0}}{}{}{}{}{}\pf\\
	=&2\diagramme{3}{}{{1/0,2/0,3/0}}{}{}{}{}{}+2x\diagramme{3}{}{{1/0,2/1,3/1}}{}{}{}{}{}+2\diagramme{3}{}{{1/2,2/0,3/2}}{}{}{}{}{}+2\diagramme{3}{}{{1/3,2/3,3/0}}{}{}{}{}{}\\
	+&2\diagramme{3}{}{{1/3,2/1,3/0}}{}{}{}{}{}+2x\diagramme{3}{}{{1/0,2/3,3/1}}{}{}{}{}{}+2\diagramme{3}{}{{1/2,2/0,3/0}}{}{}{}{}{}+2\diagramme{3}{}{{1/0,2/0,3/2}}{}{}{}{}{}.
\end{align*}  

\begin{lemma}
	Let $t$ be a sub-binary rooted tree of positive degree $n$. Its image $\varphi(t)$ is homogeneous of degree $n$.
\end{lemma}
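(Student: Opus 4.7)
The plan is to argue by induction on $n = \deg(t)$. For the base case $n = 1$, the unique sub-binary tree of degree $1$ is $\tun$, and by construction $\varphi(\tun) = \gamma(\tun) = \diagramme{1}{}{{1/0}}{}{}{}{}{}$, which is homogeneous of degree $1$.

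For $n \geq 2$, write $t = B(t_1 \cdots t_k)$. The sub-binary hypothesis forces $k \in \{1,2\}$, each $t_i$ is itself sub-binary of degree strictly less than $n$, and $\deg(t_1) + \cdots + \deg(t_k) = n-1$. By the inductive hypothesis each $\varphi(t_i)$ is homogeneous of degree $\deg(t_i)$, and since the product of $\dual{\Hd}$ is graded, the element $\varphi(t_1) \cdots \varphi(t_k)$ is homogeneous of degree $n-1$. Using $\varphi(t) = L(\varphi(t_1) \cdots \varphi(t_k))$, the claim reduces to the statement that $L$ raises the total degree by exactly $1$.

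To verify this, recall that by construction $L(U) = \pi(U \circ Z_{\diagramme{1}{}{{1/0}}{}{}{}{}{}})$ where $\pi$ is the linear projection $\sym{\g{\D}} \twoheadrightarrow \g{\D}$ onto the span of single dissection diagrams. The key auxiliary fact is that the Oudom-Guin product $\circ$ on $\sym{\g{\D}}$ is homogeneous, mapping the $(m,k)$-bidegree into degree $m+k$. On $\g{\D} \otimes \g{\D}$ this is immediate from $c(D_1,D_2;D) = (Z_{D_1} \otimes Z_{D_2}) \circ \De(D)$, which vanishes unless $\deg(D) = \deg(D_1) + \deg(D_2)$ because $\De$ preserves the grading of $\Hd$. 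The extension to all of $\sym{\g{\D}}$ is then a routine induction on the length of the left factor using the recursive formulas of definition~\ref{defOG}, since both $\De$ and the commutative product on $\sym{\g{\D}}$ are graded. Applied with $U$ of degree $n-1$ and the right factor $Z_{\diagramme{1}{}{{1/0}}{}{}{}{}{}}$ of degree $1$, this yields $\deg \varphi(t) = n$. The only step requiring genuine attention is the grading of the extended pre-Lie product; the sub-binary hypothesis itself enters only to guarantee at most two subtrees in the decomposition of $t$, so that the restricted form of $L$ on $\D^{+} \otimes \K \oplus \D^{+} \otimes \D^{+}$ suffices.
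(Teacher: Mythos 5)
Your argument is correct and is exactly the content behind the paper's one-line proof (``It is true by construction''): you simply make explicit that the Oudom--Guin product is graded because $\De$ is, that $L$ therefore raises degree by one, and that the recursion over $t=B(t_1\cdots t_k)$ does the rest. No discrepancy with the paper's approach, only a fuller write-up of it.
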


\begin{proof}
	It is true by construction.
\end{proof}

\ud{0.7}
We aim at now determining $L(D)$ and $L(D_{1}D_{2})$ for all dissection diagrams $D$, $D_{1}$ and $D_{2}$. We formalize for this the dissection diagram construction by an inserting process of a chord in one or two dissection diagrams. We need more notations. Let $D$ a dissection diagram of degree $n\geq1$ and $i$ an integer in the interval $\llbracket0,n\rrbracket$.
\begin{itemize}
	\item The integer $i$ is the vertex $i$ (or the root, if $i=0$). Sometimes the vertex $i$ is denoted by $S_{D,i}$.
	\item We call fertility of the vertex $i$ the number $f_{D}(i)$ of chords of $D$ such that the vertex $i$ be one of its extremities. 
	\item The set of the chords of $D$ with the vertex $i$ in common labeled by respecting counter-clockwise is defined by: $A_{D,i}=\{a^{i}_{D,1},\dots,a^{i}_{D,f_{D}(i)}\}$.
	\item Let $u<v$ integers between 0 and $n$. We denote by $\{u,v\}$ a chord connecting the vertices $u$ and $v$. As the dissection diagram has a natural orientation we don't recall the orientation, of the chord $\{u,v\}$ in its notation.
	\item We consider an integer $t\in\llbracket0,f_{D}(i)\rrbracket$. We define two subsets of chords: $A^{t,1}_{D,i}=\{a^{i}_{D,1},\dots,a^{i}_{D,t}\}$ and $A^{t,2}_{D,i}=\{a^{i}_{D,t+1},\dots,a^{i}_{D,f_{D}(i)}\}$.
\end{itemize}
If there is no ambiguity on the considered dissection diagram, we forget its name in notations.\\

\Ex{} We want to illutrate the way of labeling chords with a common vertex. In the first example, we consider a vertex different from the root; we color it in red. In the second one, we consider the root and we color it in yellow. In the two cases, chords with the considered common vertex are colored in red. To relieve notations, the $s^{th}$ chords from the vertex $i$ is written $s$ instead of $a^{i}_{D,s}$. 
\begin{align*}
&\diagnot{{1/2,5/9,6/8,7/8}}{{3/4,2/4,4/0,9/4}}{4} , 
&\diagnot{{2/3,3/4,5/6,6/4,7/9,8/7}}{{9/0,4/0,1/0}}{0} .
\end{align*}

\subsection{Insertion of a chord in a vertex of a dissection diagram $D$}\label{fonctionouverture}
We consider $D$ a dissection diagram of degree $n\geq1$. We want to insert a new chord in $D$. To do that we start by choosing a vertex $i$ in $D$, we split $i$ in two vertices $s_{1}$ and $s_{2}$ and the chords of $D$ with $i$ in common too. The new object is not a dissection diagram. It is sufficient to build the chord  between $s_{1}$ and $s_{2}$ to obtain again a dissection diagram. By this way, we can build all dissection diagrams $G$ of degree  $n+1$ with at least one chord $a$ such that  $q_{\{a\}}(G)=D$.

\subsubsection{The chosen vertex of $D$ is different from the root.}
We choose a vertex of $D$ different from its root, so we just consider a positive integer $i$ in $\llbracket1,n\rrbracket$. Let  $t\in\llbracket0,f(i)\rrbracket$ be a integer, used to split the chord of $D$ with the common vertex $i$. We define the following map:
\[\phi_{i,t}:\left\{
\begin{array}{rcl}
\Ch(D) &\longrightarrow &\{\{u,v\} \mbox{, } 0\leq u<v\leq n+1\} \\
\{u,v\} &\longrightarrow & \begin{cases} 
\{u,v\}& \mbox{ if } (u\leq i-1  \mbox{ and } v\leq i-1 ) \\
& \mbox{ or } (u\leq i-1,~ v=i  \mbox{ and } \{u,v\}\in A^{t,1}_{i}),\\
\{u,v+1\}& \mbox{ if }(u\leq i-1,~ v=i \mbox{ and } \{u,v\}\in A^{t,2}_{i})\\
& \mbox{ or } (u=i \mbox{ and }\{u,v\}\in A^{t,1}_{i}),\\
\{u+1,v+1\}& \mbox{ if }(u=i \mbox{ and } \{u,v\}\in A^{t,2}_{i}) \mbox{ or } u\geq i+1 .
\end{cases}
\end{array}
\right.\]
With this map, we consider a diagram $\tilde{G}_{D,i,t}$ of degree $n+1$ which is \emph{open} between the vertices $i$ and $i+1$ with $\Ch(\tilde{G}_{D,i,t})=\phi_{i,t}(\Ch(D))$. This new diagram is not a dissection diagram, but an intermediate object in the definition of the insertion process.  
\Ex{}
We consider some dissection diagrams $D$ and we give their diagram $\tilde{G}_{D,i,t}$. The vertex $i$ to split is colored in red, the chords of $D$ in $A^{t,1}_{D,i}$ are colored in blue and the chords in $A^{t,2}_{D,i}$ are colored in green.   
\begin{enumerate}
	\item For $D=\diagramme{9}{n}{{1/0,3/4,5/4,7/8,8/1,9/0}}{}{{2/1}}{{4/2,6/2}}{{2}}{}$, we have: $\tilde{G}_{D,2,1}=\diagramme{10}{n}{{1/0,4/5,6/5,8/9,9/1,10/0}}{}{{2/1}}{{5/3,7/3}}{{2,3}}{2}$. 
	\item For $D=\diagramme{7}{n}{{2/1,4/5,5/6}}{}{{1/7,3/7,6/7}}{{7/0}}{{7}}{}$, we have: $\tilde{G}_{D,7,3}=\diagramme{8}{n}{{2/1,4/5,5/6}}{}{{1/7,3/7,6/7}}{{8/0}}{{7,8}}{7}$. 
\end{enumerate}

It is then possible to define an insertion endomorphism homogeneous of degree 1 of the vector space of dissection diagrams. We consider a positive integer $i$ (choice of the vertex to plit to make insertion) and a non negative integer $t$ (partition of the chord with the vertex $i$ in common). We define the map $\Gamma_{i,t}$ by:
\[\Phi_{i,t}:\left\{
\begin{array}{rcl}
(\D)_{n}&\longrightarrow&(\D)_{n+1} \\
D&\longrightarrow& \begin{cases}
G_{D,i,t} \mbox{ with } \Ch(G_{D,i,t})=\phi_{i,t}(\Ch(D))\cup\{i,i+1\} &\mbox{if } i\leq n \mbox{ and } t\leq f_{D}(i),\\
0 & \mbox{else.}
\end{cases} 
\end{array}
\right.\]
\Rq{}  Let $D$ be a dissection diagram, $i$ be a vertex and $t$ be an integer in $\llbracket0, f_{D}(i)\rrbracket$.
\begin{enumerate}
	\item We easily know the orientation of the chord $\{i,i+1\}$ in $G_{D,i,t}=\Phi_{i,t}(D)$. Indeed, there exists an unique integer $l(i)$ such that the chord labeled by $i$ is the element $a^{i}_{D,l(i)}$ of $A_{D,i}$. If  $t\leq l(i)-1$ then  $\{i,i+1\}$ is labeled by $i$ else $\{i,i+1\}$ is labeled by $i+1$. 
	\item  The sum of maps $\Phi_{i,t}$ is called operation 1. 
\end{enumerate}

\Ex{} We use the two previous examples by keeping the same color code. The chord inserted is colored in red.
\begin{enumerate}
	\item For $D=\diagramme{9}{n}{{1/0,3/4,5/4,7/8,8/1,9/0}}{}{{2/1}}{{4/2,6/2}}{{2}}{}$, we obtain $G_{D,2,1}=\diagramme{10}{n}{{1/0,4/5,6/5,8/9,9/1,10/0}}{{3/2}}{{2/1}}{{5/3,7/3}}{{2,3}}{}$. 
	\item For $D=\diagramme{7}{n}{{2/1,4/5,5/6}}{}{{1/7,3/7,6/7}}{{7/0}}{{7}}{}$, we have $G_{D,7,3}=\diagramme{8}{n}{{2/1,4/5,5/6}}{{7/8}}{{1/7,3/7,6/7}}{{8/0}}{{7,8}}{}$. 	
\end{enumerate}

\subsubsection{The chosen vertex of $D$ is its root.}
We consider now the root of $D$. Let $\tau\in\llbracket0,f(0)\rrbracket$ be an integer to split the chords connected with the root. There exists an unique integer $s$ in $\llbracket1,n\rrbracket$ such that  $a^{0}_{D,\tau}=\{0,s\}$. We choose an element $\lambda\in\{0,1\}$ and define two maps.

\paragraph{Case 1: $\lambda=0$.} The map $\phi^{\lambda}_{0,\tau}$ is defined by:
\[\phi^{\lambda}_{0,\tau}:\left\{
\begin{array}{rcl}
\Ch(D) &\longrightarrow &\{\{u,v\} \mbox{, } 0\leq u<v\leq n+1\} \\
\{u,v\} &\longrightarrow & \begin{cases} 
\{u,v\}& \mbox{ if } u\geq 1 \mbox{ or } (u=0  \mbox{ and } v\leq s-1), \\
\{v,n+1\}& \mbox{ if } u=0 \mbox{ and } v\geq s.
\end{cases}
\end{array}
\right.\]
We call $\tilde{G}^{\lambda}_{D,0,\tau}$ the diagram of degree $n+1$, \emph{open} between the root and the vertex $n+1$ such that $\Ch(\tilde{G}^{\lambda}_{D,0,\tau})=\phi^{\lambda}_{0,\tau}(\Ch(D))$. The map $\phi^{\lambda}_{0,\tau}$ builds an \emph{open} diagram by creating the vertex $n+1$. 

\Ex{1} We choose the dissection diagram $D=\diagramme{9}{n}{{2/1,4/5,6/7,8/9}}{}{{1/0,3/0,5/0}}{{7/0,9/0}}{{0}}{}$. We consider $\tau=2$ and $\lambda=0$. We have: $\tilde{G}^{0}_{D,0,2}=\diagramme{10}{n}{{2/1,4/5,6/7,8/9}}{}{{1/0,3/0,5/0}}{{7/10,9/10}}{{0,10}}{10}$. 

\paragraph{Case 2: $\lambda=1$.} We define the map $\phi^{\lambda}_{0,\tau}$ by:
\[\phi^{\lambda}_{0,\tau}:\left\{
\begin{array}{rcl}
\Ch(D) &\longrightarrow &\{\{u,v\} \mbox{, } 0\leq u<v\leq n+1\} \\
\{u,v\} &\longrightarrow & \begin{cases} 
\{u+1,v+1\}& \mbox{ if } u\geq 1 \mbox{ or } (u=0  \mbox{ and } v\leq s-1), \\
\{u,v+1\}& \mbox{ if } u=0 \mbox{ and } v\geq s.
\end{cases}
\end{array}
\right.\] 
In this case the diagram $\tilde{G}^{\lambda}_{D,0,\tau}$ of degree $n+1$ is \emph{open} between the root and the vertex $1$ and  $\Ch(\tilde{G}^{\lambda}_{D,0,\tau})=\phi^{\lambda}_{0,\tau}(\Ch(D))$. 

\Ex{1} We use again the dissection diagram $D=\diagramme{9}{n}{{2/1,4/5,6/7,8/9}}{}{{1/0,3/0,5/0}}{{7/0,9/0}}{{0}}{}$ with $\tau=2$ but now $\lambda=1$. We obtain: $\tilde{G}^{1}_{D,0,2}=\diagramme{10}{n}{{3/2,5/6,7/8,9/10}}{}{{2/1,4/1,6/1}}{{8/0,10/0}}{{0,1}}{0}$. 

We define now the endomorphism homogeneous of degree 1 coding the insertion  of a chord in the root. To do that we consider a non negative integer (partition of the chords connected with the root) and $\lambda\in\{0,1\}$. We define $\phi^{\lambda}_{0,\tau}$ by:
\[\Phi^{\lambda}_{0,\tau}:\left\{
\begin{array}{rcl}
(\D)_{n}&\longrightarrow&(\D)_{n+1} \\
D&\longrightarrow& \begin{cases}
G^{\lambda}_{0,\tau}(D) & \mbox{ if } \tau\leq f_{D}(0),\\
0 &\mbox{ else} 
\end{cases}
\end{array}
\right.\]
where the set of the chords of $G^{\lambda}_{0,\tau}(D)$ is:
\[\Ch(G^{\lambda}_{0,\tau}(D))=\phi^{\lambda}_{0,\tau}(\Ch(D))\cup\{\{0,(1-\lambda)n+1\}\}.\]

\Rq{} Let $D$ be a dissection diagram, $\tau$ be an integer in $\llbracket0, f_{D}(0)\rrbracket$ and $\lambda$ be an element in $\{0,1\}$.
\begin{enumerate}
	\item The chord inserted to build $G^{\lambda}_{D,0,\tau}$ is naturally oriented. Indeed, either $\lambda=0$ and we build $\{0,n+1\}$ (chord labeled by $n+1$), either $\lambda=1$ and then we buid $\{0,1\}$ (chord labeled by $1$) in $G^{\lambda}_{D,0,\tau}$.
	\item The sum of all maps $\Phi^{\lambda}_{0,t}$ is called operation 2.
\end{enumerate}

\Ex{} We use the two previous examples. The inserted chord is colored in red.
\begin{enumerate}
	\item For $D=\diagramme{9}{n}{{2/1,4/5,6/7,8/9}}{}{{1/0,3/0,5/0}}{{7/0,9/0}}{{0}}{}$, we have $\Phi^{0}_{0,2}(D)=\diagramme{10}{n}{{2/1,4/5,6/7,8/9}}{{10/0}}{{1/0,3/0,5/0}}{{7/10,9/10}}{{0,10}}{}$.
	\item For $D=\diagramme{9}{n}{{2/1,4/5,6/7,8/9}}{}{{1/0,3/0,5/0}}{{7/0,9/0}}{{0}}{}$, we have $\Phi^{1}_{0,2}(D)=\diagramme{10}{n}{{3/2,5/6,7/8,9/10}}{{1/0}}{{2/1,4/1,5/1}}{{7/0,10/0}}{{0,1}}{}$.
\end{enumerate}

\subsubsection{Computation of $L(D)$ where $D$ is a dissection diagram of degree $n\geq 1$.}

\begin{prop}\label{insertionuna}
	Let $D$ be a dissection diagram of degree $n\geq1$, $i$ and $j$ two integers in $\llbracket 1,n\rrbracket$, $t_{i}$ (respectively $t_{j}$) be an integer in $\llbracket 0,f(i)\rrbracket$ (respectively in $\llbracket 0,f(j)\rrbracket$), $\tau_{1}$ and $\tau_{2}$ two integers in $\llbracket 0,f(0)\rrbracket$ and, $\lambda_{1}$ and $\lambda_{2}$ two elements in $\{0,1\}$. We have:
	\begin{enumerate}
		\item $(\Phi_{i,t_{1}}(D),\{i,i+1\})=(\Phi_{j,t_{2}}(D),\{j,j+1\})\iff (i=j \text{ and } t_{1}=t_{2})$,
		\item $(\Phi^{\lambda_{1}}_{0,\tau_{1}}(D),\{0,(1-\lambda_{1})n+1\})=(\Phi^{\lambda_{2}}_{0,\tau_{2}}(D),\{0,(1-\lambda_{2})n+1\})\iff (\lambda_{1}=\lambda_{2} \text{ and } \tau_{1}=\tau_{2})$,
		\item $(\Phi_{i,t_{1}}(D),\{i,i+1\})\neq (\Phi^{\lambda_{1}}_{0,\tau_{1}}(D),\{0,(1-\lambda_{1})n+1\})$.		
	\end{enumerate}
\end{prop}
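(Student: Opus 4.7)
The backward implications in (1) and (2) are immediate from the functionality of $\Phi_{i,t}$ and $\Phi^{\lambda}_{0,\tau}$, so the content of the proposition is the three forward (distinctness) statements. The plan is to show that from the data of the pair (resulting dissection diagram, distinguished chord) one can read back uniquely the insertion parameters. All three items reduce to a careful inspection of the definitions of $\phi_{i,t}$ and $\phi^{\lambda}_{0,\tau}$ given in Section~\ref{fonctionouverture}.

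For item (1), assume the equality $(\Phi_{i,t_{1}}(D),\{i,i+1\})=(\Phi_{j,t_{2}}(D),\{j,j+1\})$. First, I would extract from the second component the equality of (unordered) chords $\{i,i+1\}=\{j,j+1\}$. Since $i,j\geq 1$, these chords connect consecutive non-root vertices and the smaller endpoint uniquely identifies the slot; this forces $i=j$. With $i=j$ fixed, I would then show that $t_{1}=t_{2}$ by recovering the partition $A_{D,i}=A^{t,1}_{D,i}\sqcup A^{t,2}_{D,i}$ from the resulting diagram: by construction of $\phi_{i,t}$, exactly the chords of $A^{t,1}_{D,i}$ keep $i$ as an endpoint in $\Phi_{i,t}(D)$, while those of $A^{t,2}_{D,i}$ have their endpoint shifted to $i+1$. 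Since the counter-clockwise ordering of $A_{D,i}$ is fixed once and for all, the integer $t$ is exactly the number of chords incident to the vertex $i$ (and not to the inserted chord itself) in $\Phi_{i,t}(D)$, hence $t_{1}=t_{2}$.

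For item (2), the same strategy applies with two preliminary observations. The distinguished chord is $\{0,n+1\}$ when $\lambda=0$ and $\{0,1\}$ when $\lambda=1$; since $n\geq 1$ these are different chords, so the pair's second coordinate detects $\lambda$, forcing $\lambda_{1}=\lambda_{2}$. Having fixed $\lambda$, the integer $\tau$ is recovered from the partition of $A_{D,0}$ produced by $\phi^{\lambda}_{0,\tau}$: the chords in $A^{\tau,1}_{D,0}$ remain attached to the root, whereas those in $A^{\tau,2}_{D,0}$ are transferred to the newly created vertex ($n+1$ if $\lambda=0$, $1$ if $\lambda=1$). Reading off this partition in $\Phi^{\lambda}_{0,\tau}(D)$ yields $\tau_{1}=\tau_{2}$.

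Item (3) is the simplest: the chord $\{i,i+1\}$ inserted by operation~1 satisfies $i\geq 1$, so neither of its endpoints is the root, while the chord inserted by operation~2 has $0$ as an endpoint. Hence the two distinguished chords cannot coincide, and the two pairs differ in their second coordinate. I do not expect any real obstacle here; the only point requiring vigilance is the bookkeeping of the counter-clockwise labels $a^{i}_{D,1},\dots,a^{i}_{D,f(i)}$ to make sure that the parameter $t$ is uniquely reconstructible from the output, and more specifically to ensure that the chord labelled (in the new diagram) $\{i,i+1\}$ is not counted among the chords of $A_{D,i}$ when recovering the partition.
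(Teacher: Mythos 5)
Your proof is correct and follows essentially the same route as the paper's: the distinguished chord pins down $i$ (resp.\ $\lambda$), and the parameter $t$ (resp.\ $\tau$) is then recovered from the fertility of the split vertex in the output diagram, which is exactly the partition count you describe; item (3) is dispatched identically by observing that $i\geq 1$. (One immaterial slip: for $\lambda=0$ it is the chords of $A^{\tau,1}_{D,0}$ that are transferred to the new vertex $n+1$ while those of $A^{\tau,2}_{D,0}$ stay at the root, not the reverse; since $f_{D}(0)$ is determined by $D$, this does not affect the recoverability of $\tau$.)
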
 

\begin{proof}
Let $D$ be a dissection diagram of degree $n\geq1$, $i$ and $j$ two integers in $\llbracket 1,n\rrbracket$, $t_{i}$ (respectively $t_{j}$) be an integer in $\llbracket 0,f(i)\rrbracket$ (respectively in $\llbracket 0,f(j)\rrbracket$), $\tau_{1}$ and $\tau_{2}$ two integers in $\llbracket 0,f(0)\rrbracket$ and, $\lambda_{1}$ and $\lambda_{2}$ two elements in $\{0,1\}$. 
	\begin{enumerate}
		\item It is sufficient to prove the implication:
		\[(\Phi_{i,t_{1}}(D),\{i,i+1\})=(\Phi_{i,t_{2}}(D),\{i,i+1\})\Rightarrow t_{1}=t_{2}.\]
		We denote by $p_{1}$ (respectively by $p_{2}$) the vertex $i$ of the dissection diagram $\Phi_{i,t_{1}}(D)$ (respectively the dissection diagram $\Phi_{i,t_{2}}(D)$) and we consider its fertility $f(p_{1})$ (respectively $f(p_{2})$). We obtain $f(p_{1})=t_{1}+1$ and $f(p_{2})=t_{2}+1$. If we assume that the two couples $(\Phi_{i,t_{1}}(D),\{i,i+1\})$ and $(\Phi_{i,t_{2}}(D),\{i,i+1\})$ are equal then the implication becomes trivial.
		\item We put $\lambda=\lambda_{1}$. To prove the equivalence, it is sufficient to consider the implication: \[(\Phi^{\lambda}_{0,\tau_{1}}(D),\{0,(1-\lambda)n+1\})=(\Phi^{\lambda}_{0,\tau_{2}}(D),\{0,(1-\lambda)n+1\})\Rightarrow  \tau_{1}=\tau_{2}.\] We use the same process as before.
		\item As $i$ is positive, it is trivial.
	\end{enumerate}
\end{proof}

\begin{cor}
	Let $D$ be a dissection diagram of degree $n\geq1$. We call $\sigma_{D}$ the number of different couples $(G,a)$, where $G$ is a dissection diagram of degree $n+1$ and $a$ is a chord $a=\{u,v\}$ with $0\leq u < v\leq n+1$, obtained by operations 1 and 2 applied to $D$. We have:
	\[\sigma_{D}=3n+2+f_{D}(0).\]
\end{cor}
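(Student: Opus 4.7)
The plan is to observe that the previous proposition (which asserts injectivity of the couple-producing maps, together with the disjointness between operation 1 and operation 2) reduces the statement to a pure counting problem: I just need to sum, over all the legal parameter choices, the number of couples produced by each operation.

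First I would handle operation 1. For each vertex $i \in \llbracket 1,n\rrbracket$, the map $\Phi_{i,t}$ is defined for $t \in \llbracket 0, f_D(i)\rrbracket$, giving $f_D(i)+1$ admissible values of $t$. By part (1) of the previous proposition, distinct pairs $(i,t)$ yield distinct couples $(G,\{i,i+1\})$. Hence operation 1 contributes exactly
\[
\sum_{i=1}^{n}\bigl(f_D(i)+1\bigr) \;=\; n + \sum_{i=1}^{n}f_D(i)
\]
couples. Next I would handle operation 2 analogously: for each $\lambda \in \{0,1\}$ and each $\tau \in \llbracket 0,f_D(0)\rrbracket$, the map $\Phi^{\lambda}_{0,\tau}$ produces one couple, and by part (2) of the previous proposition these $2(f_D(0)+1)$ couples are pairwise distinct. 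Finally, part (3) of the previous proposition guarantees that the families produced by operation 1 and operation 2 do not overlap, so the two counts simply add.

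The last step is an elementary incidence count. Since $D$ has exactly $n$ chords and each chord has two endpoints, the sum of the fertilities over all vertices satisfies $\sum_{i=0}^{n}f_D(i) = 2n$, so $\sum_{i=1}^{n}f_D(i) = 2n - f_D(0)$. Plugging this in gives
\[
\sigma_D \;=\; \bigl(n + 2n - f_D(0)\bigr) + 2\bigl(f_D(0)+1\bigr) \;=\; 3n + 2 + f_D(0),
\]
as required. There is no real obstacle here; the only thing to be careful about is to cite the three parts of the previous proposition in the right places to justify both the intra-operation injectivity and the inter-operation disjointness, and to invoke the handshake identity for chords correctly.
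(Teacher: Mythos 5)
Your proof is correct and follows essentially the same route as the paper: the paper's own argument is exactly the sum $\sum_{i=1}^{n}(f_{D}(i)+1)$ for operation 1 plus $2(f_{D}(0)+1)$ for operation 2, simplified via $\sum_{i=1}^{n}f_{D}(i)=2n-f_{D}(0)$. You merely make explicit the appeal to the three parts of proposition \ref{insertionuna} and to the handshake identity, which the paper leaves as "direct computation".
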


\begin{proof}
	Let $D$ be a dissection diagram of degree $n\geq1$. By direct computation, we have:
	\begin{align*}\sigma(D)&=\overbrace{\sum\limits_{i=1}^{n}(f_{D}(i)+1)}^{\text{operation 1}}+\overbrace{2(f_{D}(0)+1)}^{\text{operation 2}}\\
		&=(3n-f_{D}(0))+2(f_{D}(0)+1)\\
		&=3n+2+f_{D}(0).\end{align*}
\end{proof}

\begin{prop}\label{valLundiagramme}
	Let $D$ be a dissection diagram of degree $n\in\N^{*}$. We have
	\[L(D)=\sum\limits_{\mathclap{\substack{i\in\llbracket1,n\rrbracket \\ t\in\llbracket0,f_{D}(i)\rrbracket}}}x^{k_{\{a_{i}\}}(\Phi_{i,t}(D))}\Phi_{i,t}(D)+\sum\limits_{\mathclap{\tau\in\llbracket 0,f_{D}(0)\rrbracket}}\left[\Phi^{0}_{0,\tau}(D)+\Phi^{1}_{0,\tau}(D)\right],\]
	with $a_{i}=\{i,i+1\}$.
	In other words,
	\[L(D)=\sum\limits_{\mathclap{\substack{i\in\llbracket1,n \rrbracket \\ t\in\llbracket0,l(i)-1 \rrbracket}}}\Phi_{i,t}(D)+x\sum\limits_{\mathclap{\substack{i\in\llbracket1,n \rrbracket \\ t\in\llbracket l(i),f_{D}(i) \rrbracket}}}\Phi_{i,t}(D)+\sum\limits_{\mathclap{\tau\in\llbracket 0,f_{D}(0)\rrbracket}}\left[\Phi^{0}_{0,\tau}(D)+\Phi^{1}_{0,\tau}(D)\right],\]
	where for all integer $i\in\llbracket1,n \rrbracket$, $l(i)$  is the unique integer in $\llbracket1,f_{D}(i)\rrbracket$ such that the chord $a_{D,l(i)}^{i}$ of $D$ is labeled by $i$.
\end{prop}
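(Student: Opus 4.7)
The plan is to compute $L(D)$ directly from its definition
\[L(D) = \sum_{\substack{G \in \D \\ G \text{ diagram}}} \bigl((Z_{D} \otimes Z_{\diagramme{1}{}{{1/0}}{}{}{}{}{}}) \circ \Delta(G)\bigr)\, G,\]
by expanding $\Delta(G) = \sum_{C \subset \Ch(G)} x^{k_{C}(G)}\, q_{C}(G) \otimes r_{C}(G)$ and isolating the terms that contribute. For the pairing with $Z_{\diagramme{1}{}{{1/0}}{}{}{}{}{}}$ in the second factor to be non-zero, we need $r_{C}(G) = \diagramme{1}{}{{1/0}}{}{}{}{}{}$, which forces $|C|=1$. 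Writing $C = \{a\}$, the remaining condition $q_{\{a\}}(G) = D$ forces $q_{\{a\}}(G)$ to be a single diagram rather than a disjoint union; a direct inspection of the contraction procedure shows that this happens precisely when $a = \{u, u+1\}$ connects two consecutive vertices of $\Pi_{n+1}$, since a chord between non-consecutive vertices separates $\Pi_{n+1}$ into two sub-polygons of positive degree.

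I would then match each such pair $(G, a)$ with exactly one instance of the insertion operations from Section \ref{fonctionouverture}. Given $(G, a)$, the two endpoints $u, u+1$ of $a$ are identified by $q_{\{a\}}$ into a single vertex of $D$, and the chords of $A_{G, u}$ and $A_{G, u+1}$ become the chords of $A_{D, i}$, where $i$ is the image vertex. If $i \neq 0$, the reverse procedure is exactly $\Phi_{i, t}$ for some $t \in \llbracket 0, f_{D}(i) \rrbracket$ that records how $A_{D, i}$ is partitioned between $u$ and $u+1$. If $i = 0$, the pair $(G, a)$ records in addition the direction (toward vertex $1$ or toward vertex $n+1$) in which the root is opened, corresponding to $\lambda \in \{0,1\}$, and it is of the form $(\Phi^{\lambda}_{0, \tau}(D), \{0, (1-\lambda)n+1\})$. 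Proposition \ref{insertionuna} guarantees that all listed insertions produce pairwise distinct pairs, so no overcounting occurs.

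The final step is to identify $k_{\{a\}}(G)$ in each case. For a root insertion $G = \Phi^{\lambda}_{0, \tau}(D)$, the kept chord $\{0, (1-\lambda)n+1\}$ already points from a positive-labeled vertex to the root in $r_{\{a\}}(G) = \diagramme{1}{}{{1/0}}{}{}{}{}{}$, matching the canonical orientation of the target, so $k_{\{a\}}(G) = 0$. For a non-root insertion $G = \Phi_{i, t}(D)$, the remark following the definition of $\Phi_{i, t}$ shows that the inserted chord $\{i, i+1\}$ is labeled by $i$ when $t \leq l(i) - 1$ and by $i+1$ when $t \geq l(i)$; comparing these two orientations to the orientation forced on the chord in $r_{\{a_{i}\}}(G)$ yields $k_{\{a_{i}\}}(G) = 0$ in the first subcase and $k_{\{a_{i}\}}(G) = 1$ in the second. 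Assembling the three sums gives the first displayed formula, and regrouping the non-root terms by $x$-power gives the second.

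The main obstacle lies in this last step: one must carefully unravel the definition of $r_{C}$ through the set $\Sp_{C}^{+}$ to pin down precisely when the natural label of the inserted chord agrees with the orientation of the target in $r_{\{a\}}(G) = \diagramme{1}{}{{1/0}}{}{}{}{}{}$. The combinatorial bijection itself between valid pairs $(G, a)$ and the parameter sets $(i, t)$ and $(\lambda, \tau)$ is essentially forced by the structure of $\Delta$ and the definitions of the insertion maps.
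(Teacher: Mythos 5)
Your proof is correct and follows essentially the same route as the paper's: both reduce $L(D)$ to the set of pairs $(G,a)$ with $q_{\{a\}}(G)=D$ and $r_{\{a\}}(G)$ the degree-one diagram, identify these bijectively with the outputs of operations 1 and 2 via a case split on whether the contracted chord meets the root (with Proposition \ref{insertionuna} ruling out overcounting), and read off the power of $x$ from the orientation of the inserted chord. Your treatment of the exponent $k_{\{a\}}(G)$ is in fact more explicit than the paper's, which leaves that step implicit.
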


\begin{proof}
	The fact that dissection diagrams obtained with operations 1 and 2 are elements of $L(D)$ is trivial.
	
	Let $(G,a)\in (\D)_{n+1}\times\Ch(G)$ such that $q_{\{a\}}=D$. We write $a$ as $a=\{i,j\}$ with $\{i,j\}\in\{\{u,v\}, 0\leq u<v\leq n+1\}$. \\
	\ul{Case 1 : $i\neq 0$.} $q_{\{a\}}(G)=D$ (only one diagram) so $j=i+1$. We have then 
	\[(G,a)=(\Phi_{i,t}(D),\{i,i+1\}) \mbox{ with } t=f_{G}(i)-1.\] 
	\ul{Case 2 : $i=0$.} $q_{\{a\}}(G)=D$ (only one diagram) so $j\in\{1,n+1\}$. We have then  
	\[(G,a)=\begin{cases}(\Phi^{1}_{0,\tau}(D),\{0,1\}) \mbox{ with } \tau=f_{G}(0)-1& \mbox{if } j=1,\\
	(\Phi^{0}_{0,\tau}(D),\{0,n+1\}) \mbox{ with } \tau=f_{G}(n+1)-1 & \mbox{if } j=n+1. \end{cases}\] 
\end{proof}

\begin{cor} We assume $x\in\N$. Let $D$ be a dissection diagram of degree $n\in\N^{*}$. We denote by $\sigma_{L(D)}$ the number of terms in $L(D)$ counted with multiplicity. We have:
	\[\sigma_{L(D)}=3nx + (1-x)\sum_{i=1}^{n}l(i) + (2-x)f_{D}(0) + 2\]
	where for all integer $i\in\llbracket1,n \rrbracket$, $l(i)$ is the unique integer in $\llbracket1,f_{D}(i)\rrbracket$ such that the chord $a_{l(i)}^{i}$ of $D$ is labeled by $i$.
\end{cor}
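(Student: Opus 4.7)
The plan is to count the terms in each of the three sums appearing in the explicit formula for $L(D)$ provided by Proposition \ref{valLundiagramme}, then combine them using a handshake-type identity on the fertilities of $D$.

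First, I would read off the weighted multiplicities directly from the second form of the formula in Proposition \ref{valLundiagramme}. For each $i \in \llbracket 1, n \rrbracket$, the sum $\sum_{t \in \llbracket 0, l(i)-1 \rrbracket} \Phi_{i,t}(D)$ contributes $l(i)$ terms of weight $1$, while $x \sum_{t \in \llbracket l(i), f_D(i) \rrbracket} \Phi_{i,t}(D)$ contributes $f_D(i) - l(i) + 1$ terms each carrying a factor $x$. The root contribution $\sum_{\tau \in \llbracket 0, f_D(0) \rrbracket} [\Phi^0_{0,\tau}(D) + \Phi^1_{0,\tau}(D)]$ contributes exactly $2(f_D(0)+1)$ terms of weight $1$. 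Adding these gives
\[
\sigma_{L(D)} = \sum_{i=1}^n l(i) \;+\; x \sum_{i=1}^n \bigl(f_D(i) - l(i) + 1\bigr) \;+\; 2\bigl(f_D(0) + 1\bigr).
\]

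The key numerical input needed to simplify the middle sum is the handshake identity for dissection diagrams: since $D$ has exactly $n$ chords, each of which has two endpoints among the $n+1$ vertices of $\Pi_n$, we have $\sum_{i=0}^n f_D(i) = 2n$, hence $\sum_{i=1}^n f_D(i) = 2n - f_D(0)$. Substituting this yields
\[
x \sum_{i=1}^n \bigl(f_D(i) - l(i) + 1\bigr) = x(2n - f_D(0)) + xn - x \sum_{i=1}^n l(i) = 3nx - x f_D(0) - x \sum_{i=1}^n l(i).
\]
Plugging back into the expression for $\sigma_{L(D)}$ and grouping the $\sum l(i)$ and $f_D(0)$ terms produces exactly $(1-x)\sum_{i=1}^n l(i) + 3nx + (2-x) f_D(0) + 2$, as claimed.

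I do not expect any serious obstacle here: the only nontrivial ingredient beyond Proposition \ref{valLundiagramme} is the handshake identity, which is immediate from the chord count. One point worth double-checking is that the terms counted really are distinct (so that the weighted count is well defined), but this is precisely the content of Proposition \ref{insertionuna}, which guarantees that the couples $(\Phi_{i,t}(D), \{i,i+1\})$ and $(\Phi^\lambda_{0,\tau}(D), \{0,(1-\lambda)n+1\})$ are pairwise distinct as $(i,t)$, $(\lambda, \tau)$ vary. Thus the proof reduces to the bookkeeping above.
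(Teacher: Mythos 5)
Your computation is correct and is exactly the ``direct computation'' the paper invokes: reading the multiplicities off the second form of Proposition \ref{valLundiagramme} and simplifying via $\sum_{i=1}^{n} f_{D}(i)=2n-f_{D}(0)$, the same handshake identity the paper already uses to compute $\sigma_{D}$. No discrepancy with the paper's (unwritten) argument.
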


\begin{proof}
	Direct computation.
\end{proof}

\begin{prop}\label{etapeun}
	Let $x$ be a scalar in $\K$. Let $D_{1}$ and $D_{2}$ be two nonempty dissection diagrams. We have the equivalence:
	\[L(D_{1})= L(D_{2})\iff D_{1}=D_{2}.\]
\end{prop}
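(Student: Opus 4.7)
The reverse implication is immediate. For the forward direction, assume $L(D_1) = L(D_2)$; since $L$ is homogeneous of degree one, $D_1$ and $D_2$ share a common degree $n$. By Proposition \ref{valLundiagramme}, one may write
\[
L(D) \;=\; \sum_{G} c(D, G)\, G,
\qquad
c(D, G) \;=\; \sum_{\substack{a \in \Ch(G) \\ q_{\{a\}}(G) = D}} x^{k_{\{a\}}(G)},
\]
where $G$ ranges over dissection diagrams of degree $n+1$, and the inner sum runs over the ``short'' chords $a$ of $G$, i.e.\ those connecting two cyclically adjacent vertices of $\Pi_{n+1}$, since only these yield a single dissection diagram upon contraction.

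The plan is to recover $D$ canonically from $L(D)$ by inspecting a distinguished term. I would consider $G^{\ast} := \Phi^{0}_{0, 0}(D_1)$: this is the element of $L(D_1)$ obtained from $D_1$ by adjoining a new vertex $n+1$ together with the single new chord $\{0, n+1\}$, leaving every other chord of $D_1$ unchanged. In $G^{\ast}$, vertex $n+1$ has fertility one, its unique incident chord is $\{0, n+1\}$, and contracting $\{0, n+1\}$ returns $D_1$. Since $L(D_1) = L(D_2)$, the diagram $G^{\ast}$ also lies in the support of $L(D_2)$, so by Proposition \ref{valLundiagramme} some short chord $b$ of $G^{\ast}$ satisfies $q_{\{b\}}(G^{\ast}) = D_2$. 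If $b = \{0, n+1\}$, then $D_2 = D_1$ directly; otherwise $b$ is an ``inherited'' short chord $\{u, u+1\}$ already present in $\Ch(D_1)$, and an explicit computation describes $q_{\{b\}}(G^{\ast})$ as a specific alternative degree-$n$ dissection diagram.

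To exclude the ``otherwise'' case, I would exploit the coefficient equalities $c(D_1, G) = c(D_2, G)$ at additional canonical terms of $L(D_1)$ --- in particular at the mirror element $G^{\ast\ast} := \Phi^{1}_{0,\, f_{D_1}(0)}(D_1)$, in which vertex $1$ plays the role symmetric to vertex $n+1$ in $G^{\ast}$, as well as at selected $\Phi_{i, t}(D_1)$. Combining the constraints obtained from these auxiliary equalities pins down $D_2$ chord by chord and forces $D_2 = D_1$.

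The main obstacle is that $G^{\ast}$ typically belongs to $L(D')$ for several $D' \neq D_1$ (one for each inherited short chord of $G^{\ast}$), so no single term of $L(D_1)$ determines $D_1$ by itself. The resolution requires using not just the support but the full coefficient profile of $L(D_1)$ across an explicitly chosen family of canonical terms, translating the coefficient equalities into local structural constraints on $D_2$ by means of the explicit formulas of Proposition \ref{valLundiagramme}.
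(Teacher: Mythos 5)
There is a genuine gap: the forward implication is never actually established. After locating $G^{\ast}=\Phi^{0}_{0,0}(D_{1})$ in the support of $L(D_{2})$ and disposing of the easy case $b=\{0,n+1\}$, you write that you \emph{would} exploit further coefficient equalities at $G^{\ast\ast}$ and at selected $\Phi_{i,t}(D_{1})$, and that combining the resulting constraints \enquote{pins down $D_{2}$ chord by chord}. But that reconstruction is precisely the content of the proposition, and no mechanism for carrying it out is given; you even name the obstruction yourself (the diagram $G^{\ast}$ lies in $L(D')$ for every $D'$ obtained by contracting an inherited short chord of $G^{\ast}$) without resolving it. What you have is a correct setup and a plan, not a proof.

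There is also a quieter defect in the one step you do take. The proposition is asserted for every scalar $x\in\K$, and the coefficient of $G^{\ast}$ in $L(D_{1})$ is a sum of the form $1+x^{k_{1}}+\dots$ over all insertions producing that same diagram; for particular values of $x$ (for instance $x=-1$) this sum can vanish, in which case $G^{\ast}$ does not lie in the support of $L(D_{1})=L(D_{2})$ and your opening move is unavailable. The paper avoids both difficulties by looking only at the terms of \emph{maximal} root fertility $f_{D}(0)+1$: operation 1 never changes the root fertility, so these terms arise exclusively from the two root insertions in which all of $D$ is hung on one side of the new root chord; they therefore carry coefficients independent of $x$ that cannot cancel, and they determine an unordered pair $\{S_{1},S_{2}\}$ of diagrams. (This is also why the paper first checks that $f_{D_{1}}(0)=f_{D_{2}}(0)$ is forced.) Comparing $\{S_{1},S_{2}\}$ with the corresponding pair $\{P_{1},P_{2}\}$ for $D_{2}$ and inducting on the root fertility in the one nontrivial case then closes the argument. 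To rescue your approach you would need to restrict to canonical terms whose coefficients are manifestly nonzero for all $x$ and then actually perform the chord-by-chord reconstruction you describe.
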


\ud{0.7}
\begin{proof}
	Let $x$ be a scalar in $\K$. Let $D_{1}$ and $D_{2}$ be two nonempty dissection diagrams.  We recall that $f_{D_{1}}(0)$ (respectively $f_{D_{2}}(0)$) is the root fertility of $D_{1}$ (respectively $D_{2}$). We know that, for all positive integer $n$, the projection of $L(D_{1})$ (respectively  $L(D_{2})$) on the linear space of dissection diagrams with the root fertility equal to $n$ is positive if $n\in\llbracket1,f_{D_{1}}(0)+1\rrbracket\setminus\{f_{D_{1}}(0)\}$ (respectively $n\in\llbracket1,f_{D_{2}}(0)+1\rrbracket\setminus\{f_{D_{2}}(0)\}$), and equals zero if $n\geq f_{D_{1}}(0)+2$ (respectively $n\geq f_{D_{2}}(0)+2$). As a conclusion, if $f_{D_{1}}(0)\neq f_{D_{2}}(0)$ the two elements $L(D_{1})$ and $L(D_{2})$ are different.
	We consider then two dissection diagrams $D_{1}$ and $D_{2}$ such that $f_{D_{1}}(0)=f_{D_{2}}(0)$. We assume $L(D_{1})$ and $L(D_{2})$ equal. We write $D_{1}$ and $D_{2}$ as $D_{1}=\defDiag{{6/0}}{{2/6/A_{1},10/6/B_{1},9/0/C_{1}}}$ and $D_{2}=\defDiag{{6/0}}{{2/6/A_{2},10/6/B_{2},9/0/C_{2}}}$. We define $S_{1}=\defDiag{{9/0}}{{4/0/D_{1}}}$, $S_{2}=\defDiag{{4/0}}{{9/0/D_{1}}}$, $P_{1}=\defDiag{{9/0}}{{4/0/D_{2}}}$, $P_{2}=\defDiag{{4/0}}{{9/0/D_{2}}}$. The sets $\{S_{1},S_{2}\}$ and $\{P_{1},P_{2}\}$ are equal. There are different cases.
	
	If $S_{1}=S_{2}$ or $S_{1}\neq S_{2}$ with $S_{1}=P_{1}$ the result is trivial.
	
	If $S_{1}\neq S_{2}$ and $S_{1}=P_{2}$ then the dissection diagrams $A_{1}$, $A_{2}$, $B_{1}$, $B_{2}$ are all empty. Furthermore, we have   $\defDiag{{9/0}}{{4/0/C_{1}}}=\defDiag{{4/0}}{{9/0/C_{2}}}$ and $\defDiag{{9/0}}{{4/0/C_{2}}}=\defDiag{{4/0}}{{9/0/C_{1}}}$. By induction on their root fertility, we obtain that $C_{1}=C_{2}$ and the proposition is proved. 
\end{proof}
\ud{0.7}

\begin{prop}
	In this proposition, we assume that $\R\subset\K$ and $x\in\R_{+}^{*}$. Let $D$ be a dissection diagram of degree $n$ greater than or equal to $2$. We define the pairing  \[\langle-,-\rangle:\left\{\begin{array}{rcl}\Hd\ot\Hd &\longrightarrow &\K \\ D_{1}\dots D_{k}\ot G_{1} \dots G_{l} &\longrightarrow & Z_{D_{1}\dots D_{k}}(G_{1} \dots G_{l}). \end{array}\right.\] For all dissection diagram $D$ of degree $n\geq2$, there exists a dissection diagram $G$ of degree $n-1$ such that $\langle D,L(G)\rangle\neq0$.
\end{prop}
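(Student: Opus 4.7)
The plan is to exhibit, given $D$ of degree $n \geq 2$, a chord $a$ of $D$ such that $G := q_{\{a\}}(D)$ is a single dissection diagram of degree $n-1$; then to read off $\langle D, L(G)\rangle$ as a strictly positive scalar using proposition \ref{valLundiagramme}. Recall from Subsection \ref{renumcoproduititere} that $q_{\{a\}}(D)$ is a single diagram precisely when $a$ joins two consecutive vertices of $\Pi_n$ (including the root--vertex-$n$ case).

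First, I would prove that every dissection diagram of degree $n \geq 1$ has at least one chord between consecutive polygon vertices. Viewing the sides of $\Pi_n$ together with the chords of $D$ as the edges of a planar graph on $V(\Pi_n)$, one has $n+1$ vertices and $2n+1$ edges, so by Euler's formula $n+1$ bounded faces. Counting edge--face incidences among bounded faces (each chord is shared by two bounded faces, each side by one) yields $2n+(n+1)=3n+1$. Writing $N_k$ for the number of bounded faces with $k$ edges, the identities $\sum_k N_k = n+1$ and $\sum_k k N_k = 3n+1$ combine into $N_2 - N_4 - 2N_5 - \cdots = 2$, hence $N_2 \geq 2$. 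A bounded face with only two edges necessarily consists of one side of $\Pi_n$ and one chord sharing the same endpoints, which are therefore consecutive vertices of $\Pi_n$.

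Next, I pick such a chord $a$ and set $G := q_{\{a\}}(D)$, a dissection diagram of degree $n-1 \geq 1$. By the insertion calculus of Subsection \ref{fonctionouverture}, the map $\Phi_{i,t}$ (respectively $\Phi^{\lambda}_{0,\tau}$) reverses the contraction of a chord between consecutive vertices, so there exist parameters for which $D$ equals $\Phi_{i,t}(G)$ or $\Phi^{\lambda}_{0,\tau}(G)$. By proposition \ref{valLundiagramme}, the scalar $\langle D, L(G)\rangle$ equals the sum, indexed by all chords $a'$ of $D$ satisfying $q_{\{a'\}}(D) = G$, of contributions $x^{k_{\{a'\}}(D)}$ with $k \in \{0,1\}$ (operation 1) or of $1$ (operation 2). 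Proposition \ref{insertionuna} guarantees that these contributions are indexed by distinct insertion data, so there is no cancellation. Under the hypothesis $x \in \R_+^*$ every such term is strictly positive, and at least the term corresponding to $a$ itself is present, whence $\langle D, L(G)\rangle > 0$.

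The main obstacle is the first step, namely isolating a combinatorial invariant of $D$ (here the count $N_2$ of digon faces) that guarantees an undoable chord; the remainder is essentially bookkeeping within the insertion calculus, using the positivity of $x$ to rule out cancellations.
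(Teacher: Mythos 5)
Your proposal is correct and follows essentially the same route as the paper: pick a chord of $D$ joining two consecutive vertices, contract it to obtain $G=q_{\{a\}}(D)$ of degree $n-1$, and use positivity of $x$ to see that $D$ occurs in $L(G)$ with a strictly positive coefficient. The only difference is that you supply a full Euler-formula face count to justify the existence of such a chord, a point the paper dispatches in one sentence by appealing to the non-crossing property, and you make explicit the no-cancellation step via propositions \ref{insertionuna} and \ref{valLundiagramme}.
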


\begin{proof}
	Let $x$ be positive scalar and $D$ a dissection diagram of degree $n$ greater than or equal to $2$. As there is not any intersection between its chords there exists a chord $a$ such that $a$ connects two consecutive vertices. We call $G$ the dissection diagram $G=q_{\{a\}}(D)$. It has a degree equal to $n-1$ and answers the question.
\end{proof}

\begin{prop}
	In this proposition, we assume that $\R\subset\K$ and $x\in\R_{+}^{*}$. Let $n$ be a non negative integer and $e_{n}\in\TR$ the ladder of degree $n$. By using the previous pairing we have: for all non negative integer $n$ and all dissection diagram $D$ of degree $n$ the scalar  $\langle D,\varphi(e_{n})\rangle$ is positive.
\end{prop}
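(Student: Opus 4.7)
The plan is to argue by induction on $n$, using the recursive definition $e_n = B(e_{n-1})$ and hence $\varphi(e_n) = L(\varphi(e_{n-1}))$, together with the positivity of all coefficients that appear in the formula for $L$ on a single dissection diagram.

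First I would handle the base cases. For $n=0$ (empty diagram) and $n=1$ (the unique degree-one dissection diagram $\diagramme{1}{}{{1/0}}{}{}{}{}{}$) the pairing evaluates to $1$. Then, as the inductive hypothesis, I assume that every coefficient of $\varphi(e_{n-1})$ in the basis of dissection diagrams of degree $n-1$ is strictly positive; equivalently, there exist scalars $c_G>0$, indexed by all dissection diagrams $G$ of degree $n-1$, such that $\varphi(e_{n-1})=\sum_G c_G\,G$.

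The inductive step then expands
\[
\varphi(e_n) \;=\; L(\varphi(e_{n-1})) \;=\; \sum_G c_G\, L(G).
\]
Using Proposition~\ref{valLundiagramme}, every $L(G)$ is a sum of dissection diagrams with coefficients of the form $x^{k}$ with $k\geq 0$; since $x\in\mathbb{R}_+^*$, all these coefficients are strictly positive. Consequently every coefficient appearing in $\varphi(e_n)$ is a non-negative combination of strictly positive numbers, so in order to conclude that $\langle D,\varphi(e_n)\rangle>0$ for a given dissection diagram $D$ of degree $n$, it suffices to exhibit one dissection diagram $G$ of degree $n-1$ such that $D$ appears in $L(G)$. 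This is exactly the content of the proposition stated immediately before: for $n\geq 2$ one selects a chord $a$ of $D$ joining two consecutive vertices and sets $G=q_{\{a\}}(D)$, which has degree $n-1$ and satisfies $\langle D,L(G)\rangle\neq 0$, hence strictly positive since all coefficients of $L(G)$ are powers of $x>0$.

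I do not anticipate a serious obstacle: the argument reduces to combining the inductive hypothesis, the explicit non-negativity visible in Proposition~\ref{valLundiagramme}, and the surjectivity-type statement from the preceding proposition. The one point that requires a little care is verifying that $\varphi(e_{n-1})$ is really supported, with strictly positive coefficients, on every dissection diagram of degree $n-1$ (not merely on some of them); this follows by applying the same $q_{\{a\}}$-argument inside the induction, which is what makes the induction self-sustaining.
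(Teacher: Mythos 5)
Your proposal is correct and follows essentially the same route as the paper: an induction on $n$ writing $\varphi(e_{n})=\sum_{G}a_{G}G$ with all $a_{G}>0$, expanding $\varphi(e_{n+1})=\sum_{G}a_{G}L(G)$, using that the coefficients in $L(G)$ are powers of $x>0$, and invoking the preceding proposition (contracting a chord between consecutive vertices) to guarantee every diagram of degree $n+1$ occurs in some $L(G)$. Your closing remark about the induction hypothesis needing full support on \emph{all} diagrams of degree $n$ is precisely how the paper phrases its recursive step, so there is nothing to add.
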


\ud{0.5}
\begin{proof} We give a recursive proof. The result is trivial for $e_{1}$. We assume the result true for a particular positive rank $n$. We write  $\varphi(e_{n})=\sum\limits_{\substack{G\in(\D)_{n} \\ \text{diagram}}}a_{G}G$ where each $a_{G}$ is positive. Then,
	\[\varphi(e_{n+1})=\varphi(e_{n})\circ \diagramme{1}{}{{1/0}}{}{}{}{}{}=\sum_{\substack{G\in(\D)_{n} \\ \text{diagram}}}a_{G}L(G)=\sum_{\substack{D\in(\D)_{n+1} \\ \text{diagram}}}b_{D}D\]
	and, according to the previous proposition, all $b_{D}$ are positive scalars.
	The result is true for the rank $n+1$ so the propostion is proved.
\end{proof}

\ud{0.7}

\subsection{Insertion of a chord in two dissection diagrams}
Let $D_{1}$ and $D_{2}$ be two dissection diagrams of positive degree $n_{1}$ and $n_{2}$. We choose a vertex $i$ of $D_{1}$, a vertex $j$ of $D_{2}$, integers $t\in\llbracket 0,f_{D_{1}}(i)\rrbracket$ and $\tau\in\llbracket 0,f_{D_{2}}(j)\rrbracket$. We want to insert a new chord by using the following steps. Thank to the opening maps defined in section \ref{fonctionouverture}, we obtain two \emph{open} diagrams. Then we change the vertices labels and we insert a new chord to have a dissection diagram of degree $n_{1}+n_{2}+1$. As a dissection diagram has only one root, we open at least one of the two dissection diagrams $D_{1}$ and $D_{2}$ in the root. We introduce an integer $\lambda\in\{0,1\}$, useful for the rest of the work.

\subsubsection{Open $D_{1}$ in the root and $D_{2}$ in another vertex.}
We consider the root of $D_{1}$, an integer $t\in\llbracket 0,f_{D_{1}}(0)\rrbracket$ (partition of the chords of $D_{1}$ connected with the root), an integer $\lambda\in\{0,1\}$ (location of the root of the \emph{open} diagram $\tilde{G}_{0,t}^{\lambda}$ obtained with $D_{1}$), a vertex of $D_{2}$ different from the root (\emph{i.e.} an integer $j\in\llbracket 1,n_{2}\rrbracket$) and an integer $\tau\in\llbracket 0,f_{D_{2}}(j)\rrbracket$ (partition of the chords of $D_{2}$ with the vertex $j$ in common). The root of the dissection diagram $G_{D_{1},D_{2},t,j,\tau}^{\lambda}$ built after the insertion is given by the root of $D_{2}$.

According to paragraph \ref{fonctionouverture}, thank to the map $\phi_{0,t}^{\lambda}$ (respectively $\phi_{j,\tau}$), we can consider the  \emph{open} diagram $\tilde{G}_{D_{1},0,t}^{\lambda}$ equipped with the set $\phi_{0,t}^{\lambda}(\Ch(D_{1}))$ (respectively  $\tilde{G}_{D_{2},j,\tau}$ equipped with the set $\phi_{j,\tau}(\Ch(D_{2}))$). 

To change the labels of the chords, as the root of $G_{D_{1},D_{2},t,j,\tau}^{\lambda}$ comes from the root of $D_{2}$, we consider the maps:
\[\gamma_{D_{1},t}^{\lambda}:\left\{
\begin{array}{rcl}
\phi_{t,\lambda}^{0}(\Ch(G_{1})) &\longrightarrow &\{\{u,v\} \mbox{, } 0\leq u<v\leq n_{1}+n_{2}+1\} \\
\{u,v\} &\longrightarrow & \begin{cases} 
\{u+j-\lambda,v+j-\lambda\}& \mbox{ if } u\geq 1, \\
\{j+\lambda(n_{1}+1),v+j-\lambda\}& \mbox{ if } u=0
\end{cases}
\end{array}
\right.\]
and
\[\gamma_{D_{2},j,\tau}:\left\{
\begin{array}{rcl}
\phi_{j,\tau}(\Ch(G_{2})) &\longrightarrow &\{\{u,v\} \mbox{, } 0\leq u<v\leq n_{1}+n_{2}+1\} \\
\{u,v\} &\longrightarrow & \begin{cases} 
\{u,v\}& \mbox{ if } u\leq j  \mbox{ and } v\leq j, \\
\{u,v+n_{1}\}& \mbox{ if } u\leq j \mbox{ and } v\geq j+1,\\
\{u+n_{1},v+n_{1}\}& \mbox{ if } u\geq j+1.
\end{cases}
\end{array}
\right.\] 
With these functions, we have a diagram of degree $n_{1}+n_{2}+1$ which is not a dissection diagram since there are just $n_{1}+n_{2}$ chords. We just have to build the chord $\{j,j+n_{1}+1\}$.

We define now a morphism from  $\D\otimes\D$ to $\D$ homogeneous of degree $1$. We consider two non negative integers $t$ and $\tau$, an element $\lambda$ in $\{0,1\}$, a positive integer $j$ and the following map: 
\[\Gamma_{t,j,\tau}^{\lambda}:\left\{
\begin{array}{rcl}
(\D)_{n_{1}}\ot(\D)_{n_{2}}  &\longrightarrow &(\D)_{n_{1}+n_{2}+1} \\
D_{1}\ot D_{2} &\longrightarrow & \begin{cases}
G_{D_{1},D_{2},t,j,\tau}^{\lambda} & \mbox{ if } t\leq f_{D_{1}}(0)\text{, } j\leq n_{2} \text{ and } \tau\leq f_{D_{2}}(j),\\
0 &\mbox{ else,}
\end{cases} 
\end{array}
\right.\] 
where the set $\Ch(G_{D_{1},D_{2},t,j,\tau}^{\lambda})$ is given by: \[\Ch(G_{D_{1},D_{2},t,j,\tau}^{\lambda})=\gamma_{D_{1},t}^{\lambda}(\phi_{0,t}^{\lambda}(\Ch(D_{1})))\cup\gamma_{D_{2},j,\tau}(\phi_{j,\tau}(\Ch(G_{2})))\cup\{\{j,j+n_{1}+1\}\}.\]

\Ex{1} We use the dissection diagrams $D_{1}=\diagramme{3}{n}{{1/0,2/1,3/0}}{}{}{}{{0}}{}$ and $D_{2}=\diagramme{3}{n}{{1/0,2/0,3/2}}{}{}{}{{2}}{}$ with $\lambda=1$, $t=1$, $j=2$ and $\tau=1$. We build first $\tilde{G}_{D_{1},0,1}^{1}$ and $\tilde{G}_{D_{2},2,1}$ by keeping the color code explained in section \ref{fonctionouverture}. Finally, we make the insertion. The old root of $D_{1}$ becomes white and the new chord is colored in red. We have:
\begin{align*}
	D_{1}=&\diagramme{3}{n}{{1/0,2/1,3/0}}{}{}{}{{0}}{} \xrightarrow[\lambda=1]{t=1} \tilde{G}_{D_{1},0,1}^{1}=\diagramme{4}{n}{{2/1,3/2,4/0}}{}{}{}{{0,1}}{0}, \\
	D_{2}=&\diagramme{3}{n}{{1/0,2/0,3/2}}{}{}{}{{2}}{} \xrightarrow[j=2]{\tau=1} \tilde{G}_{D_{2},2,1}=\diagramme{4}{n}{{1/0,2/0,4/3}}{}{}{}{{2,3}}{2}, \\
	\Gamma_{1,2,1}^{1}(D_{1}\ot D_{2})=&\colldiag{1}=\diagramme{7}{n}{{1/0,2/0,3/2,4/3,5/6,7/6}}{{6/2}}{}{}{{2,6}}{}.
\end{align*}

\Rq{} Let $D_{1}$ and $D_{2}$ be two dissection diagrams of respective  positive degree $n_{1}$ and $n_{2}$ and $\lambda\in\{0,1\}$, $t\in\llbracket 0,f_{D_{1}}(0)\rrbracket$, $j\in\llbracket 1,n_{2}\rrbracket$ and $\tau\in\llbracket 0,f_{D_{2}}(j)\rrbracket$ be integers.
\begin{enumerate}
	\item It is not necessary to consider the two possible values of $\lambda$. Indeed by direct computation we have $\Gamma_{t,j,\tau}^{\lambda}(D_{1}\ot D_{2})=\Gamma_{t,j,\tau}^{1-\lambda}(D_{1}\ot D_{2})$.
	\item We easily know the orientation of the inserted chord $\{j,j+n_{1}+1\}$. Indeed there exists an unique integer $l(j)$ such that the chord $a^{j}_{D_{2},l(j)}$ of $D_{2}$ is labeled by $j$. If $\tau\leq l(j)-1$ then the chord $\{j,j+n_{1}+1\}$ of $\Gamma_{t,j,\tau}^{\lambda}(D_{1}\ot D_{2})$ is labeled by $j$ else $\tau\geq l(j)$ and the chord $\{j,j+n_{1}+1\}$ of $\Gamma_{t,j,\tau}^{\lambda}(D_{1}\ot D_{2})$ is labeled by $j+n_{1}+1$.
	\item We call $\nu$ the map sending $D_{1}\ot D_{2}$ to $D_{2}\ot D_{1}$. The sum of maps of types $\Gamma_{t,j,\tau}^{\lambda}$ or $\Gamma_{t,j,\tau}^{\lambda}\circ\nu$ is called operation $3$.
\end{enumerate}

\subsubsection{Open $D_{1}$ and $D_{2}$ in the root.}
We decide now to work with the root of the two dissection diagrams. We consider an integer $t\in\llbracket0,f_{D_{1}}(0)\rrbracket$ (partition of the chords of $D_{1}$ connected with the root), an integer $\tau\in\llbracket0,f_{D_{2}}(0)\rrbracket$ (partition of the chords of $D_{2}$ connected with the root) and two integers  $\lambda_{1},\lambda_{2}\in\{0,1\}$ (location of the root of the two diagrams  $\tilde{G}_{D_{1},0,t}^{\lambda_{1}}$ and $\tilde{G}_{D_{2},0,\tau}^{\lambda_{2}}$, built with opening maps defined in section \ref{fonctionouverture}). The root of the dissection diagram $G_{D_{1},D_{2},t,\tau}^{\lambda_{1},\lambda_{2}}$, built after the insertion, is again given by the root of $D_{2}$. There are two cases.

\paragraph{Case 1: $\lambda=\lambda_{2}=0$ and $\lambda_{1}=1-\lambda=1$.}
According to paragraph \ref{fonctionouverture}, thank to the map $\phi_{0,t}^{1}$ (respectively $\phi_{0,\tau}^{0}$), we can consider the  \emph{open} diagram $\tilde{G}_{D_{1},0,t}^{1}$ equipped with the set $\phi_{0,t}^{1}(\Ch(D_{1}))$ (respectively  $\tilde{G}_{D_{2},0,\tau}^{0}$ with the set $\phi_{0,\tau}^{0}(\Ch(D_{2}))$).

To change labels, as the root of $G_{D_{1},D_{2},t,\tau}^{1,1}$ is given by the root of $D_{2}$, we use the following maps:
\[\gamma^{1}_{D_{1},t}:\left\{
\begin{array}{rcl}
\phi_{0,t}^{1}(\Ch(D_{1})) &\longrightarrow &\{\{u,v\} \mbox{, } 0\leq u<v\leq n_{1}+n_{2}+1\} \\
\{u,v\} &\longrightarrow & \begin{cases} 
\{u+n_{2},v+n_{2}\}& \mbox{ if } u\geq 1, \\
\{u,v+n_{2}\}& \mbox{ if } u=0
\end{cases}
\end{array}
\right.\] 
and
\[\gamma^{0}_{D_{2},\tau}:\left\{
\begin{array}{rcl}
\phi_{0,\tau}^{0}(\Ch(D_{2})) &\longrightarrow &\{\{u,v\} \mbox{, } 0\leq u<v\leq n_{1}+n_{2}+1\} \\
\{u,v\} &\longrightarrow & \{u,v\}. 
\end{array}
\right.\]  
The diagram of degree $n_{1}+n_{2}+1$ built with the previous map is not a dissection diagram since there are just $n_{1}+n_{2}$ chords. We just have to add the chord $\{0,n_{2}+1\}$. 

We define a morphism from $\D\otimes\D$ to $\D$ homogeneous of degree $1$. Let $t$ and $\tau$ be two non negative integers and we consider the map: 
\[\Gamma_{t,\tau}^{0}:\left\{
\begin{array}{rcl}
(\D)_{n_{1}}\ot(\D)_{n_{2}}  &\longrightarrow &(\D)_{n_{1}+n_{2}+1} \\
D_{1}\ot D_{2} &\longrightarrow &  \begin{cases}
G_{D_{1},D_{2},t,\tau}^{1,0} & \mbox{ si } t\leq f_{D_{1}}(0) \text{ and } \tau\leq f_{D_{2}}(0),\\
0 &\mbox{ sinon,}
\end{cases} 
\end{array}
\right.\] 
with \[\Ch(G_{D_{1},D_{2},t,\tau}^{1,0})=\gamma^{1}_{D_{1},t}(\phi_{0,t}^{1}(\Ch(D_{1})))\cup\gamma^{0}_{D_{2},\tau}(\phi_{0,\tau}^{0}(\Ch(D_{2})))\cup\{\{0,n_{2}+1\}\}.\]

\Ex{1} We use the dissection diagrams $D_{1}=\diagramme{3}{n}{{1/3,2/1,3/0}}{}{}{}{{0}}{}$ and $D_{2}=\diagramme{3}{n}{{1/0,2/1,3/0}}{}{}{}{{0}}{}$ with $t=1$ and $\tau=1$. We have:
\begin{align*}
	D_{1}=&\diagramme{3}{n}{{1/3,2/1,3/0}}{}{}{}{{0}}{}  \xrightarrow[\lambda_{1}=1]{t=1} \tilde{G}_{D_{1},0,t}^{1}=\diagramme{4}{n}{{2/4,3/2,4/0}}{}{}{}{{0,1}}{0}, \\
	D_{2}=&\diagramme{3}{n}{{1/0,2/1,3/0}}{}{}{}{{0}}{}  \xrightarrow[\lambda_{2}=0]{\tau=1} \tilde{G}_{D_{2},0,\tau}^{0}=\diagramme{4}{n}{{1/0,2/1,3/4}}{}{}{}{{0,4}}{4}, \\
	\Gamma_{t,\tau}^{0}(D_{1}\ot D_{2})=&\colldiag{3}=\diagramme{7}{n}{{1/0,2/1,3/4,5/7,6/5,7/0}}{{4/0}}{}{}{{0,4}}{}.
\end{align*}

\paragraph{Case 2: $\lambda=\lambda_{2}=1$ and $\lambda_{1}=1-\lambda=0$.}
We consider now the \emph{open} diagram $\tilde{G}_{D_{1},0,t}^{0}$ equipped with the set $\phi_{0,t}^{0}(\Ch(D_{1}))$  (respectively  $\tilde{G}_{D_{2},0,\tau}^{1}$ with the set $\phi_{0,\tau}^{1}(\Ch(D_{2}))$).

To change labels we use:
\[\gamma^{0}_{D_{1},t}:\left\{
\begin{array}{rcl}
\phi_{0,t}^{0}(\Ch(D_{1})) &\longrightarrow &\{\{u,v\} \mbox{, } 0\leq u<v\leq n_{1}+n_{2}+1\} \\
\{u,v\} &\longrightarrow & \{u,v\}
\end{array}
\right.\] 
and
\[\gamma^{1}_{D_{2},\tau}:\left\{
\begin{array}{rcl}
\phi_{0,\tau}^{1}(\Ch(D_{2})) &\longrightarrow &\{\{u,v\} \mbox{, } 0\leq u<v\leq n_{1}+n_{2}+1\} \\
\{u,v\} &\longrightarrow & \begin{cases} 
\{u+n_{1},v+n_{1}\}& \mbox{ if } u\geq 1, \\
\{u,v+n_{1}\}& \mbox{ if } u=0.
\end{cases}
\end{array}
\right.\] 
To have a dissection diagram of degree $n_{1}+n_{2}+1$ we just add the chord $\{0,n_{1}+1\}$.

Let $t$ and $\tau$ be two non negative integers and we define the morphism $\Gamma_{t,\tau}^{1}$ homogeneous of degree $1$ by:
\[\Gamma_{t,\tau}^{1}:\left\{
\begin{array}{rcl}
(\D)_{n_{1}}\ot(\D)_{n_{2}}  &\longrightarrow &(\D)_{n_{1}+n_{2}+1} \\
D_{1}\ot D_{2} &\longrightarrow &  \begin{cases}
G_{D_{1},D_{2},t,\tau}^{0,1} & \mbox{ if } t\leq f_{D_{1}}(0) \text{ and } \tau\leq f_{D_{2}}(0),\\
0 &\mbox{ else,}
\end{cases} 
\end{array}
\right.\] 
with \[\Ch(G_{D_{1},D_{2},t,\tau}^{0,1})=\gamma^{0}_{D_{1},t}(\phi_{0,t}^{0}(\Ch(D_{1})))\cup\gamma^{1}_{D_{2},\tau}(\phi_{0,\tau}^{1}(\Ch(D_{2})))\cup\{\{0,n_{1}+1\}\}.\]

\Ex{1} With $D_{1}=\diagramme{3}{n}{{1/3,2/1,3/0}}{}{}{}{{0}}{}$, $D_{2}=\diagramme{3}{n}{{1/0,2/1,3/0}}{}{}{}{{0}}{}$, $t=1$ and $\tau=1$ we have:
\begin{align*}
D_{1}=&\diagramme{3}{n}{{1/3,2/1,3/0}}{}{}{}{{0}}{}  \xrightarrow[\lambda_{1}=0]{t=1} \tilde{G}_{D_{1},0,t}^{0}=\diagramme{4}{n}{{1/3,2/1,3/4}}{}{}{}{{0,4}}{4}, \\
D_{2}=&\diagramme{3}{n}{{1/0,2/1,3/0}}{}{}{}{{0}}{}  \xrightarrow[\lambda_{2}=1]{\tau=1} \tilde{G}_{D_{2},0,\tau}^{1}=\diagramme{4}{n}{{2/1,3/2,4/0}}{}{}{}{{0,1}}{0}, \\
\Gamma_{t,\tau}^{1}(D_{1}\ot D_{2})=&\colldiag{2}=\diagramme{7}{n}{{1/3,2/1,3/4,5/4,6/5,7/0}}{{4/0}}{}{}{{0,4}}{}.
\end{align*}

\Rq{} Let $D_{1}$ and $D_{2}$ be two dissection diagrams of respective positive degree $n_{1}$ and $n_{2}$ and  $\lambda_{1},\lambda_{2}\in\{0,1\}$, $t\in\llbracket 0,f_{D_{1}}(0)\rrbracket$ and $\tau\in\llbracket 0,f_{D_{2}}(0)\rrbracket$ be integers.
\begin{enumerate}
	\item By direct computation we have: 
	\[\Gamma_{t,\tau}^{0}(D_{1}\ot D_{2})=\Gamma_{\tau,t}^{1}(D_{2}\ot D_{1}) \text{ and }
	\Gamma_{t,\tau}^{1}(D_{1}\ot D_{2})=\Gamma_{\tau,t}^{0}(D_{2}\ot D_{1}).\]
	\item The two cases $(\lambda_{1}=0,\lambda_{2}=0)$ and $(\lambda_{1}=1,\lambda_{2}=1)$ are useless. Indeed, it is sufficient to use the tow maps
	\[\tilde{\gamma}^{0}_{D_{1},t}:\left\{
	\begin{array}{rcl}
	\phi_{0,t}^{0}(\Ch(D_{1})) &\longrightarrow &\{\{u,v\} \mbox{, } 0\leq u<v\leq n_{1}+n_{2}+1\} \\
	\{u,v\} &\longrightarrow & \begin{cases}
	\{u+n_{2}+1,v+n_{2}+1\}& \mbox{si } v\leq n_{1}\\
	\{0,u+n_{2}+1\}& \mbox{si } v=n_{1}+1
	\end{cases}
	\end{array}
	\right.\]
	and
	\[\tilde{\gamma}^{1}_{D_{1},t}:\left\{
	\begin{array}{rcl}
	\phi_{0,t}^{1}(\Ch(D_{1})) &\longrightarrow &\{\{u,v\} \mbox{, } 0\leq u<v\leq n_{1}+n_{2}+1\} \\
	\{u,v\} &\longrightarrow & \begin{cases} 
	\{u-1,v-1\}& \mbox{si } u\geq 1, \\
	\{v-1,n_{1}+1\}& \mbox{si } u=0,
	\end{cases}
	\end{array}
	\right.\] 
	
	and the dissection diagrammes $G_{D_{1},D_{2},t,\tau}^{0,0}$ and $G_{D_{1},D_{2},t,\tau}^{1,1}$ of degree $n_{1}+n_{2}+1$ with
	\begin{align*}
	\Ch(G_{D_{1},D_{2},t,\tau}^{0,0})&=\tilde{\gamma}^{0}_{D_{1},t}(\phi_{0,t}^{0}(\Ch(D_{1})))\cup\gamma^{0}_{D_{2},\tau}(\phi_{0,\tau}^{0}(\Ch(D_{2})))\cup\{\{0,n_{2}+1\}\},\\ \Ch(G_{D_{1},D_{2},t,\tau}^{1,1})&=\tilde{\gamma}^{1}_{D_{1},t}(\phi_{0,t}^{1}(\Ch(D_{1})))\cup\gamma^{1}_{D_{2},\tau}(\phi_{0,\tau}^{1}(\Ch(D_{2})))\cup\{\{0,n_{1}+1\}\}.
	\end{align*}
	The we have: $G_{D_{1},D_{2},t,\tau}^{0,0}=G_{D_{1},D_{2},t,\tau}^{1,0}$ and $G_{D_{1},D_{2},t,\tau}^{1,1}=G_{D_{1},D_{2},t,\tau}^{0,1}$.
	\item The sum of maps of type $\Gamma_{t,\tau}^{\lambda}$ is called operation 4.
\end{enumerate}

\subsubsection{Computation of $L(D_{1}D_{2})$ where $(D_{1},D_{2})\in(\D)_{n_{1}}\times(\D)_{n_{2}}$ and $n_{1},n_{2}\geq1$.}

\begin{prop}\label{insertionunb}
	Let $D_{1}$ and $D_{2}$ two dissection diagrams of positive degree $n_{1}$ and $n_{2}$, $i$ in $\llbracket 1,n_{1}\rrbracket$, $\varrho$ in $\llbracket 0,f_{D_{1}}(i)\rrbracket$, $p$ in $\llbracket 0,f_{D_{2}}(0)\rrbracket$, $j$, $j_{1}$ and $j_{2}$ in $\llbracket 1,n_{2}\rrbracket$, $\tau$, respectively $\tau_{1}$, respectively $\tau_{2}$ in $\llbracket 0,f_{D_{2}}(j)\rrbracket$, respectivement $\llbracket 0,f_{D_{2}}(j_{1})\rrbracket$, respectively $\llbracket 0,f_{D_{2}}(j_{2})\rrbracket$, $\omega$, $\omega_{1}$ and $\omega_{2}$ in $\llbracket0,f_{D_{2}}(0)\rrbracket$, $t$, $t_{1}$ and $t_{2}$ in $\llbracket 0,f_{D_{1}}(0)\rrbracket$) in  $\lambda$, $\lambda_{1}$, $\lambda_{2}$ in $\{0,1\}$ be integers.
	
	We assume first that $D_{1}$ and $D_{2}$ are two different dissection diagrams. We have the following statements:
	\begin{enumerate}
		\item $(\Gamma_{t_{1},j_{1},\tau_{1}}^{1}(D_{1}\ot D_{2}),\{j_{1},j_{1}+n_{1}+1\})=(\Gamma_{t_{2},j_{2},\tau_{2}}^{1}(D_{1}\ot D_{2}),\{j_{2},j_{2}+n_{1}+1\})$ is equivalent to $(j_{1}=j_{2},~ t_{1}=t_{2} \text{ and } \tau_{1}=\tau_{2})$,
		\item $(\Gamma_{t_{1},\omega_{1}}^{\lambda_{1}}(D_{1}\ot D_{2}),\{0,\lambda_{1}n_{1}+(1-\lambda_{1})n_{2}+1\})=(\Gamma_{t_{2},\omega_{2}}^{\lambda_{2}}(D_{1}\ot D_{2}),\{0,\lambda_{2}n_{1}+(1-\lambda_{2})n_{2}+1\})$ is equivalent to $(t_{1}=t_{2},~ \omega_{1}=\omega_{2} \text{ and } \lambda_{1}=\lambda_{2})$,
		\item $(\Gamma_{t_{1},j_{1},\tau_{1}}^{1}(D_{1}\ot D_{2}),\{j,j+n_{1}+1\})\neq(\Gamma_{t_{2},\omega_{2}}^{\lambda}(D_{1}\ot D_{2}),\{0,\lambda_{2}n_{1}+(1-\lambda_{2})n_{2}+1\})$,
		\item $(\Gamma_{t,j,\tau}^{1}(D_{1}\ot D_{2}),\{j,j+n_{1}+1\})\neq(\Gamma_{p,i,\varrho}^{1}(D_{2}\ot D_{1}),\{i,i+n_{2}+1\})$.
	\end{enumerate}
	
	We assume that $D_{1}$ equals $D_{2}$ and we denote $D=D_{1}=D_{2}$. We have the following statements:
	\begin{enumerate}[resume]
		\item $(\Gamma_{t_{1},j_{1},\tau_{1}}^{1}(D\ot D),\{j_{1},j_{1}+n_{1}+1\})=(\Gamma_{t_{2},j_{2},\tau_{2}}^{1}(D\ot D),\{j_{2},j_{2}+n_{1}+1\})$ is equivalent to $(j_{1}=j_{2},~ t_{1}=t_{2} \text{ and } \tau_{1}=\tau_{2})$,
		\item $(\Gamma_{t,\omega}^{\lambda}(D\ot D),\{0,n_{1}+1\})=(\Gamma_{\omega,t}^{1-\lambda}(D\ot D),\{0,n_{1}+1\})$,
		\item $(\Gamma_{t_{1},j_{1},\tau_{1}}^{1}(D\ot D),\{j,j+n_{1}+1\})\neq(\Gamma_{t_{2},\omega_{2}}^{\lambda}(D\ot D),\{0,n_{1}+1\})$.
	\end{enumerate}	
\end{prop}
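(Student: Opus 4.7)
The plan is to invert each insertion operation by reading off the inserted chord $a$ together with the structure of the diagram $G$ of degree $n_1+n_2+1$ it sits in. The key observation is that contracting $a$ in $G$ yields the disjoint union $D_1 \sqcup D_2$, so the endpoints of $a$ and the way chords of $G$ are distributed at those endpoints encode the parameters $t$, $\tau$, $j$, $\omega$, $\lambda$ that were chosen. First I classify $a$ by its endpoints: for operation $3$ the chord is $\{j, j+n_1+1\}$ with $j \in \llbracket 1, n_2 \rrbracket$, so neither endpoint is the root; for operation $4$ the chord is $\{0, \lambda n_1 + (1-\lambda)n_2 + 1\}$, so one endpoint is the root. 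This immediate dichotomy proves claims $3$ and $7$ (no cross-type coincidence) as well as claim $4$, since the side of $a$ containing the copy of $D_1$ (namely the face whose vertex set is $\llbracket j+1, j+n_1 \rrbracket$) swaps roles under $D_1 \otimes D_2 \leftrightarrow D_2 \otimes D_1$, and the two faces have different content when $D_1 \neq D_2$.

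Next I handle the within-type injectivity. For claim $1$, equality of chords forces $j_1 = j_2 = j$; then the fertility of vertex $j$ in $G$ equals $f_{D_2}(j)+1$, and the counter-clockwise position of $a$ among the chords at $j$ recovers $\tau$ (it is exactly the number of chords of $D_2$ at $j$ lying on one specified side of $a$). Analogously, looking at the face of $G$ not containing the root and inspecting how chords at its ``opened'' vertex $S_{G,j+n_1+1}$ split recovers $t$. The converse direction is by definition. For claim $2$, knowing $a = \{0,k\}$ the value of $k \in \{n_1+1, n_2+1\}$ pins down $\lambda$ whenever $n_1 \neq n_2$; when $n_1 = n_2$ but $D_1 \neq D_2$ the two faces of $G \setminus\{a\}$ carry different diagrams, which still distinguishes $\lambda$. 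Then $t$ and $\omega$ are read from the chord configurations at the root of each face. Claim $5$ is the same argument as claim $1$ specialized to $D_1 = D_2$, and nothing changes because the chord endpoint $\{j, j+n_1+1\}$ already breaks any left/right symmetry when $j \geq 1$.

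The only substantive equality is claim $6$, asserting that $\Gamma_{t,\omega}^{\lambda}(D \otimes D) = \Gamma_{\omega,t}^{1-\lambda}(D \otimes D)$ with the same distinguished chord $\{0, n_1+1\}$. This is proved by writing out the two compositions of opening maps $\phi_{0,t}^{\lambda}$, $\phi_{0,\omega}^{1-\lambda}$ and the relabelings $\gamma_{D,t}^{\lambda}$, $\gamma_{D,\omega}^{1-\lambda}$ in parallel and checking that the two resulting labeled $(n_1+n_2+2)$-gons coincide vertex-by-vertex; this is essentially the first remark after the definition of $\Gamma_{t,\tau}^{\lambda}$, applied with $D_1 = D_2 = D$. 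The main obstacle is purely bookkeeping: all seven assertions reduce to verifying that the label maps $\phi$ and $\gamma$ are injective on their domains and that the face-decomposition of $G$ induced by $a$ recovers the ordered pair $(D_1, D_2)$ up to the expected symmetry, with claim $6$ being the unique place where a genuine symmetry exists.
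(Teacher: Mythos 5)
Your proposal is correct and follows essentially the same route as the paper: every parameter is recovered from the endpoints of the inserted chord $a$ and from the face decomposition of $G$ that $a$ induces (claims 3, 4 and 7 from the root/non-root dichotomy and from contracting $a$ inside one face, claims 1, 2 and 5 from the position of $a$ among the chords at its endpoints, and claim 6 from the symmetry remark $\Gamma^{\lambda}_{t,\tau}(D_{1}\ot D_{2})=\Gamma^{1-\lambda}_{\tau,t}(D_{2}\ot D_{1})$ specialized to $D_{1}=D_{2}$). One small slip: the fertility of the vertex $j$ in $G$ is $f_{D_{1}}(0)-t+\tau+1$ rather than $f_{D_{2}}(j)+1$, because the chords of $D_{2}$ at $j$ are split between the vertices $j$ and $j+n_{1}+1$ while $f_{D_{1}}(0)-t$ chords of $D_{1}$ land at $j$; this is inessential, since your actual recovery of $\tau$ and $t$ reads off the position of $a$ at each of its two endpoints, exactly as in the paper.
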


\begin{proof}
	We assume that $D_{1}$ and $D_{2}$ are two different dissection diagrams.
	\begin{enumerate}
		\item\label{Op3unicite} It is sufficient to prove:
		\[\Gamma_{t_{1},j,\tau_{1}}^{1}(D_{1}\ot D_{2})=\Gamma_{t_{2},j,\tau_{2}}^{1}(D_{1}\ot D_{2})\implies (t_{1}=t_{2} \text{ and } \tau_{1}=\tau_{2}).\]
		We assume $K_{1}=\Gamma_{t_{1},j,\tau_{1}}^{1}(D_{1}\ot D_{2})$ equals $K_{2}=\Gamma_{t_{2},j,\tau_{2}}^{1}(D_{1}\ot D_{2})$. We denote by $p_{1}(j)$ (respectively by $p_{2}(j)$) the vertex $j$ of the dissection diagram $K_{1}$ (respectively of the dissection diagram $K_{2}$)
		and we consider its fertility $f_{1}(j)$ (respectively $f_{2}(j)$). We have:
		\[f_{1}(j)=f_{D_{1}}(0)-t_{1}+\tau_{1}+1=f_{D_{1}}(0)-t_{2}+\tau_{2}+1=f_{2}(j).\]
		We now use the subset of chords $A_{K_{1},j}$ (respectively $A_{K_{2},j}$) and the chord $a=\{j,j+n_{1}+1\}$ too. We know that $a=a_{K_{1},\tau_{1}+1}^{j}=a_{K_{2},\tau_{2}+1}^{j}$ so we have $t_{1}=t_{2}$ and $\tau_{1}=\tau_{2}$.
		\item We assume that the two disssection diagrams $\Gamma_{t_{1},\omega_{1}}^{\lambda_{1}}(D_{1}\ot D_{2})$ and  $\Gamma_{t_{2},\omega_{2}}^{\lambda_{2}}(D_{1}\ot D_{2})$ are equal, and that the two chords $\{0,\lambda_{1}n_{1}+(1-\lambda_{1})n_{2}+1\}$ and $\{0,\lambda_{2}n_{1}+(1-\lambda_{2})n_{2}+1\}$ are equal too, but we assume $\lambda_{1}$ and $\lambda_{2}$ different. We obtain then $n_{1}=n_{2}$. Without any lack of generality we can use the case $\lambda_{1}=0$ and $\lambda_{2}=1$. We denote $K_{1}=\Gamma_{t_{1},\omega_{1}}^{\lambda_{1}}(D_{1}\ot D_{2})$ and $K_{2}=\Gamma_{t_{2},\omega_{2}}^{\lambda_{2}}(D_{1}\ot D_{2})$, we consider the subgraph $K_{1}$ (respectively the subgraph $K_{2}$) created with vertices from the set $\{1,\dots,n_{1}+1\}$ and we contract $\{0,n_{1}+1\}$. As $K_{1}$ and $K_{2}$ are equal, the obtained dissection diagrams are equal too. And yet, with $K_{1}$  (respectively $K_{2}$) we obtain $D_{2}$ (respectively $D_{1}$) so $D_{1}=D_{2}$. We can conclude that $\lambda_{1}$ and $\lambda_{2}$ are equal. Without any lack of generality, we assume $\lambda_{1}=\lambda_{2}=0$. Let $p_{1}$ (respectively $p_{2}$) be the root of $K_{1}$ (respectively $K_{2}$). We consider their fertility $f_{1}$ (respectively $f_{2}$). We have: 
		\[f_{1}=f_{D_{2}}(0)-\omega_{1}+t_{1}+1=f_{D_{2}}(0)-\omega_{2}+t_{2}+1=f_{2}.\]
		We now use the subset of chords  $A_{K_{1},0}$ (respectively $A_{K_{2},0}$) and the chord $a=\{0,n_{2}+1\}$ too. We have $a=a_{K_{1},t_{1}+1}^{0}=a_{K_{2},t_{2}+1}^{0}$ so $t_{1}=t_{2}$ and $\omega_{1}=\omega_{2}$.
		\item As $j$ is positive, this point is trivial.
		\item We assume there exists integers $j\in\llbracket1,n_{2}\rrbracket$ (respectively $i\in\llbracket1,n_{1}\rrbracket$), $t\in\llbracket0,f_{D_{1}}(0)\rrbracket$ and $\tau\in\llbracket0,f_{D_{2}}(j)\rrbracket$ (respectively $p\in\llbracket 0,f_{D_{2}}(0)\rrbracket$ and $\varrho\in\llbracket 0,f_{D_{1}}(i)\rrbracket$) such that 
		\[(\Gamma_{t,j,\tau}^{1}(D_{1}\ot D_{2}),\{j,j+n_{1}+1\})=(\Gamma_{p,i,\varrho}^{1}(D_{2}\ot D_{1}),\{i,i+n_{2}+1\}).\]
		We obtain $i=j$ and $n_{1}=n_{2}$. We denote $K_{1}=\Gamma_{t,j,\tau}^{1}(D_{1}\ot D_{2})$ and $K_{2}=\Gamma_{p,i,\varrho}^{1}(D_{2}\ot D_{1})$, we consider the subgraph of $K_{1}$ (respectively $K_{2}$) formed by vertices in $\{1,\dots,j\}\cup\{j+n_{1}+1,\dots,j+2n_{1}+1\}$ and we contract $\{j,j+n_{1}+1\}$. We obtain then $D_{1}=D_{2}$.
	\end{enumerate}
	
	We assume now $D=D_{1}=D_{2}$.
	\begin{enumerate}[resume]
		\item It is sufficient to consider the proof of \ref{Op3unicite}.
		\item It is true by definition.
		\item It is trivial because $j$ is positive.
	\end{enumerate}
\end{proof}

\begin{cor}
	Let $D_{1}$ and $D_{2}$ be two dissection diagrams of positive degree $n_{1}$ and $n_{2}$. We define $\sigma_{D}$ as the numbers of different couples $(G,a)$ where $G$ is a dissection diagram of degree $n_{1}+n_{2}+1$ and $a$ a chord $a=\{u,v\}$ with $0\leq u < v\leq n_{1}+n_{2}+1$ obtained by applying operations 3 and 4 on $D_{1}$ and $D_{2}$. We have:
	\begin{align*}
		\sigma(D_{1},D_{2})&=\begin{cases} 
			(f_{D_{2}}(0)+1)\sum\limits_{i=0}^{n_{1}}(f_{D_{1}}(i)+1)+(f_{D_{1}}(0)+1)\sum\limits_{j=0}^{n_{2}}(f_{D_{2}}(j)+1)& \mbox{if } D_{1}\neq D_{2}, \\
			(f_{D_{2}}(0)+1)\sum\limits_{i=0}^{n_{1}}(f_{D_{1}}(i)+1)& \mbox{if } D_{1}=D_{2},
		\end{cases} \\
		&\mbox{ }=\begin{cases} 
			(f_{D_{2}}(0)+1)(3n_{1}+1)+(f_{D_{1}}(0)+1)(3n_{2}+1)& \mbox{if } D_{1}\neq D_{2}, \\
			(f_{D_{1}}(0)+1)(3n_{1}+1)& \mbox{if } D_{1}=D_{2}.
		\end{cases}
	\end{align*}
\end{cor}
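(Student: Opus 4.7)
The plan is to use Proposition \ref{insertionunb} to count the distinct couples $(G,a)$ produced by operations $3$ and $4$, handling the cases $D_1 \neq D_2$ and $D_1 = D_2$ separately, and then to simplify via the handshake identity for fertilities.

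First I would enumerate the contribution of operation $3$. The identity $\Gamma^{\lambda}_{t,j,\tau}(D_1 \otimes D_2) = \Gamma^{1-\lambda}_{t,j,\tau}(D_1 \otimes D_2)$ allows me to fix $\lambda = 1$. Operation $3$ then splits into two families: insertions with $D_1$ opened at its root and $D_2$ at a vertex $j \in \llbracket 1, n_2 \rrbracket$, contributing $(f_{D_1}(0)+1) \sum_{j=1}^{n_2}(f_{D_2}(j)+1)$ parameter choices, and the insertions via $\nu$ with the roles swapped, contributing $(f_{D_2}(0)+1) \sum_{i=1}^{n_1}(f_{D_1}(i)+1)$. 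Statements $(1)$ and $(5)$ of Proposition \ref{insertionunb} guarantee that distinct parameters yield distinct couples within each family, and statement $(4)$ asserts that the two families are disjoint when $D_1 \neq D_2$. When $D_1 = D_2 = D$, the map $\nu$ fixes $D \otimes D$, so the two families coincide and must be counted only once.

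Next I would count operation $4$. For $D_1 \neq D_2$, statement $(2)$ certifies that the $2(f_{D_1}(0)+1)(f_{D_2}(0)+1)$ triples $(t,\omega,\lambda)$ give distinct couples, and the remark $\Gamma^{0}_{t,\tau}(D_1 \otimes D_2) = \Gamma^{1}_{\tau,t}(D_2 \otimes D_1)$ ensures that applying operation $4$ to $(D_2,D_1)$ produces no new couples. When $D_1 = D_2 = D$, statement $(6)$ gives the pairing $(t,\omega,\lambda) \sim (\omega,t,1-\lambda)$ and collapses the count to $(f_D(0)+1)^2$. Statements $(3)$ and $(7)$ finally ensure that operations $3$ and $4$ never share a couple.

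Summing the two contributions gives, for $D_1 \neq D_2$,
\[
\sigma(D_1,D_2) = 2(f_{D_1}(0)+1)(f_{D_2}(0)+1) + (f_{D_1}(0)+1)\sum_{j=1}^{n_2}(f_{D_2}(j)+1) + (f_{D_2}(0)+1)\sum_{i=1}^{n_1}(f_{D_1}(i)+1),
\]
which is exactly the first announced formula once the operation-$4$ summand is absorbed into the $i=0$ and $j=0$ terms of the two remaining sums. For $D_1 = D_2 = D$ the same regrouping yields $(f_D(0)+1)\sum_{i=0}^n(f_D(i)+1)$. The second form then follows from the handshake identity $\sum_{i=0}^n f_D(i) = 2n$, valid since each chord contributes $2$ to the total fertility, which turns $\sum_{i=0}^n(f_D(i)+1)$ into $3n+1$. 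The calculation is otherwise routine; the only delicate point is the bookkeeping of the symmetry imposed by statement $(6)$ and by the equalities of the remark on $\Gamma^{\lambda}_{t,\tau}$ in the symmetric case $D_1 = D_2$, which is what prevents the naive count $2(f_D(0)+1)^2$ from being correct there.
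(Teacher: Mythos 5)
Your proof is correct and is essentially the detailed version of what the paper leaves as ``a direct computation'': you enumerate the parameter triples for operations 3 and 4, use Proposition \ref{insertionunb} to certify that distinct parameters give distinct couples (with the two identifications coming from $\nu$-symmetry and from statement (6) in the case $D_1=D_2$), absorb the operation-4 contribution into the $i=0$ and $j=0$ terms, and finish with the handshake identity $\sum_{i=0}^{n}f_D(i)=2n$. The only point you implicitly rely on that the proposition does not state verbatim is the injectivity of $(t,\omega)\mapsto\Gamma^{\lambda}_{t,\omega}(D\otimes D)$ for fixed $\lambda$ when $D_1=D_2$, but this follows from the same fertility argument used for statement (2), so the count $(f_D(0)+1)^2$ stands.
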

\begin{proof}
	It is a direct computation.
\end{proof}

\begin{prop}\label{valLdeuxdiagrammes}
	Let $D_{1}$ and $D_{2}$ be two dissection diagrams of positive degree $n_{1}$ and $n_{2}$.	 
	\begin{align*}
		L(D_{1}D_{2})=&\sum\limits_{\mathclap{\substack{t\in\llbracket0,f_{D_{1}}(0)\rrbracket \\ j\in\llbracket1,n_{2}\rrbracket \\ \tau\in\llbracket0,f_{D_{2}}(j)\rrbracket}}}x^{k_{\{a_{j,n_{1}}\}}(\Gamma_{t,j,\tau}^{1}(D_{1}\ot D_{2}))}\Gamma_{t,j,\tau}^{1}(D_{1}\ot D_{2})\\
		+&\sum\limits_{\mathclap{\substack{\tau\in\llbracket0,f_{D_{2}}(0)\rrbracket \\ i\in\llbracket1,n_{1}\rrbracket \\ t\in\llbracket0,f_{D_{1}}(i)\rrbracket}}}x^{k_{\{a_{i,n_{2}}\}}(\Gamma_{\tau,i,t}^{1}(D_{2}\ot D_{1}))}\Gamma_{\tau,i,t}^{1}(D_{2}\ot D_{1})
		+\sum\limits_{\mathclap{\substack{\lambda\in\{0,1\} \\ t\in\llbracket0,f_{D_{1}}(0)\rrbracket \\ \tau\in\llbracket0,f_{D_{2}}(0)\rrbracket}}}\Gamma_{t,\tau}^{\lambda}(D_{1}\ot D_{2})
	\end{align*}
	with
	\[a_{j,n_{1}}=\{j,j+n_{1}+1\} \text{ and } a_{i,n_{2}}=\{i,i+n_{2}+1\}.\]

	In other words, 
	\begin{align*}
		L(D_{1}D_{2})=&\sum\limits_{\mathclap{\substack{t\in\llbracket0,f_{D_{1}}(0)\rrbracket \\ j\in\llbracket1,n_{2}\rrbracket \\ \tau\in\llbracket0,l_{2}(j)-1\rrbracket}}}\Gamma_{t,j,\tau}^{1}(D_{1}\ot D_{2})+x\sum\limits_{\mathclap{\substack{t\in\llbracket0,f_{D_{1}}(0)\rrbracket \\ j\in\llbracket1,n_{2}\rrbracket \\ \tau\in\llbracket l_{2}(j),f_{D_{2}}(j)\rrbracket}}}\Gamma_{t,j,\tau}^{1}(D_{1}\ot D_{2})\\
		+&\sum\limits_{\mathclap{\substack{\tau\in\rrbracket0,f_{D_{2}}(0)\rrbracket \\ i\in\llbracket1,n_{1}\rrbracket \\ t\in\llbracket0,l_{1}(i)-1\rrbracket}}}\Gamma_{\tau,i,t}^{1}(D_{2}\ot D_{1})+x\sum\limits_{\mathclap{\substack{\tau\in\llbracket0,f_{D_{2}}(0)\rrbracket \\ i\in\llbracket1,n_{1}\rrbracket \\ t\in\llbracket l_{1}(i),f_{D_{1}}(i)\rrbracket}}}\Gamma_{\tau,i,t}^{1}(D_{2}\ot D_{1})+\sum\limits_{\mathclap{\substack{\lambda\in\{0,1\} \\ t\in\llbracket0,f_{D_{1}}(0)\rrbracket \\ \tau\in\llbracket0,f_{D_{2}}(0)\rrbracket}}}\Gamma_{t,\tau}^{\lambda}(D_{1}\ot D_{2}) 
	\end{align*}
	where for all couples $(i,j)\in\llbracket1,n_{1} \rrbracket\times\llbracket1,n_{2} \rrbracket$, $(l_{1}(i),l_{2}(j))$ is the unique couple of integers in  $\llbracket0,f_{D_{1}}(i)\rrbracket\times\llbracket0,f_{D_{2}}(j)\rrbracket$ such that $a_{D_{1},l_{1}(i)}^{i}$ in $\Ch(D_{1})$ is labeled by $i$ and $a_{D_{2},l_{2}(j)}^{j}$ in $\Ch(D_{2})$ is labeled by $j$.
\end{prop}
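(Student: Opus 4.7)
The plan is to compute $L(D_1 D_2)$ by unfolding the Oudom--Guin product and matching each resulting term bijectively with the insertion data appearing in the sums. Using the identities $(xa)\circ b = x \circ (a\circ b) - (x\circ a)\circ b$ and coassociativity exactly as in the proof of Corollary~\ref{corprovenotfree}, the coefficient of a dissection diagram $G$ of degree $n_1+n_2+1$ in $L(D_1 D_2)$ equals the coefficient of $D_1 D_2 \otimes \diagramme{1}{}{{1/0}}{}{}{}{}{}$ in $\De(G)$. By the very definition of $\De$, this is $\sum_{a} x^{k_{\{a\}}(G)}$, the sum ranging over chords $a \in \Ch(G)$ satisfying $q_{\{a\}}(G) = D_1 D_2$; the condition $r_{\{a\}}(G) = \diagramme{1}{}{{1/0}}{}{}{}{}{}$ is automatic for any single chord, only the orientation of $a$ being variable and responsible for the exponent $k_{\{a\}}(G) \in \{0, 1\}$.

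I would then classify each such pair $(G, a)$ according to whether $a$ is incident to the root $0$ of $G$. If $a = \{u, v\}$ with $1 \leq u < v$, the contraction of $a$ disconnects $G$ into the piece containing the root $0$ and a second piece whose root is the merged vertex. Up to swapping the roles of $D_1$ and $D_2$, such a pair is recovered by opening the root of one diagram (via the maps of Section~\ref{fonctionouverture}) and a non-root vertex $j$ of the other, then inserting the chord $\{j, j+n_1+1\}$: this is exactly $\Gamma^1_{t,j,\tau}(D_1 \otimes D_2)$ when $D_2$ keeps the root, and $\Gamma^1_{\tau,i,t}(D_2 \otimes D_1)$ when $D_1$ keeps it. If on the other hand $a = \{0, k\}$, both pieces carry the root position after contraction, and the inverse reconstruction is precisely $\Gamma^{\lambda}_{t,\tau}(D_1 \otimes D_2)$ with $\lambda \in \{0, 1\}$ specifying the cyclic side of the root where each piece is placed.

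Proposition~\ref{insertionunb} then ensures that the indexing tuples $(t, j, \tau)$, $(\tau, i, t)$ and $(t, \tau, \lambda)$ all produce pairwise distinct pairs $(G, a)$. Conversely, an explicit inversion, reading off the parameters from the cyclic rank of $a$ among the chords incident to its endpoints and from the root fertility of $G$, shows that every admissible $(G, a)$ arises in this way. Together, this bijection proves the first displayed formula.

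The rewritten form featuring explicit powers of $x$ follows from the orientation analysis made in the remarks accompanying the definitions of the $\Gamma$ operations. For $\Gamma^1_{t,j,\tau}(D_1 \otimes D_2)$, the new chord $\{j, j+n_1+1\}$ is labelled by $j$ and requires no direction change (contributing $x^0 = 1$) when $\tau \leq l_2(j)-1$, and is labelled by $j+n_1+1$ and costs one direction change (contributing $x$) when $\tau \geq l_2(j)$; the analogous statement for $\Gamma^1_{\tau,i,t}(D_2 \otimes D_1)$ uses $l_1(i)$. The operations $\Gamma^\lambda_{t,\tau}$, whose new chord is incident to the root, always insert it in its natural orientation, contributing $x^0 = 1$. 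The main obstacle will be the surjectivity part of the case analysis, notably in the case $D_1 = D_2$ where the identifications $\Gamma^0_{t,\tau}(D\otimes D) = \Gamma^1_{\tau,t}(D\otimes D)$ supplied by Proposition~\ref{insertionunb} must be invoked so as to guarantee that no pair is counted twice.
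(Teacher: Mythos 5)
Your proposal follows essentially the same route as the paper's proof: both reduce $L(D_{1}D_{2})$ to a weighted count of pairs $(G,a)$ with $q_{\{a\}}(G)=D_{1}D_{2}$, split according to whether $a$ meets the root, invert explicitly onto the operations $\Gamma^{1}_{t,j,\tau}$, $\Gamma^{1}_{\tau,i,t}$ and $\Gamma^{\lambda}_{t,\tau}$, and read off the power of $x$ from the orientation of the inserted chord, with Proposition~\ref{insertionunb} guaranteeing that distinct parameters give distinct pairs. One small correction of bookkeeping: when $D_{1}=D_{2}=D$ the right-hand side is \emph{supposed} to count each pair $(G,a)$ twice (the first two sums coincide, and the identity $\Gamma^{\lambda}_{t,\omega}(D\otimes D)=\Gamma^{1-\lambda}_{\omega,t}(D\otimes D)$ doubles the $\lambda$-sum), matching the factor $2$ in the expression of $L(DD)$ coming from the Oudom--Guin expansion, so the identification serves to produce the doubling rather than to prevent it.
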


\begin{proof} We recall that, if $Id$ is the identity morphism and $l$ is the projection on $\D^{+}$, we have:
	\ud{0.5}
%	\begin{align*}
%		L(D_{1}D_{2})=&\sum\limits_{\substack{G\in\D \\ G \text{ diagram}}}\bigg(Z_{D_{1}D_{2}\circ \diagramme{1}{}{{1/0}}{}{}{}{}{}}\bigg)(G)G\\
%		=&\sum\limits_{\substack{G\in\D \\ G \text{ diagram}}}\bigg(Z_{D_{1}\circ (D_{2}\circ \diagramme{1}{}{{1/0}}{}{}{}{}{})}-Z_{(D_{1}\circ D_{2})\circ \diagramme{1}{}{{1/0}}{}{}{}{}{}}\bigg)(G)G\\
%		=&\sum\limits_{\substack{G\in\D \\ G \text{ diagram}}}\bigg((Z_{D_{1}}\ot Z_{D_{2}}\ot Z_{\diagramme{1}{}{{1/0}}{}{}{}{}{}}) \circ(Id\ot \De)\circ\De\bigg)(G)G\\
%		-&\sum\limits_{\substack{G\in\D \\ G \text{ diagram}}}\bigg((Z_{D_{1}}\ot Z_{D_{2}}\ot Z_{\diagramme{1}{}{{1/0}}{}{}{}{}{}}) \circ(\De\ot Id)\circ(l\ot Id)\circ\De\bigg)(G)G\\
%		=&\begin{cases}
%			\sum\limits_{\substack{G\in\D \\ G \text{ diagram}}}\bigg(Z_{D_{1}D_{2}}\ot Z_{\diagramme{1}{}{{1/0}}{}{}{}{}{}}\bigg)\bigg(\De(G)\bigg)G& \mbox{if } D_{1}\neq D_{2},\\ 
%			2\sum\limits_{\substack{G\in\D \\ G \text{ diagram}}}\bigg(Z_{D_{1}D_{2}}\ot Z_{\diagramme{1}{}{{1/0}}{}{}{}{}{}}\bigg)\bigg(\De(G)\bigg)G& \mbox{else.}
%		\end{cases}
%	\end{align*}
	\begin{align*}
			L(D_{1}D_{2})=&\begin{cases}
				\sum\limits_{\substack{G\in\D \\ G \text{ diagram}}}\bigg(Z_{D_{1}D_{2}}\ot Z_{\diagramme{1}{}{{1/0}}{}{}{}{}{}}\bigg)\bigg(\De(G)\bigg)G& \mbox{if } D_{1}\neq D_{2},\\ 
				2\sum\limits_{\substack{G\in\D \\ G \text{ diagram}}}\bigg(Z_{D_{1}D_{2}}\ot Z_{\diagramme{1}{}{{1/0}}{}{}{}{}{}}\bigg)\bigg(\De(G)\bigg)G& \mbox{else.}
			\end{cases}
	\end{align*}	
	\ud{0.7}
	
	Let $(G,a)\in (\D)_{n_{1}+n_{2}+1}\times\Ch(G)$ be a couple diagram-chord such that $q_{\{a\}}(G)=D_{1}D_{2}$. We write $a$ as $a=\{i,j\}$ with $0\leq i<j\leq n_{1}+n_{2}+1$. There exists $\nu\in\llbracket1,f_{G}(i)\rrbracket$ such that $a^{i}_{\nu}=\{i,j\}$. $q_{a}(G)=D_{1}D_{2}$ so $j=i+n+1$ with $n\in\{n_{1},n_{2}\}$.\\
	\ul{Case 1 : $i\neq 0$.} We consider the subgraph $S$ of $G$ with vertices \[\{0,\dots,i\}\cup\{j,\dots,n_{1}+n_{2}+1\}.\]
	We have:
	\begin{align*}
		(&G,a)=\\
		&\begin{cases}(\Gamma_{t,i,\tau}^{1}(D_{1}\ot D_{2}),\{i,i+n_{1}+1\}) \mbox{ with } t=f_{D_{1}}(0)-f_{G}(i)+\nu \mbox{ and } \tau=\nu-1  & \mbox{if } q_{\{a\}}(S)=D_{2},\\
			(\Gamma_{\tau,i,t,}^{1}(D_{2}\ot D_{1}),\{i,i+n_{2}+1\}) \mbox{ with } t=\nu-1 \mbox{ and } \tau=f_{D_{2}}(0)-f_{G}(i)+\nu& \mbox{if } q_{\{a\}}(S)=D_{1}. \end{cases}
	\end{align*}
	\ul{Case 2 : $i=0$.} We use the subgraph $S$ of $G$ with vertices $\{0,\dots,j\}$. We have:
	\begin{align*}
		(&G,a)=\\
		&\begin{cases}
			(\Gamma_{t,\tau}^{1}(D_{1}\ot ,D_{2}),\{0,n_{2}+1\}) \mbox{ with } t=f_{D_{1}}(0)-f_{G}(0)+\nu \mbox{ and } \tau=\nu-1& \mbox{if } q_{\{a\}}(S)=D_{2},\\
			(\Gamma_{t,\tau}^{0}(D_{1}\ot D_{2}),\{0,n_{1}+1\}) \mbox{ with } t=\nu-1 \mbox{ and } \tau=f_{D_{2}}(0)-f_{G}(0)+\nu& \mbox{if } q_{\{a\}}(S)=D_{1}. \end{cases}
	\end{align*} 
\end{proof}

\begin{cor}
	We assume that $x\in\N$. Let $D_{1}$ and $D_{2}$ be two dissection diagrams of positive degree $n_{1}$ and $n_{2}$. We call $\sigma_{L(D_{1}D_{2})}$ the number of terms in $L(D_{1}D_{2})$ counted with multiplicity .
	\begin{align*}
		\sigma_{L(D_{1}D_{2})}=&(f_{D_{2}}(0)+1)\left[x(3n_{1} + 1) + (1-x)\sum\limits_{i=1}^{n_{1}}l_{1}(i)+(1-x)(f_{D_{1}}(0)+1)\right]\\
		+&(f_{D_{1}}(0)+1)\left[x(3n_{2} + 1) + (1-x)\sum\limits_{j=1}^{n_{2}}l_{2}(j)+(1-x)(f_{D_{2}}(0)+1)\right]
	\end{align*}
	where, for each $(i,j)\in\llbracket1,n_{1} \rrbracket\times\llbracket1,n_{2} \rrbracket$, $(l_{1}(i),l_{2}(j))$ is the unique element in  $\llbracket0,f_{D_{1}}(i)\rrbracket\times\llbracket0,f_{D_{2}}(j)\rrbracket$ such that the chord $a_{D_{1},l_{1}(i)}^{i}$ of $D_{1}$ is labeled by $i$ and the chord $a_{D_{2},l_{2}(j)}^{j}$ of $D_{2}$ is labeled by $j$.
\end{cor}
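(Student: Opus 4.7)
The proof is a direct bookkeeping calculation on top of Proposition \ref{valLdeuxdiagrammes}. The plan is to count the number of summands in each of the five sums appearing in the explicit formula for $L(D_{1}D_{2})$, then add them up, using the handshake identity for fertilities to rewrite the sums $\sum (f_{D_{k}}(i) - l_{k}(i) + 1)$ that come with the weight $x$.

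More concretely, I would proceed as follows. The first sum (weight $1$) runs over $t \in \llbracket 0, f_{D_{1}}(0)\rrbracket$, $j \in \llbracket 1, n_{2}\rrbracket$, $\tau \in \llbracket 0, l_{2}(j)-1\rrbracket$, contributing $(f_{D_{1}}(0)+1)\sum_{j=1}^{n_{2}} l_{2}(j)$ terms. The second sum (weight $x$) runs over the complementary range $\tau \in \llbracket l_{2}(j), f_{D_{2}}(j)\rrbracket$, contributing
\[
x\,(f_{D_{1}}(0)+1)\sum_{j=1}^{n_{2}}\bigl(f_{D_{2}}(j) - l_{2}(j) + 1\bigr).
\]
Here I would invoke the identity $\sum_{j=1}^{n_{2}} f_{D_{2}}(j) = 2n_{2} - f_{D_{2}}(0)$ (each chord contributes $2$ to the total fertility, and $f_{D_{2}}(0)$ of these contributions are at the root) to rewrite the inner sum as $3n_{2} - f_{D_{2}}(0) - \sum_{j} l_{2}(j)$. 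The third and fourth sums are the mirror images with the roles of $D_{1}$ and $D_{2}$ exchanged, so they contribute $(f_{D_{2}}(0)+1)\sum_{i} l_{1}(i)$ and $x\,(f_{D_{2}}(0)+1)\bigl(3n_{1} - f_{D_{1}}(0) - \sum_{i} l_{1}(i)\bigr)$ respectively. The fifth sum has $\lambda \in \{0,1\}$, $t \in \llbracket 0, f_{D_{1}}(0)\rrbracket$, $\tau \in \llbracket 0, f_{D_{2}}(0)\rrbracket$ and contributes $2(f_{D_{1}}(0)+1)(f_{D_{2}}(0)+1)$ terms.

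Collecting the five contributions and factoring, the $D_{2}$-part groups as
\[
(f_{D_{1}}(0)+1)\Bigl[(1-x)\sum_{j=1}^{n_{2}} l_{2}(j) + x\bigl(3n_{2} - f_{D_{2}}(0)\bigr)\Bigr],
\]
and symmetrically for the $D_{1}$-part. To get the form stated in the corollary one then absorbs the fifth sum $2(f_{D_{1}}(0)+1)(f_{D_{2}}(0)+1)$ by writing $2 = (1-x) + (1-x) + 2x$ and splitting symmetrically between the two packets; this lets the brackets reassemble as $x(3n_{k}+1) + (1-x)\sum l_{k} + (1-x)(f_{D_{k}}(0)+1)$. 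The bookkeeping check that the leftover $x$-terms $-x\bigl(2f_{D_{1}}(0)f_{D_{2}}(0) + f_{D_{1}}(0) + f_{D_{2}}(0)\bigr)$ produced on both sides agree is a short direct expansion.

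The whole argument is routine; the only mild obstacle is keeping track of which range each sum comes from and applying the handshake identity correctly. There is no combinatorial subtlety since Proposition \ref{valLdeuxdiagrammes} has already established that the five families of operations produce pairwise distinct terms (by Proposition \ref{insertionunb}), so every term is counted exactly according to its explicit coefficient $1$ or $x$ in the formula for $L(D_{1}D_{2})$.
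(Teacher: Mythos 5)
Your computation is correct and is precisely the ``direct computation'' that the paper leaves implicit (this corollary is stated without a written proof, just like its one-diagram analogue): one enumerates the five index ranges in Proposition \ref{valLdeuxdiagrammes}, applies the handshake identity $\sum_{i=1}^{n}f_{D}(i)=2n-f_{D}(0)$, and regroups. I have checked independently that your two bracketed packets together with the $2(f_{D_{1}}(0)+1)(f_{D_{2}}(0)+1)$ terms coming from operation 4 reassemble exactly into the stated formula, so the proposal matches the paper's intended argument.
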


\begin{prop}\label{etapedeux}
	Let $x$ be a scalar. Let $D$, $D_{1}$, $D_{2}$, $G_{1}$ and $G_{2}$ be five nonempty dissection diagrams. Then,
	\begin{enumerate}
		\item $L(D)\neq L(G_{1}G_{2})$,
		\item $L(D_{1}D_{2})= L(G_{1}G_{2})\iff \left((D_{1},D_{2})=(G_{1},G_{2}) \text{ or } (D_{1},D_{2})=(G_{2},G_{1})\right)$.
	\end{enumerate} 
\end{prop}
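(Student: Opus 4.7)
The plan is to adapt the root-fertility descent used in Proposition~\ref{etapeun} to the setting of ``one or two diagrams''. A first degree check disposes of the cases $\deg(D)\neq\deg(G_{1})+\deg(G_{2})$ and $\deg(D_{1})+\deg(D_{2})\neq\deg(G_{1})+\deg(G_{2})$: the two sides then lie in distinct graded components of $\D^{+}$ and so differ trivially. In the rest of the proof I assume the degrees match.

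The key structural observation, read off from Propositions~\ref{valLundiagramme} and \ref{valLdeuxdiagrammes}, is the following dichotomy. For any diagram $G$, the coefficient of $G$ in $L(D)$ is a $\K$-linear sum, weighted by $x^{k_{\{a\}}(G)}$, indexed by those chords $a$ of $G$ which connect two \emph{adjacent} vertices of the underlying polygon --- equivalently, chords for which $q_{\{a\}}(G)$ is a single diagram, required to equal $D$. In contrast, the coefficient of $G$ in $L(G_{1}G_{2})$ is indexed by chords of $G$ connecting \emph{non-adjacent} vertices --- the separating chords, for which $q_{\{a\}}(G)$ is a two-component forest, required to equal $G_{1}G_{2}$. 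These two families of chords of $G$ are disjoint, which is the source of the dichotomy exploited below.

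For Part~(1), I first handle the case where $D$ has no separating chord (equivalently, $D$ is one of the finitely many ``all-adjacent'' diagrams of its degree) by projecting onto the subspace spanned by diagrams having no separating chord: this projection annihilates $L(G_{1}G_{2})$, because every diagram in its support carries the bridge chord which is separating, but is nonzero on $L(D)$, since Operation~2 with well-chosen parameters produces an all-adjacent $G$ from an all-adjacent $D$. When $D$ itself carries at least one separating chord, the previous projection vanishes on both sides, and I compare maximal root fertilities instead: by Operation~2, $L(D)$ contains diagrams with $f_{G}(0)=f_{D}(0)+1$, while the maximum for $L(G_{1}G_{2})$ is bounded in terms of $f_{G_{1}}(0)$ and $f_{G_{2}}(0)$. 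When these maxima coincide, the diagrams realising them have a very specific structure near the root, and the canonical decomposition around the first root-chord (as in Proposition~\ref{etapeun}) reduces to a strictly smaller problem already covered by that proposition.

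For Part~(2), the ``if'' direction is immediate from the commutativity $D_{1}D_{2}=D_{2}D_{1}$ in the symmetric algebra. For the converse, assume $L(D_{1}D_{2})=L(G_{1}G_{2})$. I will exhibit a diagram $G$ in the common support admitting a \emph{unique} separating chord $a$; the coefficient equation at $G$ then forces $q_{\{a\}}(G)=D_{1}D_{2}=G_{1}G_{2}$ as unordered pairs, giving $\{D_{1},D_{2}\}=\{G_{1},G_{2\}}$. Such a $G$ is constructed by the bridging construction when both $D_{1}$ and $D_{2}$ are all-adjacent (the bridge is then the unique separating chord), and for arbitrary $D_{1},D_{2}$ the same root-fertility descent as in Part~(1) reduces to this favourable case. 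The main obstacle throughout will be this descent when $D$ (resp. $D_{1},D_{2}$) already carry many separating chords, since the dichotomy no longer separates supports cleanly: one must then track coefficients explicitly using the injectivity results of Propositions~\ref{insertionuna} and~\ref{insertionunb}.
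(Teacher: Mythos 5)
Your opening reductions are fine, and the dichotomy you isolate --- the coefficient of $G$ in $L(D)$ is carried by the chords of $G$ joining \emph{adjacent} vertices of the polygon, while the coefficient of $G$ in $L(G_{1}G_{2})$ is carried by its \emph{separating} chords --- is a correct and useful reading of Propositions \ref{valLundiagramme} and \ref{valLdeuxdiagrammes}, and a genuinely different organizing principle from the paper's, which filters by root fertility and then works with the canonical decomposition of a diagram into the three sub-diagrams around the first root chord. But the plan has two gaps that are not cosmetic. First, each time you assert that a projection of $L(D)$ is nonzero you are implicitly claiming that some coefficient $\sum_{a}x^{k_{\{a\}}(G)}$ does not vanish; these are polynomials in $x$ with nonnegative integer coefficients, and they \emph{can} vanish at particular scalars (already $L$ applied to the degree-$1$ diagram produces the coefficient $1+x$, which dies at $x=-1$), whereas the proposition is asserted for every $x\in\K$. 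To make such steps work you must exhibit a diagram reached by a single insertion, or by insertions all sharing the same value of $k$ --- which is in effect what the paper does by pinning down the explicit diagrams $T_{i}$, $S_{i}$, $P_{i}$, $H_{i}$, $K_{i}$, $J_{i}$ in the two supports.

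Second, and more seriously, the whole weight of both parts rests on the ``descent'' from the general case to the all-adjacent case, and that descent is never defined: you do not say what quantity decreases nor how the conclusion for the reduced data implies it for the original one. The one concrete reduction you name --- that after matching maximal root fertilities the residual problem is ``already covered by Proposition \ref{etapeun}'' --- cannot be right as stated, because Proposition \ref{etapeun} compares $L$ of two \emph{single} diagrams, while the residual problem is still of mixed type, $L(D)$ against $L(G_{1}G_{2})$; that is precisely the statement being proved, so the reduction is circular unless recast as an induction on degree whose inductive step is actually carried out. Likewise, in Part (2) the bridge diagram has a \emph{unique} separating chord only if no chord of $D_{1}$ or $D_{2}$ comes to straddle the opening after relabelling --- a condition on the choice of $t$, $\tau$, $\lambda$ you do not verify --- and the passage from all-adjacent $D_{1},D_{2}$ to arbitrary ones is again the undone descent, which you yourself flag as ``the main obstacle.'' The paper closes these points by a direct, if laborious, case analysis on the root fertilities ($=1$ or $\geq 2$) of the five diagrams, matching explicit elements of the two supports; your approach would need comparable case work before it constitutes a proof.
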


\ud{0.7}
\begin{proof}
	Let $x$ be a scalar in $\K$. Let $D$, $D_{1}$, $D_{2}$, $G_{1}$ and $G_{2}$ five dissection diagrams. We consider their root fertility.  As in the proof of proposition \ref{etapeun}, if $f_{D}(0)\neq f_{G_{1}}(0)+f_{G_{2}}(0)$ and if  $f_{D_{1}}(0)+f_{D_{2}}(0)\neq f_{G_{1}}(0)+f_{G_{2}}(0)$ then $L(D)\neq L(G_{1}G_{2})$ and $L(D_{1}D_{2})\neq L(G_{1}G_{2})$. We consider the case where  $f_{D}(0)=f_{G_{1}}(0)+f_{G_{2}}(0)$ and $f_{D_{1}}(0)+f_{D_{2}}(0)=f_{G_{1}}(0)+f_{G_{2}}(0)$. We write the dissection diagrams as  $D=\defDiag{{6/0}}{{2/6/A,10/6/B,9/0/C}}$, $D_{1}=\defDiag{{6/0}}{{2/6/A_{1},10/6/B_{1},9/0/C_{1}}}$, $D_{2}=\defDiag{{6/0}}{{2/6/A_{2},10/6/B_{2},9/0/C_{2}}}$ $G_{1}=\defDiag{{6/0}}{{2/6/\alpha_{1},10/6/\beta_{1},9/0/\gamma_{1}}}$ and $G_{2}=\defDiag{{6/0}}{{2/6/\alpha_{2},10/6/\beta_{2},9/0/\gamma_{2}}}$. We will use $T_{1}=\defDiag{{4/0}}{{9/0/D}}$, $T_{2}=\defDiag{{9/0}}{{4/0/D}}$, $S_{1}=\defDiag{{13/0}}{{4/0/D_{1},9/0/D_{2}}}$, $S_{2}=\defDiag{{13/0}}{{4/0/D_{2},9/0/D_{1}}}$, $P_{1}=\defDiag{{13/0}}{{4/0/G_{1},9/0/G_{2}}}$, $P_{2}=\defDiag{{13/0}}{{4/0/G_{2},9/0/G_{1}}}$, $H_{1}=\defDiag{{13/0}}{{3/13/D_{1},10/13/D_{2}}}$, $H_{2}=\defDiag{{13/0}}{{3/13/D_{2},10/13/D_{1}}}$, $K_{1}=\defDiag{{13/0}}{{3/13/G_{1},10/13/G_{2}}}$, $K_{2}=\defDiag{{13/0}}{{3/13/G_{2},10/13/G_{1}}}$. We recall that $X_{n}$ is the corolla of degree $n$.
	\begin{enumerate}
		\item We introduce $J_{1}=\defDiag{{10/0,3/10}}{{0/3/A,13/3/B,13/10/C}}$ and $J_{2}=\defDiag{{3/0,8/3}}{{5/8/A,11/8/B,11/3/C}}$. We assume $L(D)$ and $L(G_{1}G_{2})$ equal so $f_{D}(0)\geq2$. There are two cases:
		\begin{enumerate}
			\item If $f_{G_{1}}(0)$ and $f_{G_{2}}(0)$ are both different from $1$, then there exists an integer $i\in\{1,2\}$ such that $J_{1}=K_{i}$. It is impossible.
			\item We assume now there exists at least an integer $i\in\{1,2\}$ such that $f_{G_{i}}(0)=1$. Without any lack of generality, we can consider $i=1$ so $\gamma_{1}$ is empty. We use know the fact that $\{T_{1},T_{2}\}$ and $\{P_{1},P_{2}\}$ be equal. We have two cases.
			\begin{enumerate}
				\item In the first one, $(T_{1},T_{2})$ equals $(P_{1},P_{2})$ so $\alpha_{1}$ and $\beta_{1}$ are empty, $G_{1}=X_{1}$, $A=\alpha_{2}$, $B=\beta_{2}$ and we have more 
				\[\defDiag{{5/0,11/0}}{{2/5/A,8/5/B,9/0/\gamma_{2}}}=D=\defDiag{{13/0,3/0}}{{3/13/A,10/13/B,10/0/\gamma_{2}}}.\]
				Thereafter $A$ and $B$ are empty and we have the equality \[\defDiag{{9/0}}{{4/0/\gamma_{2}}}=\defDiag{{4/0}}{{9/0/\gamma_{2}}}.\]
				So, there exists a non negative integer $n$ such that $\gamma_{2}=X_{n}$ and then $D=X_{n+2}$, $G_{1}=X_{1}$ and $X_{n+1}$. It is impossible.
				\item In the second one, $(T_{1},T_{2})$ equals $(P_{2},P_{1})$ so $\alpha_{2}$ and $\beta_{2}$ are empty, $A=\alpha_{1}$, $B=\beta_{1}$ and we have two equalities 
				\[D=\defDiag{{5/0,8/0}}{{5/8/A,11/8/B,3/0/\gamma_{2}}} \text{ and } \defDiag{{9/0}}{{4/0/C}}=\defDiag{{13/0,3/0}}{{10/0/\gamma_{2}}}.\]
				So, we obtain \[\defDiag{{5/0,11/0,8/0}}{{3/0/\gamma_{2},5/8/A,11/8/B}}=T_{2}=\defDiag{{5/0,7/0,8/0}}{{2/5/A,8/5/B,10/0/\gamma_{2}}}.\] If $\gamma_{2}$ is empty then $A$ and $B$ are empty too and we obtain $D=X_{2}$ and $G_{1}=G_{2}=X_{1}$. It is impossible. The diagram $\gamma_{2}$ is not empty.  As for all integers $i$ and $j$ in $\{1,2\}$, the dissection diagrams $J_{i}$ and $K_{j}$ are differents, there exists two nonempty dissection diagrams $W_{1}$ and $W_{2}$ such that $J_{1}=\defDiag{{13/0}}{{3/13/W_{1},10/13/B}}$ and $J_{2}=\defDiag{{13/0}}{{3/13/A,10/13/W_{2}}}$. Then $A$ and $B$ are empty and we obtain that $D$, $G_{1}$ and $G_{2}$ are corollas. It is impossible.
			\end{enumerate}
		\end{enumerate}	
		Thus, $L(D)$ and $L(G_{1}G_{2})$ are different.
		
		\item We assume $L(D_{1}D_{2})$ and $L(G_{1}G_{2})$ equal. We have to consider three cases.
		\begin{enumerate}
			\item In the first case, $f_{D_{1}}(0)$, $f_{D_{2}}(0)$, $f_{G_{1}}(0)$ and $f_{G_{2}}(0)$ are all equal to $1$. As the two sets  $\{S_{1},S_{2}\}$ and $\{P_{1},P_{2}\}$ are equal the result is trivial.
			\item In the second one, $f_{D_{1}}(0)$, $f_{D_{2}}(0)$, $f_{G_{1}}(0)$ and $f_{G_{2}}(0)$ are all positive integers different from $1$. As the two sets  $\{H_{1},H_{2}\}$ and $\{K_{1},K_{2}\}$ are equal the result is trivial.
			\item In the third one, $f_{D_{1}}(0)=1$ and $f_{D_{2}}(0)\neq1$. The sets $\{S_{1},S_{2}\}$ and $\{P_{1},P_{2}\}$ are equal so we assume without any lack of generality $S_{1}=P_{1}$ and $S_{2}=P_{2}$. For all $i\in\{1,2\}$, we have $A_{i}=\alpha_{i}$ and $B_{i}=\beta_{i}$. 
			
			We start by proving there exists an unique integer $i\in\{1,2\}$ such that $f_{G_{i}}(0)=1$. We just have to prove the existence.
			We assume $f_{G_{1}}(0)$ and $f_{G_{2}}(0)$ different from $1$. We consider $\Omega_{1}$ (respectively $\Omega_{2}$) the set of dissection diagrams of fertility $f_{D_{1}}(0)+f_{D_{2}}(0)$ in the root built to obtain $L(D_{1}D_{2})$ (respectively  $L(G_{1}G_{2})$). By writing $\gamma_{2}=\defDiag{{}}{{4/0/\tilde{\gamma_{2}},9/0/\overline{\gamma_{2}}}}$ with $\overline{\gamma_{2}}$ of fertility $1$ in the root we have two different possiblities; either
			\ud{0.5}
			$\Omega_{1,1}=\InsDeuxDiag{{3/8,8/0,11/0}}{{0/3/A_{1},6/3/B_{1},8/11/A_{2},14/11/B_{2},13/0/C_{2}}}$ equals
			$\Omega_{2,1}=\InsDeuxDiag{{5/0,7/0,12/7}}{{2/5/\alpha_{2},8/5/\beta_{2},6/0/\gamma_{2},9/12/\alpha_{1},0/12/\beta_{1},12/0/\gamma_{1}}}$ either  $\Omega_{2,2}=\InsDeuxDiag{{5/0,8/0,11/0}}{{2/5/\alpha_{2},8/5/\beta_{2},7/0/\tilde{\gamma_{2}},5/8/\overline{\gamma_{2}},8/11/\alpha_{1},14/11/\beta_{1},13/0/\gamma_{1}}}$. \ud{0.7} So, $B_{2}=\beta_{2}$ is empty and $A_{2}(=\alpha_{2})$ and $D_{1}$ have the same degree. We consider $K_{1}$ and $K_{2}$. If there exists $i\in\{1,2\}$ such that $K_{1}=H_{i}$ or $K_{2}=H_{i}$ the one of the two couples $(G_{1},G_{2})$ and $(G_{2},G_{1})$ equals $(D_{i},D_{-i+3})$. Is is not consistant. Thus there exists a nonemty dissection diagram $W$ such that $K_{1}=\defDiag{{13/0}}{{3/13/W,10/13/B_{1}}}$ and we obtain \[\deg(D_{1})>\deg(B_{1})=\deg(G_{2})>\deg(\alpha_{2})+1>\deg(D_{1})\]  which is impossible.
			
			Now we know that there exists an unique integer $i\in\{1,2\}$ such that $f_{G_{1}}(0)=1$. If $i=1$ then the result is trivial. If $i=2$, then we have  	
			\ud{0.5}
			$\Omega_{1,1}=\InsDeuxDiag{{3/8,8/0,11/0}}{{0/3/A_{1},6/3/B_{1},8/11/A_{2},14/11/B_{2},13/0/C_{2}}}$ equals 
			$\Omega_{2,1}=\InsDeuxDiag{{5/0,7/0,12/7}}{{2/5/\alpha_{2},8/5/\beta_{2},9/12/\alpha_{1},0/12/\beta_{1},12/0/\gamma_{1}}}$ or  $\Omega_{2,2}=\InsDeuxDiag{{3/8,8/0,11/0}}{{0/3/\alpha_{2},6/3/\beta_{2},8/11/\alpha_{1},14/11/\beta_{1},13/0/\gamma_{1}}}$.
			Furthermore $f_{\gamma_{1}}(0)=f_{C_{2}}(0)$ so $\gamma_{1}$ and $C_{2}$ are equal. If $\Omega_{1,1}=\Omega_{2,2}$ then $A_{1}=A_{2}=\alpha_{1}=\alpha_{2}$ and $B_{1}=B_{2}=\beta_{1}=\beta_{2}$ so $(D_{1},D_{2})=(G_{2},G_{1})$. If $\Omega_{1,1}\neq\Omega_{2,2}$ then $\Omega_{1,1}=\Omega_{2,1}$ and $\Omega_{2,2}=\InsDeuxDiag{{5/0,7/0,12/7}}{{2/5/A_{1},8/5/B_{1},9/12/A_{2},0/12/B_{2},12/0/C_{2}}}$ which is impossible.
			
			We have proved the statement \[L(D_{1}D_{2})= L(G_{1}G_{2})\iff \left((D_{1},D_{2})=(G_{1},G_{2}) \text{ or } (D_{1},D_{2})=(G_{2},G_{1})\right).\]
		\end{enumerate}
	\end{enumerate}
\end{proof}
\ud{0.7}

\begin{prop}\label{triangulaire}
	Let $n$ be a positive integer. There exists a basis $\mathcal{B}_{1}$ of sub-binary forests and a basis $\mathcal{B}_{2}$ of the dissection diagrams algebra such that the matrix in those basis of the restriction of $\varphi$ on homogeneous elements of degree $n$ is triangular by blocks.
\end{prop}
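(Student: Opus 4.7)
The plan is to exploit the most natural invariant respected by $\varphi$, namely the number of factors. A sub-binary forest $F=t_1\cdots t_k$ has exactly $k$ tree-factors, and I will show that $\varphi(F)$ is a linear combination of monomials in $\sym{\D^+}$ having exactly $k$ diagram-factors. This yields a block-diagonal (hence block-triangular) matrix of $\varphi$ at each homogeneous degree.

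Concretely, I would take $\mathcal{B}_1$ to be the set of all sub-binary forests of degree $n$, and $\mathcal{B}_2$ to be the set of commutative monomials $D_1^{a_1}\cdots D_r^{a_r}$ in pairwise distinct dissection diagrams with $\sum_i a_i\deg(D_i)=n$. Both sets admit the decomposition
\[\mathcal{B}_1 = \bigsqcup_{k=1}^{n}\mathcal{B}_1^{(k)}, \qquad \mathcal{B}_2 = \bigsqcup_{k=1}^{n}\mathcal{B}_2^{(k)},\]
where $\mathcal{B}_1^{(k)}$ consists of the sub-binary forests having exactly $k$ trees and $\mathcal{B}_2^{(k)}$ consists of the monomials with $\sum_i a_i=k$. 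Ordering both bases so that the $k$-th blocks occupy the same range of row/column indices, the statement reduces to the inclusion $\varphi(\mathcal{B}_1^{(k)})\subseteq\mathrm{span}(\mathcal{B}_2^{(k)})$ for every $k$.

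The core step is the following claim, which I would prove by induction on $\deg(t)$: for every non-empty sub-binary tree $t$, the element $\varphi(t)$ lies in $\D^+$, i.e.\ is a linear combination of single dissection diagrams (no products). The base $t=\tun$ is immediate from the definition. For the inductive step, write $t=B(t_1)$ or $t=B(t_1t_2)$; then $\varphi(t)$ equals $L(\varphi(t_1))$ or $L(\varphi(t_1)\varphi(t_2))$, and the explicit formulas of propositions \ref{valLundiagramme} and \ref{valLdeuxdiagrammes} show that $L$ takes values in $\D^+$. Granted the claim, for $F=t_1\cdots t_k\in\mathcal{B}_1^{(k)}$ one gets
\[\varphi(F)=\varphi(t_1)\cdots\varphi(t_k)\in\underbrace{\D^+\cdots\D^+}_{k\text{ factors}},\]
which is precisely the subspace of $\sym{\D^+}$ spanned by $\mathcal{B}_2^{(k)}$.

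There is no genuine obstacle here: the whole argument is a bookkeeping verification that the recursive definition of $\varphi$ preserves the ``number of factors'' grading. The only point that requires care is ordering the two bases in a compatible way, which is routine; within each block the matrix need not be further triangular in an obvious manner, but block-diagonality (a fortiori block-triangularity) is all the statement asks for.
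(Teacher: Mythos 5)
Your proof is correct, but it establishes a different (and coarser) block structure than the one the paper uses. Your decomposition is by the number of factors: a $k$-tree forest is sent to a combination of length-$k$ monomials, because $\varphi$ is multiplicative on forests and $L$ takes values in $\D^{+}$ (single diagrams, by its very definition and by propositions \ref{valLundiagramme} and \ref{valLdeuxdiagrammes}), so the matrix is block-diagonal for the factor-count grading. This is a valid reading of ``triangular by blocks'' and your verification is sound. The paper's proof works instead \emph{inside} the largest of your blocks (trees versus single diagrams): it introduces $m(t)$, the number of vertices of the sub-binary tree $t$ having two children, and $m(D)=\max\{\mu(q_{C}(D)),\ C\subset\Ch(D)\}$, where $\mu$ counts the factors of a disjoint union, and observes that $Z_{D}(\varphi(t))=0$ whenever $m(D)<m(t)$. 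Ordering trees by $m(t)$ and diagrams by $m(D)$ then yields a genuinely triangular (not merely diagonal) block structure that separates trees according to their branching --- which is the kind of leverage one wants for attacking conjecture \ref{conjnoyau} --- whereas your factor-count grading gives no information within the tree block. Conversely, the paper's argument as written only treats trees and single diagrams, so your observation that $\varphi$ preserves the number of factors is exactly the complementary ingredient needed to extend that filtration to all sub-binary forests.
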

\begin{proof}
	Let $n$ be a positive integer and $t$ a sub-binary tree of degree $n$. We call $m(t)$ the number of vertices of $t$ with two children. For each disjoint union of dissection diagramme $U=D_{1}\dots D_{k}$ the integer $k$ is called length of $U$ and is denoted by $\mu(U)$. Let $D$ be a dissection diagram of degree $n$. We call $m(D)=\max\{\mu(q_{C}(D)),~C\in\Ch(D)\}$. By definition of $L$, if $m(D)<m(t)$ then $Z_{D}(\varphi(t))=0$.
\end{proof}

\begin{cor}
	Let $n$ be a positive integer. We recall that $e_{n}\in\TR$ is the ladder of degree $n$, $Y_{n}\in\D$ is the path tree of degree $n$ and $X_{n}\in\D$ is the corolla of degree $n$. Let $t$ a sub-binary tree. We have $Z_{Y_{n}}(\varphi(t))=\begin{cases} n! \mbox{ if } t=e_{n},\\0  \mbox{ else} \end{cases}$ and $Z_{X_{n}}(\varphi(t))\neq0$. Actually $Z_{X_{n}}(\varphi(t))=2^{\interne(t)}$ where $\interne(t)$ is the number of internal vertices \emph{ i.e.} the number of vertices with at least one child.
\end{cor}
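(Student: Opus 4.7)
The plan is to prove both equalities by induction on the degree $n$ of $t$, exploiting the recursive definition of $\varphi$ together with propositions \ref{valLundiagramme} and \ref{valLdeuxdiagrammes}. The base case $n=1$ is immediate, since $\tun = e_1$ is the unique sub-binary tree of degree $1$, has no internal vertex, and $\varphi(\tun) = Y_1 = X_1$. For the inductive step, I write $t = B(F)$ with $F$ a sub-binary forest of degree $n-1$; sub-binarity forces $F = t_1$ or $F = t_1 t_2$. Since each $\varphi(t_i) \in \g{\D}$ is a linear combination of single dissection diagrams, $\varphi(F)$ is a linear combination of monomials of length one or two in $\sym(\g{\D})$, and $\varphi(t) = L(\varphi(F))$; extracting the coefficients of $Y_n$ and $X_n$ in $\varphi(t)$ thus reduces to computing $Z_{Y_n}(L(D))$, $Z_{Y_n}(L(D_1 D_2))$, $Z_{X_n}(L(D))$ and $Z_{X_n}(L(D_1 D_2))$ for dissection diagrams $D, D_1, D_2$.

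For the path tree identity, the key structural fact is that every chord of $Y_n$ joins two adjacent vertices of $\Pi_n$, so contracting any chord of $Y_n$ yields the single diagram $Y_{n-1}$, never a product. By propositions \ref{valLundiagramme} and \ref{valLdeuxdiagrammes}, a diagram $G$ appears in $L(U)$ only if $U = q_{\{a\}}(G)$ for some chord $a$ of $G$, so $Z_{Y_n}(L(D_1 D_2)) = 0$ whenever $D_1, D_2$ are both nonempty, and $Z_{Y_n}(L(D)) = 0$ unless $D = Y_{n-1}$. In that last case the $n$ chords of $Y_n$ arise from $n$ distinct insertions into $Y_{n-1}$ (distinctness by proposition \ref{insertionuna}), each carrying trivial $x$-exponent because the labels $l(i)$ of $Y_{n-1}$ place the inserted chord in the unweighted part of the explicit formula of proposition \ref{valLundiagramme}; this yields $Z_{Y_n}(L(Y_{n-1})) = n$. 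Combined with the inductive hypothesis, $Z_{Y_n}(\varphi(t)) = n\cdot(n-1)! = n!$ when $t = B(e_{n-1}) = e_n$, and vanishes otherwise.

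For the corolla identity, contracting the chord $\{0,i\}$ of $X_n$ returns $X_{n-1}$ for $i \in \{1, n\}$ (adjacent chords) and the product $X_{i-1} X_{n-i}$ for $i \in \{2, \ldots, n-1\}$. Hence $Z_{X_n}(L(D))$ is nonzero only for $D = X_{n-1}$, with coefficient $2$ coming from the two root-insertions (operation $2$ of proposition \ref{valLundiagramme}, which carries no $x$-factor); and $Z_{X_n}(L(D_1 D_2))$ is nonzero only for $\{D_1, D_2\} = \{X_{n_1}, X_{n_2}\}$ with $n_1 + n_2 = n-1$, $n_1, n_2 \geq 1$, with coefficient $2$ obtained by enumerating operation $4$ of proposition \ref{valLdeuxdiagrammes} together with the symmetry factor in the Oudom--Guin definition of $L$ when $D_1 = D_2$. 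Using the identities $\interne(B(t_1)) = \interne(t_1) + 1$ and $\interne(B(t_1 t_2)) = \interne(t_1) + \interne(t_2) + 1$, the induction closes in both subcases:
\[
Z_{X_n}(\varphi(B(t_1))) = 2 \cdot 2^{\interne(t_1)} = 2^{\interne(t)}, \qquad Z_{X_n}(\varphi(B(t_1 t_2))) = 2 \cdot 2^{\interne(t_1)} \cdot 2^{\interne(t_2)} = 2^{\interne(t)}.
\]

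The hardest part of the argument will be the coefficient bookkeeping at the level of individual insertions: verifying that the $x$-exponents $k_{\{a_i\}}$ appearing in propositions \ref{valLundiagramme} and \ref{valLdeuxdiagrammes} vanish for every insertion producing $Y_n$ or $X_n$, and that the symmetry factor in the Oudom--Guin formula for $L(D_1 D_2)$ when $D_1 = D_2$ gives multiplicity exactly $2$ in the two-diagram case. Both checks reduce to a direct case analysis on the labels $l(i)$, which is tractable because $Y_n$ (a path) and $X_n$ (a corolla) have especially simple chord structures.
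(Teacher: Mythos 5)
The paper states this corollary with no proof at all, so there is no argument of the author's to measure yours against; judged on its own terms, your proposal is correct. The skeleton is right: induction on $t=B(F)$ with $F$ a sub-binary forest of one or two trees, reduction (via linearity and homogeneity of $\varphi$) to the four coefficients $Z_{Y_{n}}(L(D))$, $Z_{Y_{n}}(L(D_{1}D_{2}))$, $Z_{X_{n}}(L(D))$, $Z_{X_{n}}(L(D_{1}D_{2}))$, and the observation that a diagram $G$ occurs in $L(U)$ only if $U=q_{\{a\}}(G)$ for some chord $a$ of $G$ --- which kills the two-diagram contributions for $Y_{n}$ (all chords of $Y_{n}$ join adjacent vertices) and forces $D=Y_{n-1}$, resp.\ $D=X_{n-1}$ or $\{D_{1},D_{2}\}=\{X_{n_{1}},X_{n_{2}}\}$, in the surviving terms. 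The only part you defer, the verification that every insertion producing $Y_{n}$ or $X_{n}$ carries weight $x^{0}$ and that the multiplicities are $n$ and $2$, does not in fact need the case analysis on the labels $l(i)$: it is already contained in the coproduct formulas of section 1.2. Indeed $Z_{Y_{n}}(L(Y_{n-1}))$ is the coefficient of $Y_{n-1}\ot Y_{1}$ in $\De(Y_{n})=\sum_{k}\binom{n}{k}Y_{k}\ot Y_{n-k}$, namely $n$, and $Z_{X_{n}}(L(X_{n-1}))=2$ and $Z_{X_{n}}(L(X_{n_{1}}X_{n_{2}}))=2$ are read off from the $k=1$ part $\sum_{i_{0}+i_{1}=n-1}X_{i_{0}}X_{i_{1}}\ot X_{1}$ of $\De(X_{n})$ (the ordered sum gives $2X_{n-1}$ and gives each unordered pair $\{X_{n_{1}},X_{n_{2}}\}$ multiplicity $2$ when $n_{1}\neq n_{2}$ and $1$ when $n_{1}=n_{2}$, the latter compensated exactly by the factor $2$ in the Oudom--Guin expression for $L(UU)$); no power of $x$ occurs in these coefficients. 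Your values are also consistent with the worked examples in the paper, $L(X_{1})=2Y_{2}+2X_{2}+(1+x)(\cdot)$ and $L(X_{1}X_{1})=2X_{3}+\dots$, and the recursion $\interne(B(t_{1}))=\interne(t_{1})+1$, $\interne(B(t_{1}t_{2}))=\interne(t_{1})+\interne(t_{2})+1$ closes the induction as you state.
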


\textbf{Acknowledgments.} I would like to thank the University of the Littoral Opal Coast and the région Hauts-de-France for their financial support. I would like to thank my supervisor for his support too.

\bibliographystyle{siam}
\bibliography{biblio_dissection_diagrams_mammez}
\end{document}